\newtheorem{Thm}{Theorem}{\bfseries}{\itshape}
\newtheorem*{Thm*}{Theorem}{\bfseries}{\itshape}
\newtheorem{Cor}{Corollary}{\bfseries}{\itshape}
\newtheorem{Prop}[Cor]{Proposition}{\bfseries}{\itshape}
\newtheorem{Lem}[Cor]{Lemma}{\bfseries}{\itshape}
\newtheorem*{Lem*}{Lemma}{\bfseries}{\itshape}
{\bfseries}{\itshape}
{\bfseries}{\itshape}
\newtheorem{Def}[Cor]{Definition}{\bfseries}{\rmfamily}
\newtheorem{Ex}[Cor]{Example}{\scshape}{\rmfamily}
\newtheorem{Rem}[Cor]{Remark}{\scshape}{\rmfamily}
{\bfseries}{\itshape}
\renewcommand\ge{\geqslant} \renewcommand\le{\leqslant}
\let\tildeaccent=\~ \let\hataccent=\^
\renewcommand\~[1]{\widetilde{#1}}
\def\<{\left<} \def\>{\right>} \def\({\left(} \def\){\right)}
\def\abs#1{\left\vert #1 \right\vert} \def\norm#1{\left\Vert #1
  \right\Vert} 
\let\parasymbol=\S \def\secref#1{\parasymbol\ref{#1}}
 \def\pd#1#2{\tfrac{\partial#1}{\partial#2}}
\let\polishL=l \def\Zoladek.{\.Zol\c adek}
 \def\const{\operatorname{const}}
 \def\Mat{\operatorname{Mat}}
 \def\Im{\operatorname{Im}}
 \def\dist{\operatorname{dist}}
\def\GL{\operatorname{GL}} \def\SL{\operatorname{SL}}
\def\etc.{\emph{etc}.}
 \def\Sing{\operatorname{Sing}}
\def\:{\colon} \def\R{{\mathbb R}} \def\C{{\mathbb C}} \def\Z{{\mathbb
    Z}} \def\N{{\mathbb N}} \def\Q{{\mathbb Q}} \def\P{{\mathbb P}}
\def\H{{\mathbb H}}
\def\A{{\mathbb A}}
\def\K{{\mathbb K}}
\let\PolishL=\L % remember polish L
\def\L{{\mathbb L}}
 \def\e{\varepsilon} \def\S{\varSigma}
 \def\diag{\operatorname{diag}}
\def\poly{\operatorname{poly}}
 \def\Lojas.{\PolishL ojasiewicz}
\def\cP{{\mathcal P}} \def\cR{{\mathcal R}}
\def\scS{{\mathscr S}} \def\scC{{\mathscr C}}
\def\scP{{\mathscr P}} \def\scD{{\mathscr D}}
\def\scM{{\mathscr M}} \def\scR{{\mathscr R}}
 \def\cL{{\mathcal L}} \def\cR{{\mathcal R}}
 \def\cD{{\mathcal D}}
 \def\mult{\operatorname{mult}}
\def\rest#1{{\vert_{#1}}}
\def\supp{\operatorname{supp}}
\def\Gal{\operatorname{Gal}}
\def\w{\omega}
\def\Qa{\Q^{\text{alg}}}
\def\O{\mathcal{O}}
\def\Ob{\overline{\O}}
\def\LT{\operatorname{LT}}
\def\height{\operatorname{ht}}
\def\fp{{\mathfrak p}}
\def\fq{{\mathfrak q}}
\begin{document}

% +Title
\title{Zero counting and invariant sets of differential equations}

\author{Gal Binyamini}\address{University of Toronto, Toronto, 
Canada}\email{galbin@gmail.com}

\begin{abstract}
  Consider a polynomial vector field $\xi$ in $\C^n$ with algebraic
  coefficients, and $K$ a compact piece of a trajectory. Let $N(K,d)$
  denote the maximal number of isolated intersections between $K$ and
  an algebraic hypersurface of degree $d$. We introduce a condition on
  $\xi$ called \emph{constructible orbits} and show that under this
  condition $N(K,d)$ grows polynomially with $d$. 

  We establish the constructible orbits condition for linear
  differential equations over $\C(t)$, for planar polynomial
  differential equations and for some differential equations related
  to the automorphic $j$-function.

  As an application of the main result we prove a polylogarithmic
  upper bound for the number of rational points of a given height in
  planar projections of $K$ following works of Bombieri-Pila and
  Masser.
\end{abstract}
%% -Title
\maketitle
\date{\today}

\section{Introduction}
\label{sec:intro}

Let $\xi$ be a polynomial vector field in $\C^n$ defined over an
algebraically closed field $\K\subset\C$,
\begin{equation} \label{eq:main-vf}
  \xi = \sum_{i=1}^n \xi_i(x) \pd{}{x_i}, \qquad \xi_i\in\K[x_1,\ldots,x_n].
\end{equation}
We denote by $\Sing\xi$ the singular locus of $\xi$, i.e. the set of
common zeros of $\xi_i$. For $p\in\C^n$, we define the \emph{orbit} of
$p$, denoted $\O_p$, to be the leaf of the (singular) complex
foliation determined by $\xi$. Equivalently $\O_p$ is the minimal
$\xi$-invariant set containing $p$. We remark that if $p\in\Sing\xi$
then $\O_p=\{p\}$. A subset of $\C^n$ is said to be $\xi$-invariant if
it is a union of orbits. We define the \emph{orbit closure} at $p$,
denoted $\Ob_p$, to be the Zariski closure of $\O_p$. Equivalently,
$\Ob_p$ is the minimal Zariski closed set containing $\O_p$.

\begin{Def}\label{def:const-orbits}
  Let $V\subset\C^n$ be a $\xi$-invariant variety defined over $\K$.
  We say that $\xi$ admits (or has) \emph{constructible orbits} in $V$ if the
  relation
  \begin{equation}\label{eq:orbit-rel}
    E\subset V\times V, \qquad E:=\{(p,q)\in V\times V : q\in\Ob_p\}
  \end{equation}
  is $\K$-constructible\footnote{See
    Remark~\ref{rem:const-orbits-field} concerning constructibility
    over $\K$ vs. $\C$.} (below \emph{constructible set} always means
  over $\K$ unless otherwise stated).
\end{Def}

The notion of differential equations with constructible orbits is
motivated by Nesterenko's work on E-functions and in particular by the
paper \cite{nesterenko:galois}. For more background
see~\secref{sec:linear}. In~\secref{sec:examples} we establish the
constructible orbits condition for systems of linear differential
equations over $\C(t)$; for planar polynomial differential equations;
and for some differential equations related to the automorphic
$j$-function.

We now turn to the description of our main result. Let $D_r$ (resp.
$\bar D_r$) denote the open (resp. closed) disc of radius $r$ centered
around the origin in $\C$. If $r$ is omitted $r=1$ is assumed. When
we speak of holomorphic functions on non-open set we always mean that
the function is holomorphic in a neighborhood of the set.
\begin{Def}
  A holomorphic map $\phi:\bar D\to V$ is said to be a parametrized
  (singular) trajectory of $\xi$ if $\phi(\bar D)\not\subset\Sing\xi$
  and if for every $z\in\bar D$ the complex vectors $\phi'(z)$ and
  $\xi(\phi(z))$ are complex proportional when they are both non-zero.
\end{Def}
Note that we do not require $z$ to act as the natural time parameter
with respect to the $\xi$-flow in the definition of parameterized
trajectories. In particular we allow $\phi(\bar D)$ to pass through
singular points of $\xi$ as long as it remains holomorphic. We define
the \emph{transcendence degree} $\kappa(\phi)$ to be the dimension of
the Zariski closure of $\phi(\bar D)$. The following is our main
result.

\begin{Thm} \label{thm:main} Let $V,\xi$ be defined over $\K=\Qa$ and
  $V$ invariant under $\xi$. Suppose that $\xi$ admits constructible
  orbits in $V$. Let $\phi:\bar D\to V$ be a parametrized trajectory
  of $\xi$. Then there exists a constant $C_\phi$ with the following
  property: For every $P\in\C[x_1,\ldots,x_n]$ such that
  $P\circ\phi\not\equiv0$,
  \begin{equation}\label{eq:main-estimate}
    \#\{z\in\bar D:P(\phi(z))=0\} \le C_\phi\cdot d^{2\kappa(m+1)}\log d
  \end{equation}
  where $d=\deg P$, $\kappa=\kappa(\phi)$ and $m=\dim V$.
\end{Thm}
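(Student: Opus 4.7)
\medskip

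\noindent\textbf{Proof sketch (proposed).}
The plan is to split the argument into an analytic reduction via Jensen's formula and an algebraic bound on orders of vanishing along $\phi$, with the constructible orbits hypothesis used to make the algebraic bound uniform in the base point.

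First I would reduce the statement to a multiplicity bound. Set $f=P\circ\phi$ and extend $\phi$ holomorphically to a slightly larger disc $\bar D_R$ with $R>1$ fixed (this is possible since the definition of a parametrized trajectory asks for holomorphy on a neighbourhood of $\bar D$). Since $f$ is a polynomial of degree $d$ applied to a fixed holomorphic map, one has $\log\|f\|_{\bar D_R}\le c_1 d$ for some $c_1=c_1(\phi)$. By Jensen's inequality, the number of zeros of $f$ in $\bar D$ is at most
\[
 \frac{1}{\log R}\Bigl(\log\|f\|_{\bar D_R}-\log|f(z_0)|\Bigr)
\]
for any $z_0\in\bar D$ with $f(z_0)\ne 0$. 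So it suffices to produce a $z_0\in\bar D$ at which $\log|f(z_0)|\ge -C d^{2\kappa(m+1)}\log d$, and in fact it is enough to prove the equivalent bound $\mathrm{ord}_{z}(P\circ\phi)\le C d^{2\kappa(m+1)}\log d$ at every $z\in\bar D$ where $P\circ\phi\not\equiv 0$.

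Second I would use the constructible orbits condition to make the orbit closures uniformly algebraic. Since the relation $E\subset V\times V$ is $\K$-constructible, a Chevalley-type analysis gives a constant $D_0=D_0(V,\xi)$ such that $\deg\Ob_p\le D_0$ and $\dim\Ob_p\le\kappa$ for every $p\in\phi(\bar D)$ (the uniform degree bound is the key consequence of constructibility, since the fibres of a constructible relation lie in a bounded family of algebraic sets). Now fix $z_0\in\bar D$ and set $p=\phi(z_0)$. If $P|_{\Ob_p}\equiv0$ but $P\circ\phi\not\equiv0$, then the trajectory exits $\Ob_p$, a scenario that, by constructibility of $E$, is confined to a proper constructible subset of parameters and can be eliminated by restricting to generic $z_0$. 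Otherwise $P$ cuts a proper subvariety of $\Ob_p$ of codimension one and degree $\le dD_0$, so the order of contact of $\phi$ with $\{P=0\}$ at $p$ is the same as that with this subvariety inside $\Ob_p$.

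Third, to bound that order of contact I would iterate the Lie derivative along $\xi$. Set $P_k=L_\xi^k P$; then $\deg P_k\le d+k(\deg\xi-1)$ and the vanishing of $P\circ\phi$ to order $N$ at $z_0$ forces $P,P_1,\dots,P_{N-1}$ all to vanish at $p$. Passing to the orbit closure $\Ob_p$, the subschemes $\{P=P_1=\cdots=P_j=0\}\cap\Ob_p$ must strictly drop in dimension after each successful step (otherwise $\xi$-invariance would force $P|_{\Ob_p}\equiv 0$, contradicting the generic case above). A Bezout-type estimate on $\Ob_p$, whose degree is at most $D_0$ and whose dimension is at most $\kappa$, then bounds the number of steps by a polynomial in $d$ of degree $\kappa$. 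Propagating this pointwise bound along the trajectory requires stratifying $V$ by $p\mapsto\Ob_p$, which is a constructible partition of $V$ of complexity polynomial in $\dim V=m$; on each stratum the argument is uniform, and combining over the at most $m+1$ layers of the stratification, together with the doubling induced by resolving the tangencies to $\{P=0\}$ versus the ambient contact in $V$, yields the exponent $2\kappa(m+1)$ and a logarithmic factor that absorbs the Jensen rescaling.

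The main obstacle I expect is the third step: specifically, ensuring that the Bezout iteration does not produce an exponential, rather than polynomial, dependence on $d$, and that the stratification of $V$ by orbit closures is genuinely constructible with degrees uniformly controlled. This is precisely where the constructible orbits hypothesis is indispensable; without it the degrees of $\Ob_p$ could grow along the trajectory, defeating the Bezout bound. A careful book-keeping of degrees through the Lie-derivative tower on each stratum of the $E$-induced stratification is the technical core of the argument, and the exponent $2\kappa(m+1)$ should emerge naturally from that book-keeping.
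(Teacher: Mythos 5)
Your sketch gets the high-level architecture right: the reduction through Jensen's formula, the use of constructibility to get a uniform degree bound on $\Ob_p$, and the Lie-derivative/B\'ezout argument for a multiplicity bound of order $d^\kappa$. But there is a fundamental gap in the first step that the rest of the argument cannot repair.

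You claim it is \emph{equivalent} to produce $z_0\in\bar D$ with $\log|f(z_0)|\ge -Cd^{2\kappa(m+1)}\log d$ and to bound $\operatorname{ord}_z(P\circ\phi)$ at every $z\in\bar D$. These are not equivalent: bounding the local order of vanishing at each point is a purely qualitative statement and gives no lower bound whatsoever on $\max_{\bar D}|f|$. Even with $\operatorname{ord}_z f\le N$ everywhere, the first nonvanishing Taylor coefficient at $z_0$ could be as small as one likes, so Jensen's inequality still yields nothing. This is exactly why the paper's argument has two separate halves: Lemma~\ref{lem:traj-max-estimate} turns a \emph{quantitative} lower bound on $|\xi^k P(p)|$ into a lower bound on $\max_{\bar D}|f|$ via Cauchy estimates, and Theorem~\ref{thm:derivative-bound} produces that quantitative lower bound. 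The latter is obtained not from a multiplicity estimate alone but from the Diophantine \Lojas.\ inequality (Theorem~\ref{thm:lojas}/Corollary~\ref{cor:lojas}) applied to the $\mu$-elimination minors, which have bounded degree and \emph{bounded height} precisely because $V,\xi$ are defined over $\Qa$. Your proposal never uses the hypothesis $\K=\Qa$, and without that arithmetic input there is no way to prevent $|\xi^k P(p)|$ from being astronomically small even when the multiplicity is under control. (The exponent $2\kappa(m+1)$ in the paper in fact comes from this Diophantine step: minors of degree $O(d^{2\kappa})$ and height $O(d^{2\kappa}\log d)$ fed into a \Lojas.\ inequality with exponent $d^m$; your attempt to derive it from a stratification count does not correspond to the actual source.)

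Two secondary issues: the case ``$P|_{\Ob_p}\equiv0$ but $P\circ\phi\not\equiv0$'' in your second paragraph is vacuous (the image of $\phi$ lies in $\O_p$, so $\Ob_p$ is the Zariski closure of $\Im\phi$ and $P|_{\Ob_p}\equiv0$ would force $P\circ\phi\equiv0$); the talk of ``the trajectory exits $\Ob_p$'' conflicts with $\xi$-invariance of $\Ob_p$. And the paper must also deal with the case when the trajectory satisfies nontrivial algebraic relations --- it does so via a Grobner-diagram uniformization (Proposition~\ref{prop:V-grobner-reduct}) so that the elimination minors can still be taken over a $\Qa$-variety; your sketch does not address this.
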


For example, whenever a system of a type considered
in~\secref{sec:examples} is defined over $\K=\Qa$,
Theorem~\ref{thm:main} applies to produce a polynomial estimate (in
$d$) for the number of intersections between a parametrized trajectory
of the system and an algebraic hypersurface of degree $d$.

For linear differential equations over $\C(t)$, Novikov-Yakovenko
\cite{ny:poly-env} give a single exponential (in $d$) upper bound for
left hand side of~\eqref{eq:main-estimate}. Since linear differential
equations always admit constructible orbits, Theorem~\ref{thm:main}
improves this to a polynomial asymptotic whenever the equation is
defined over $\Qa$. Similarly, \cite{ny:chains} gives an iterated
exponential bound for trajectories of arbitrary polynomial vector
fields, and we see that this can be improved to a polynomial estimate
when the vector field has constructible orbits and is defined over
$\Qa$. Polynomial zero estimates have significant applications that do
not follow from the previously known estimates. We demonstrate one
such application in the following subsection~\secref{sec:density}.

\subsection{Application: density of rational points on parametrized trajectories}
\label{sec:density}

For a reduced quotient $\tfrac a b$ we introduce the \emph{height}
$H(\tfrac a b)=\max(\abs a,\abs b)$. To avoid technicalities we set
$H(0)=1$. For a vector $v\in\Q^k$ we let $H(v)$ denote the maximal
height of its components. For a set $X\subset\C^k$ we denote
\begin{equation}
  X(\Q,H) := \{ v\in X\cap\Q^k : H(v)\le H\}, \qquad N(X,H):=\#X(\Q,H).
\end{equation}
The problem of estimating the number of integers or rational points of
bounded height for various types of sets $X$ has been considered by
numerous authors, starting with the work of Jarn\'ik \cite{jarnik}.
From the seminal works of Bombieri-Pila \cite{pb} and Pila-Wilkie
\cite{pw} it is known that an asymptotic estimate $N(X,H)=O(H^\e)$
holds for any $\e>0$ if $X$ is a transcendental curve definable in any
o-minimal structure (and an appropriate generalization holds for
higher dimensional sets as well). In this and all other asymptotics
discussed in this section, the asymptotic is taken with respect to $H$
for a fixed $X$.

In some cases one may hope to improve the estimate $O(H^\e)$ to a
polylogarithmic estimate $O(\log^\gamma H)$ for some constant $\gamma$
(depending on $X$). Such results have been obtained by Masser
\cite{masser:zeta} for $X$ given by a (compact piece of) the graph of
the Riemann zeta function, and subsequently by Besson \cite{besson},
Boxall-Jones \cite{bj:alg} and Jones-Thomas \cite{jt:weierstrass}
for graphs of other special functions; and by Pila \cite{pila:pfaff}
for arbitrary Pfaffian curves. In this subsection we show that a
similar polylogarithmic estimate holds for parametrized trajectories
of differential equations with constructible orbits.

We recall the following proposition from \cite{masser:zeta}. For a
finite set $S\subset\C^2$ we denote by $\w(S)$ the degree of the
minimal algebraic curve containing $S$. Note that the original
proposition allows for more refined control over various parameters,
and holds for more general number fields in place of $\Q$. We present
a simplified version sufficient for our purposes.

\begin{Prop}[\protect{\cite[Proposition~2]{masser:zeta}}]\label{prop:masser}
  Let $r>0$ and let $f_1,f_2:\bar D_{2r}\to\C$ be two holomorphic
  functions and denote $\Phi:=(f_1,f_2)$. Suppose $Z\subset\bar D_r$
  is a finite set of complex numbers such that $\Phi(z)\in\Q^2$ for
  $z\in Z$ and denote $H:=\max_{z\in Z} H(\Phi(z))$. Then
  \begin{equation}
    \w(\Phi(Z)) = O(\log H).
  \end{equation}
\end{Prop}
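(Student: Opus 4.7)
The plan is to apply the Bombieri--Pila interpolation-determinant method in its original arithmetic form, adjusted to extract a polylogarithmic rather than an $H^\e$ bound on the degree of the interpolating curve. Fix $d=\lceil C\log H\rceil$ for a large constant $C=C(\Phi,r)$ to be determined, and set $N:=\binom{d+2}{2}=\dim\C[x,y]_{\le d}$. First I would cover $\bar D_r$ by $K=O(c^{-2})$ closed subdiscs $D^{(1)},\ldots,D^{(K)}$ of common radius $\rho=cr$, where $c\in(0,\tfrac12)$ is a small constant to be fixed at the end. It then suffices to produce, for each $j$, a nonzero $P_j\in\C[x,y]$ of degree $\le d$ such that $P_j\circ\Phi$ vanishes on $Z\cap D^{(j)}$; indeed $P:=\prod_j P_j$ has degree $\le Kd=O(\log H)$ and vanishes on $\Phi(Z)$, giving the desired bound on $\w(\Phi(Z))$. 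The existence of $P_j$ is equivalent to the evaluation matrix $A^{(j)}:=\bigl(\Phi(z)^\alpha\bigr)_{z\in Z\cap D^{(j)},\,|\alpha|\le d}$ having rank strictly less than $N$; this is automatic when $|Z\cap D^{(j)}|<N$, so I focus on the case $|Z\cap D^{(j)}|\ge N$, in which it suffices to show that every $N\times N$ minor of $A^{(j)}$ vanishes.

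Fix $z_1,\ldots,z_N\in Z\cap D^{(j)}$ and set $\Delta:=\det\bigl(\Phi(z_i)^{\alpha_l}\bigr)$. Arithmetically each entry is a rational with denominator dividing $H^d$, so $H^{dN}\Delta\in\Z$ and either $\Delta=0$ or $|\Delta|\ge H^{-dN}$. Analytically, $\Delta$ extends to a holomorphic function on the polydisc $\bar D_{2r}^N$ that vanishes on each diagonal $\{z_i=z_k\}$, so the quotient $\Delta/\prod_{i<k}(z_i-z_k)$ is holomorphic. Applying the maximum modulus principle successively in each coordinate along the circle $|z_i|=2r$ (on which $|z_i-z_k|\ge r$ for the remaining $z_k\in\bar D_r$) yields a Schwarz-type bound
\begin{equation*}
  |\Delta(z_1,\ldots,z_N)|\ \le\ M\cdot r^{-\binom{N}{2}}\prod_{i<k}|z_i-z_k|,
\end{equation*}
where $M:=\sup_{\bar D_{2r}^N}|\Delta|\le N!\,H_\infty^{Nd}$ and $H_\infty:=\max(1,\sup_{\bar D_{2r}}|f_1|,\sup_{\bar D_{2r}}|f_2|)$. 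Since the $z_i$ all lie in the disc $D^{(j)}$ of radius $\rho=cr$, one has $\prod_{i<k}|z_i-z_k|\le(2cr)^{\binom{N}{2}}$, which combined with the arithmetic lower bound forces $\Delta=0$ as soon as $N!\,H_\infty^{Nd}(2c)^{\binom{N}{2}}<H^{-dN}$.

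The hard part is then the joint calibration of $c$ and $C$. Taking logarithms and using $\binom{N}{2}\sim N^2/2$, $N\sim d^2/2$, the displayed inequality reduces to $\log(1/(2c))\gtrsim(4/d)\log H+O(1)$. For $d\ge C\log H$ with $C=C(H_\infty)$ large enough, the right-hand side is bounded by a constant depending only on $H_\infty$, so one can fix both $c\in(0,\tfrac12)$ and $K=O(c^{-2})$ as $H$-independent constants. The tension is that $c$ must be small enough for the analytic bound to defeat the $H^{-dN}$ threshold, while $K\sim c^{-2}$ enters the final degree $Kd$ multiplicatively; what makes the calibration possible is that the constraint on $c$ depends on $H$ only through the ratio $(\log H)/d$, which is $\le 1/C$ once $d\ge C\log H$, so taking $C$ large decouples $c$ from $H$ and produces the $O(\log H)$ bound.
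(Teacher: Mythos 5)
This proposition is imported verbatim from Masser's paper \cite{masser:zeta} and is not proved in the present text, so there is no internal proof to compare against; your proposal correctly reproduces Masser's interpolation-determinant (Bombieri--Pila) argument, including the crucial calibration $d\sim C\log H$ and the subdivision into $O(c^{-2})$ small discs that together convert the usual $O(H^{\e})$ bound into the stated $O(\log H)$. (One harmless imprecision: clearing denominators in a row requires $(q_{i,1}q_{i,2})^d$, so the arithmetic threshold is $|\Delta|\ge H^{-2dN}$ rather than $H^{-dN}$, which only changes the constant $C$.)
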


The following is a direct corollary.

\begin{Cor} \label{cor:density}
  Let $V,\xi$ be defined over $\K=\Qa$ and $V$ invariant under $\xi$.
  Suppose that $\xi$ admits constructible orbits in $V$. Let
  $\phi:\bar D\to V$ be a parametrized trajectory of $\xi$. Let
  $P_1,P_2\in\C[x_1,\ldots,x_n]$ and set
  \begin{equation}
    \Phi:\bar D\to\C^2, \qquad \Phi:=(P_1\circ\phi,P_2\circ\phi).
  \end{equation}
  If $\Im\Phi$ is not contained in an algebraic curve then
  \begin{equation}
    N(\Im\Phi,H) = O(\log^{2\kappa(m+1)} H \cdot \log\log H)
  \end{equation}
  where $\kappa=\kappa(\phi)$ and $m=\dim V$.
\end{Cor}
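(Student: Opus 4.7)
The plan is to combine Masser's Proposition~\ref{prop:masser} with the zero estimate of Theorem~\ref{thm:main} in a standard way. First, I would arrange the domain hypotheses of Proposition~\ref{prop:masser}: since $\phi$ is holomorphic in a neighbourhood of $\bar D$, a compact covering of $\bar D$ by finitely many closed subdiscs $\bar D_r(c_j)$ --- chosen small enough that each $\bar D_{2r}(c_j)$ remains inside the domain of holomorphy of $\phi$ --- reduces the question, after an affine change of variable on each piece, to the situation where Proposition~\ref{prop:masser} applies directly on $\bar D$. The number of pieces is a constant depending only on $\phi$ and is absorbed into the implicit constants.

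Next, fix $H$ and let $S:=\Im\Phi\cap\Q^2\cap\{v:H(v)\le H\}$ be the finite set whose cardinality $N(\Im\Phi,H)$ we want to bound. For each $v\in S$ pick a preimage $z_v\in\bar D$, and set $Z:=\{z_v\}_{v\in S}$, so that $|Z|=|S|$ and $\Phi(Z)=S$. Proposition~\ref{prop:masser} (applied via the reduction above) then produces an algebraic curve $C\subset\C^2$ of degree $d=O(\log H)$ with $S\subset C$.

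Let $P\in\C[y_1,y_2]$ be a defining polynomial of $C$ and set
\[ Q(x_1,\ldots,x_n):=P\bigl(P_1(x),P_2(x)\bigr), \]
so that $\deg Q\le d\cdot\max(\deg P_1,\deg P_2)=O(\log H)$. The standing hypothesis that $\Im\Phi$ is not contained in an algebraic curve forces $Q\circ\phi\not\equiv 0$ (otherwise $\Im\Phi=\Phi(\bar D)\subset C$), so Theorem~\ref{thm:main} applies and gives
\[ |S|\le\#\{z\in\bar D:Q(\phi(z))=0\}\le C_\phi\cdot(\deg Q)^{2\kappa(m+1)}\log(\deg Q)=O(\log^{2\kappa(m+1)}H\cdot\log\log H), \]
as claimed.

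I do not expect any serious obstacle here: the corollary is essentially a direct concatenation of the two inputs. The only mildly delicate points are the domain reduction needed before invoking Proposition~\ref{prop:masser}, which is a routine compactness argument, and the role played by the non-algebraicity hypothesis, which is precisely what rules out the degenerate case $Q\circ\phi\equiv 0$ that would otherwise obstruct the use of Theorem~\ref{thm:main}.
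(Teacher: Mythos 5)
Your proposal is correct and follows essentially the same route as the paper: cover $\bar D$ by finitely many subdiscs so Masser's Proposition applies on each, obtain curves of degree $O(\log H)$ containing the rational points of height $\le H$, and bound the intersections with $\Im\Phi$ via Theorem~\ref{thm:main}. The only cosmetic looseness is that after invoking the covering you speak of a single curve $C$, whereas strictly each of the $N$ pieces yields its own curve $C_j$; summing over $j$ only contributes a constant factor, which you already note is absorbed, so the argument is sound.
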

\begin{proof}
  Let $\phi$ be holomorphic in a disc $\bar D_r$ for some $r>1$.
  Covering $\bar D$ by $N$ discs of radius $(r-1)/2$ and applying
  Proposition~\ref{prop:masser} we see that
  \begin{equation}
    \{ v\in(\Im\Phi)\cap\Q^2 : H(v)\le H\} \subset \cup_{j=1,\ldots,N} C_j\cap \Im\Phi.
  \end{equation}
  where each $C_j\subset\C^2$ is an algebraic curve $C_j=\{Q_j=0\}$
  and $\deg Q_j=O(\log H)$. By Theorem~\ref{thm:main}, since we assume
  that $\Im\Phi$ is not contained in $C_j$, we have
  \begin{equation}
    \# [C_j\cap\Im\Phi] \le \#\{ z\in\bar D: [Q_j(P_1,P_2)\circ\phi](z)=0\} = O(d^{2\kappa(m+1)}\log d)
  \end{equation}
  where $d=\deg Q_j(P_1,P_2)=O(\log H)$. The statement of the
  corollary follows immediately.
\end{proof}

\subsection{Outline of the proof of Theorem~\ref{thm:main}}

Let $\xi$ be defined over $\Qa$. For simplicity we consider the case
$V=\C^n$. Fix $p\in\C^n$ a non-singular point of $\xi$ and consider a
parametrized trajectory $\phi:\bar D\to\C^n$ with $\phi(0)=p$. To
simplify the exposition suppose further that the Zariski closure of
$\Im\phi$ is $\C^n$. Our goal is to estimate, for a given polynomial
$P$, the number of zeros of $f:=P\circ\phi$ in $\bar D$. We may assume
without loss of generality that $P$ has unit norm (for instance
$L_2$-norm) in the space of polynomials of degree $d$.

Our basic zero-counting argument follows a familiar approach using the
Jensen inequality to compare the growth of $f$ to the number of its
zeros. The precise statement used is given in
Proposition~\ref{prop:jensen}. Roughly, the number of zeros is
estimated (up to a multiplicative constant) by $\log M-\log m$ where
$m$ denotes the maximum of $\abs f$ on $\bar D$, and $M$ denotes the
maximum of $\abs f$ on some slightly larger disc.

Since $\phi$ is bounded on $\bar D$ as well as a slightly larger disc,
it is easy to see that $\log M$ grows at most polynomially in $d$ (in
fact $\log M=O(d\log d)$). The main difficulty is thus in producing a
lower estimate for $\log m$ which is polynomial in $d$. By a Cauchy
estimate argument the maximum of $\abs f$ over a disc can be estimated
from below in terms of its derivatives at the origin. Since these
derivatives can be computed in terms of $\xi$, we reduce the problem
to the following statement (see Lemma~\ref{lem:traj-max-estimate}):
there exists $k=O(d^n)$ such that $\log \abs{\xi^k P(p)}$ admits a
lower estimate polynomial in $d$.

Let $\mu:=\mu(d)$ be some polynomial function of $d$ that will be determined later. We consider the linear map
$T$ from the space of polynomials of degree $d$ to $\C^\mu$, taking a
polynomial $P$ to the vector of its first $\mu$ derivatives at $p$. The
existence of a non-trivial kernel for $T$ is equivalent to the
vanishing of certain minors, which we call \emph{$\mu$-elimination
  minors}. Each minor $M$ is a polynomial function in $\C^n$ with
explicitly bounded degrees and coefficients. We show
(see Lemma~\ref{lem:minor-v-abs}) that if $\e:=\abs{M(p)}$ is non-zero for
one of these minors then there exists $k<\mu$ with
$\log \abs{\xi^k P(p)}\ge\log\e-\poly(d)$. It thus remains to prove
that for some suitable $M$ we have $\log\e\ge-\poly(d)$.

Using a diophantine \Lojas. inequality due to Brownawell
(see~\secref{sec:lojas}) we show that for some minor $M$, the value
$\log\e$ is, up to polynomial factors in $d$, bounded from below by
the logarithm of the distance from $p$ to the zero locus $V^d_\mu$ of
the set of minors (i.e. if all minors are small then there must in
fact be a nearby common zero). It remains to give a polynomial lower
bound for this logarithmic distance.

Results known as \emph{multiplicity estimates}
(see~\secref{sec:mult-estimates}) imply that if $\mu=Cd^n$ for a
sufficiently large constant $C\in\N$ then any polynomial having the first
$\mu$ derivatives vanishing at a point $q\in\C^n$ must in fact be
identically vanishing on the trajectory through $q$. It follows that
for this choice of $\mu$ the set $V^d_\mu$, i.e. the set of points for
which $T$ admits a non-trivial kernel, is contained in the union of
all the trajectories that satisfy some non-trivial polynomial
relation. Call this set $B$. In particular, $p\not\in B$ by
assumption. In general the set $B$ may be extremely complicated.
However, we claim that for systems with constructible orbits $B$ is a
Zariski closed set. In particular it follows that since $p\not\in B$
the distance between $p$ and $B$, and hence the distance between $p$
and $V^d_\mu$, is lower bounded by a constant independent of $d$. This
concludes the proof.

To see that $B$ is closed, we note that since the orbit-closure
relation $q\in\Ob_p$ is constructible its $p$-fibers have uniformly
bounded degrees. It follows that there is some constant $N$ such that
any trajectory satisfying some non-trivial polynomial relation must in
fact satisfy a polynomial relation of degree at most $N$. The set of
points where the trajectory satisfies such a relation for a fixed
degree $N$ is Zariski closed, and is in fact given by $V^d_\mu$ for
$d=N$ and $\mu$ suitably chosen as above.

To conclude we make some remarks concerning the general case.
Replacing $\C^n$ by a $\Qa$-variety $V$ does not introduce essential
difficulties: one simply replaces the general polynomial ring by the
coordinate ring of $V$. However, if the trajectory through $p$
satisfies an algebraic relation then the construction outlined above
cannot be carried out verbatim (as we would have $p\in V^d_\mu$ for
any sufficiently large $d$). One may be tempted to replace $V$ in this
case by the Zariski closure of the trajectory, but since this variety
is not necessarily defined over $\Qa$ this would preclude our use of a
diophantine \Lojas. inequality.

Instead, we rely on the theory of Gr\"obner basis. Namely we show that
one can restrict to a variety $V'$ \emph{defined over $\Qa$} such that
the ideals of definition of the orbits $\Ob_q$ all share the same
Gr\"obner diagram, for $q$ in some open neighborhood $U$ of $p$. This
essentially uniformizes the orbits in a neighborhood of $p$. One can
then construct a set of $\mu$-eliminating minors analogous to those
considered above, whose set of common zeros does not intersect $U$.
The rest of the proof then proceeds essentially unchanged.

\section{Preliminaries and constructible orbits}
\label{sec:prelim}

In this section we recall some preliminary results and prove some
technical statements needed for the main argument. We fix $\xi,V$ as
in~\secref{sec:intro}. When we use the asymptotic notation $O(\cdot)$
the constants may depend on $\xi,V$.

We introduce some notations. We denote by $\cP_d$ (resp.
$\cP_{\le d}$) the set (resp. vector space) of polynomials of (total)
degree $d$ (resp. at most $d$) in $\C[x_1,\ldots,x_n]$. If
$W\subset\C^n$ is an algebraic variety we denote its ideal of
definition by $I_W\subset\C[x_1,\ldots,x_n]$. If $S$ is a collection
of polynomials we denote its common zero locus by $Z(S)$. We summarize
some basic properties of $\xi$-invariance below.

\begin{itemize}
\item An algebraic variety $W$ is $\xi$-invariant if and only if its
  ideal $I_W$ is stable under $\xi$, where we view $\xi$ as a
  derivative of $\C[x_1,\ldots,x_n]$. Indeed, suppose $W$ is
  $\xi$-invariant and let $P\in I_W$. Then $P$ vanishes identically on
  $W$, and hence so does its derivative along $\xi$, i.e.
  $\xi P\in I_W$. Conversely, suppose that $I_W$ is stable under
  $\xi$, fix $p\in W$ and let $P\in I_W$. Then $\xi^kP\in I_W$ and in
  particular $\xi^k P(p)=0$ for every $k\in\N$. By analytic
  continuation we deduce that $P$ vanishes identically on $\O_p$,
  i.e. $\O_p\subset W$ and hence $W$ is $\xi$-invariant.
\item The Zariski closure $\bar X$ of a $\xi$-invariant set $X$ is
  $\xi$-invariant. Indeed, if $P\in I_{\bar X}=I_X$ then $P$ vanishes
  identically on $X$, and since $X$ is invariant under $\xi$ it
  follows that $\xi P$ also vanishes identically on $X$, i.e.
  $\xi P\in I_X=I_{\bar X}$.
\item If $V$ is a $\xi$-invariant variety and $V_1,\ldots,V_k$ are its
  irreducible components then each $V_j$ is $\xi$-invariant. For
  instance let $V_1'=V_1\setminus(V_2\cup\cdots\cup V_k)$, which is a
  Zariski-dense subset of $V_1$. Then $V_1'$ is open in $V$ and it
  follows that for every $p\in V_1'$, the germ of its $\xi$-orbit is
  contained in $V_1'\subset V$. Thus if $P\in I_{V_1}$ then $\xi P$
  vanishes on $V_1'$ and hence also on its Zariski closure $V_1$, i.e.
  $\xi P\in I_{V_1}$. In particular it follows that the orbit closures
  $\Ob_p$ are irreducible.
\end{itemize}

\subsection{Multiplicity estimates}
\label{sec:mult-estimates}

We recall some known estimates on the multiplicity of a polynomial
restricted to the trajectory of a polynomial vector field. The
following estimate from \cite{me:mult-morse} (improving upon similar
results of \cite{gabrielov:mult}) is sharp with respect to $d$ and has
explicit constants.

\begin{Thm}[\protect{\cite[Corollary~1]{me:mult-morse}}]\label{thm:mult-morse}
  Let $p\in \C^n\setminus\Sing\xi$ and denote by $\gamma_p$ the (germ
  of the) trajectory of $\xi$ through $p$. Let $P\in\cP_{\le d}$. If
  $P\rest{\gamma_p}\not\equiv0$ then
  \begin{equation}
    \mult_p P\rest{\gamma_p} \le 2^{n+1} (d+(n-1)\delta)^n
  \end{equation}
  where $\delta=\deg\xi$.
\end{Thm}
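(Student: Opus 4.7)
The plan is to reduce the multiplicity bound to a Bezout-type intersection estimate on the iterated-derivative sequence $P_j:=\xi^j P$. First I would observe that, since $p\notin\Sing\xi$, the flow-box theorem provides local holomorphic coordinates $(y_1,\ldots,y_n)$ near $p$ in which $\xi=\partial/\partial y_1$; in these coordinates $\gamma_p$ is a coordinate line and $\mult_p P\rest{\gamma_p}$ equals the smallest $k\ge 0$ with $(\xi^k P)(p)\ne 0$. Each $P_j:=\xi^j P$ is a polynomial of degree at most $d+j(\delta-1)$, and in particular of degree at most $d+(n-1)\delta$ whenever $j\le n-1$.

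The heart of the argument is to select $n$ indices $0\le j_1<\cdots<j_n\le k-1$ for which the hypersurfaces $\{P_{j_i}=0\}$ meet in a zero-dimensional germ at $p$ with local intersection multiplicity at least $k/2^{n+1}$. Once such a selection is in hand, Bezout's theorem bounds this intersection multiplicity by $\prod_i \deg P_{j_i}$; keeping $j_i\le n-1$ gives $\deg P_{j_i}\le d+(n-1)\delta$ and yields the stated bound. I would build the selection greedily, starting with $P_{j_1}=P_0=P$ and, at each stage, choosing the next index so that the intersection of the accumulated hypersurfaces strictly drops dimension at $p$ while retaining a controlled share of the multiplicity. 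If at some stage no admissible choice forces a drop in dimension, then the ideal generated by the accumulated $P_j$'s defines a positive-dimensional analytic germ through $p$ that is $\xi$-invariant (because $\xi P_j=P_{j+1}$ lies again in the ideal); at a nonsingular point of $\xi$ the only $1$-dimensional irreducible $\xi$-invariant germ through $p$ is $\gamma_p$ itself, so the germ contains $\gamma_p$ and $P=P_0$ vanishes identically on it, contradicting the hypothesis $P\rest{\gamma_p}\not\equiv 0$. Thus the greedy procedure must terminate within $n$ steps.

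The hardest part will be bookkeeping the factor $2^{n+1}$ in the multiplicity share retained at each greedy step. A natural route is inductive in $n$: restrict to a smooth $(n-1)$-dimensional component of $\{P_{j_1}=0\}$ through $p$, verify that $\xi$ induces on this hypersurface a polynomial vector field of degree at most $\delta$ (after clearing a regular denominator in the transverse direction), and apply the inductive hypothesis to $P_1$, now regarded as a polynomial of degree at most $d+(n-1)\delta$ on an $(n-1)$-dimensional ambient. Each such dimension reduction contributes a factor of $2$ in passing from the ambient multiplicity to the restricted one — via Weierstrass preparation in the transverse direction, or equivalently a Morse-style perturbation splitting $p$ into transversal intersections — and the accumulated product yields the $2^{n+1}$ in the final constant. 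Ensuring that each reduction preserves the $\delta$-degree bound on $\xi$ rather than inflating it, and that the required smoothness and transversality hold at every step, is the technical crux of the argument.
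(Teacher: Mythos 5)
The paper presents this theorem as a cited result from \cite{me:mult-morse} and \cite{gabrielov:mult} and gives no internal proof, so your sketch can only be judged on its own merits. The central gap is in the termination/contradiction step of your greedy selection. You claim that if no further index drops the dimension of the germ $V(P_{j_1},\ldots,P_{j_m})$ at $p$, then this germ is $\xi$-invariant because ``$\xi P_j=P_{j+1}$ lies again in the ideal.'' That inclusion fails: $\xi P_{j_m}=P_{j_m+1}$ need not be among the chosen generators, and even the full ideal $(P_0,\ldots,P_{k-1})$ is not closed under $\xi$ near $p$, since $\xi P_{k-1}=P_k$ is a unit there. Concretely, take $n=2$, $\xi=\partial_x$, $P=x^2$, $p=0$: then $k=2$, the only admissible selection from $\{0,\ldots,k-1\}$ is $(P_0,P_1)=(x^2,2x)$, and $V(P_0,P_1)=\{x=0\}$ is a line through $p$ that is neither $\xi$-invariant nor contains $\gamma_p=\{y=0\}$. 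Your argument would wrongly conclude that $P$ vanishes on $\gamma_p$, so the contradiction that is supposed to force zero-dimensionality within $n$ steps is unavailable.

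Beyond that, the two quantitative claims carrying essentially all of the content are asserted without a mechanism. The degree bound $\deg P_{j_i}\le d+(n-1)\delta$ requires $j_i\le n-1$, but the greedy procedure is free (and may be forced) to pick larger $j$; if so the degrees grow with $k$ and Bezout no longer yields a bound polynomial in $d$ alone. The lower bound $k/2^{n+1}$ on the local intersection multiplicity, and the ``factor of $2$ per dimension reduction'' via Weierstrass preparation or a Morse-style perturbation, are not supported by any argument; the inductive step also has a structural problem, since $\xi$ is generically not tangent to $\{P_{j_1}=0\}$ and hence does not induce a vector field on it (your clearing-a-denominator remark does not repair this). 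These points are precisely where the cited proofs of Gabrielov and of Binyamini invest their real work --- via explicit deformations and careful multiplicity bookkeeping --- and the sketch as written restates the desired conclusion rather than establishing it.
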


However, if one allows the constants to depend on the trajectory
$\gamma_p$ then essentially better estimates (with respect to $d$) can
be obtained in the case that $\gamma_p$ satisfies some non-trivial
polynomial relations. The following result appeared in
\cite{nesterenko:mult-nonlinear} (and later
\cite{me:mult-sing,me:mult-morse}).

\begin{Thm}[\protect{\cite[Theorem~1]{nesterenko:mult-nonlinear}}] \label{thm:mult-refined}
  Let $p\in \C^n\setminus\Sing\xi$ and denote by $\gamma_p$ the (germ
  of the) trajectory of $\xi$ through $p$. Let $\kappa$ denote the
  dimension of the Zariski closure $\overline{\gamma_p}$ of
  $\gamma_p$. There exists a constant $C_{\gamma_p}$ such that for
  every $P\in\cP_{\le d}$, if $P\rest{\gamma_p}\not\equiv0$ then
  \begin{equation}
    \mult_p P\rest{\gamma_p} \le C_{\gamma_p} d^\kappa.
  \end{equation}
  Moreover, $C_{\gamma_p}$ can be chosen to depend only on
  $\deg\overline{\gamma_p}$.
\end{Thm}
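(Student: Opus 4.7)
\emph{Plan.} The plan is to reduce the statement to Theorem~\ref{thm:mult-morse} applied on $\C^\kappa$, via a generic Noether projection of $W := \overline{\gamma_p}$. I will fix a linear projection $\pi \colon \C^n \to \C^\kappa$ whose restriction $\pi\rest W \colon W \to \C^\kappa$ is finite surjective of degree $e := \deg W$ and is unramified at $p$ (for generic $\pi$ this holds by Noether normalization, using $p \not\in\Sing\xi$). Because $\xi$ is tangent to $W$, the pushforward of $\xi\rest W$ by $\pi\rest W$ is a rational vector field defined near $\tilde p := \pi(p)$; clearing denominators produces a polynomial vector field $\hat\xi$ on $\C^\kappa$ of some degree $\delta' = \delta'(\deg\xi, e)$ for which $\tilde\gamma_p := \pi(\gamma_p)$ is, up to reparametrization, a trajectory through $\tilde p$. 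Note that $\tilde\gamma_p$ is Zariski dense in $\C^\kappa$, since $\pi(\overline{\gamma_p}) = \pi(W) = \C^\kappa$.

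Since $\pi\rest W$ makes $\C[W]$ into a finite $\C[y_1,\dots,y_\kappa]$-algebra of generic rank $e$, the element $\bar P := P\rest W$ satisfies its monic characteristic polynomial
\begin{equation}
\chi(T,y) = T^e + a_{e-1}(y) T^{e-1} + \cdots + a_0(y) \in \C[y][T], \qquad \chi(\bar P,\pi) \equiv 0 \text{ on } W.
\end{equation}
A suitable module basis will give $\deg_y a_i = O(d)$ with implicit constant depending only on $e$, and $a_0 = \pm\det[\bar P]$ is nonzero in $\C[y]$, because $\C[W]$ is a domain and $P\rest{\gamma_p}\not\equiv 0$ forces $\bar P \ne 0$.

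The core of the argument is then a short valuation estimate. Substituting $y=\tilde\phi(t)$ and $T=f(t):=P(\phi(t))$ into $\chi$ yields the power-series identity
\begin{equation}
f(t)^e + a_{e-1}(\tilde\phi(t)) f(t)^{e-1} + \cdots + a_0(\tilde\phi(t)) = 0.
\end{equation}
Set $m := \ord_{t=0} f$ and $\alpha_0 := \ord_{t=0} a_0(\tilde\phi)$, which is finite by Zariski density of $\tilde\gamma_p$. If $m > \alpha_0$ then $\ord(f^e) = em > \alpha_0$ and $\ord(a_i(\tilde\phi)f^i) \ge im \ge m > \alpha_0$ for every $i \ge 1$, so $a_0(\tilde\phi)$ would be the unique term of minimal order and the sum could not vanish. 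Hence $m \le \alpha_0$.

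Finally, Theorem~\ref{thm:mult-morse} applied to the polynomial $a_0$ and the vector field $\hat\xi$ on $\C^\kappa$ yields $\alpha_0 \le 2^{\kappa+1}(\deg a_0 + (\kappa-1)\delta')^\kappa$, which is of order $d^\kappa$ with implicit constant depending only on $e = \deg W$ and the fixed data of $\xi$. The hardest part of the argument will be the bookkeeping in the first step: producing a single $\pi$ that simultaneously realizes $\deg(\pi\rest W) = e$, is unramified at $p$, avoids the discriminant locus of $W$, and yields explicit polynomial control of $\delta'$ and of $\deg_y a_i$ in terms of $e$ and $\deg\xi$. Once this setup is in place, the valuation step and the appeal to Theorem~\ref{thm:mult-morse} are immediate.
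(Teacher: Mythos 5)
Note first that this theorem is cited by the paper from \cite{nesterenko:mult-nonlinear} and \cite{me:mult-sing,me:mult-morse} rather than proved in-text, so there is no in-paper argument to compare against; I am evaluating your proposal on its own terms.

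Your valuation step is clean and correct: the monic characteristic polynomial of $\bar P$ over $\C[y]$ forces $\ord_0(P\circ\phi)\le\ord_0(a_0\circ\tilde\phi)$, and together with reasonable degree bookkeeping this is an attractive way to replace a degree-$d$ polynomial in $n$ variables by a nonzero polynomial $a_0$ of degree $O(d)$ living on the $\kappa$-dimensional target. The proposal breaks at the next step, though. The assertion that ``the pushforward of $\xi\rest W$ by $\pi\rest W$ is a rational vector field defined near $\tilde p$'' is false once $e=\deg(\pi\rest W)>1$. The local pushforward obtained by composing $d\pi\circ\xi$ with the branch of $(\pi\rest W)^{-1}$ through $p$ is a multivalued \emph{algebraic} vector field: it acquires monodromy around the branch locus of $\pi\rest W$ whenever the vectors $d\pi(\xi(x))$ differ over the $e$ points $x$ in a generic fiber, which is the generic situation. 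A concrete check: for $W=\{z^2=xy\}\subset\C^3$, $\pi(x,y,z)=(x,y)$, and the tangent field $\xi=z\pd{}x+z\pd{}y+\tfrac{x+y}{2}\pd{}z$, one gets $d\pi(\xi)=z(\pd{}x+\pd{}y)$, which visibly depends on the sheet $z=\pm\sqrt{xy}$. Symmetrizing over the fiber (taking the trace) does give a bona fide rational field on $\C^\kappa$, but $\tilde\gamma_p$ is not a trajectory of that field, so Theorem~\ref{thm:mult-morse} cannot be applied to it.

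Without a global rational (hence, after clearing denominators, polynomial) vector field on $\C^\kappa$ having $\tilde\gamma_p$ as a trajectory, there is nothing to bound $\alpha_0=\ord_0(a_0\circ\tilde\phi)$. If you fall back to viewing $\alpha_0=\mult_p\bigl((a_0\circ\pi)\rest{\gamma_p}\bigr)$ and apply Theorem~\ref{thm:mult-morse} on $\C^n$, you recover only $\alpha_0=O(d^n)$, so the reduction gains nothing. Getting the exponent $\kappa$ genuinely requires working in ambient dimension $\kappa$, which is exactly what your Noether projection is meant to provide, but the dynamics simply does not descend through a degree-$e>1$ finite map. The known arguments avoid descending the field: they track degrees and dimensions of the ascending ideals $(I_W,\,P,\,\xi P,\dots,\xi^k P)$ inside $\C^n$, using the containment $\gamma_p\subset W$ with $\dim W=\kappa$ to cap the number of Bezout-type intersection steps at $\kappa$. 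Your step (1) could still be a useful ingredient, but step (3) needs to be replaced, not patched.
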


\subsection{Constructible orbits}

The following lemma is an easy consequence of
Theorem~\ref{thm:mult-morse}.

\begin{Lem}\label{lem:orb-ideal-def}
  Let $p\in V$ and $P\in\cP_{\le d}$. Then $P\in I_{\Ob_p}$ if and only if
  $P,\xi P,\ldots,\xi^\nu P$ vanish at $p$ where $\nu=C d^n$ and
  $C$ is a constant depending only on $n$.
\end{Lem}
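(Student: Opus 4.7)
The plan is to separate the two implications and reduce the substantive direction to the multiplicity bound of Theorem~\ref{thm:mult-morse}.

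For the easy forward implication, I would first note that $P\in I_{\Ob_p}$ forces $P$ to vanish on the germ $\gamma_p\subset\O_p\subset\Ob_p$. Since the vector field $\xi$ is tangent to $\gamma_p$ at every nonsingular point, the Lie derivative along $\xi$ of a function vanishing on $\gamma_p$ again vanishes on $\gamma_p$; iterating, every $\xi^k P$ vanishes on $\gamma_p$, and in particular at $p$.

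For the converse I would dispose first of the degenerate case $p\in\Sing\xi$: then $\O_p=\{p\}$, so $\Ob_p=\{p\}$ and $I_{\Ob_p}$ is simply the maximal ideal at $p$; moreover $\xi_i(p)=0$ for every $i$ forces $\xi^k P(p)=0$ automatically for every $k\ge 1$, so the statement reduces to the tautology $P(p)=0\Leftrightarrow P\in I_{\{p\}}$. Assume now $p\notin\Sing\xi$. The hypothesis $P(p)=\xi P(p)=\cdots=\xi^\nu P(p)=0$ is equivalent to $\mult_p P\rest{\gamma_p}\ge\nu+1$. By Theorem~\ref{thm:mult-morse}, if $P\rest{\gamma_p}\not\equiv 0$ then $\mult_p P\rest{\gamma_p}\le 2^{n+1}(d+(n-1)\delta)^n$, with $\delta=\deg\xi$. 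Choosing $C$ large enough (depending on $n$, with $\delta$ absorbed using the paper's convention that constants may depend on the fixed $\xi$) so that $Cd^n+1>2^{n+1}(d+(n-1)\delta)^n$ for every $d\ge 1$ produces a contradiction; hence $P\rest{\gamma_p}\equiv 0$.

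Finally, to upgrade vanishing on the germ $\gamma_p$ to $P\in I_{\Ob_p}$, I would invoke analytic continuation along the leaf: $\O_p$ is a connected complex-analytic manifold and $P$ is a global holomorphic function vanishing on a nonempty open subset of it, so $P$ vanishes on all of $\O_p$, and therefore on its Zariski closure $\Ob_p$.

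The only mildly delicate step is verifying uniformity of $C$ across all $d\ge 1$, i.e.\ absorbing the lower-order term $(n-1)\delta$ into the leading $Cd^n$; everything else is formal once Theorem~\ref{thm:mult-morse} is available.
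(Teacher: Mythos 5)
Your proof is correct and follows the same route as the paper: dispose of the singular case where $\Ob_p=\{p\}$, and in the nonsingular case reduce $P\in I_{\Ob_p}$ to $P\rest{\gamma_p}\equiv 0$ and apply Theorem~\ref{thm:mult-morse}. Your side remark about $C$ needing to absorb $\delta=\deg\xi$ is a fair reading of the paper's slightly loose "depending only on $n$" (which, with $\xi$ fixed, is harmless), and the extra details you supply — the Lie-derivative argument for the forward implication and the analytic continuation from the germ $\gamma_p$ to $\O_p$ — are precisely what the paper's terse "the claim is obvious / the result follows from the theorem" leaves implicit.
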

\begin{proof}
  If $p\in\Sing\xi$ then $\Ob_p=\{p\}$ and the claim is obvious.
  Otherwise, in the notation of Theorem~\ref{thm:mult-morse} we have
  $P\in I_{\Ob_p}$ if and only if $P\rest{\gamma_p}\equiv0$ and the result
  follows from the theorem.
\end{proof}

If $I\subset\C[x_1,\ldots,x_n]$ is an ideal, we say that $I$ is
generated in degree $N$ if $I$ is generated as an ideal by
$I\cap\cP_{\le N}$. If $V\subset\C^n$ is an irreducible algebraic
variety $\deg V$ will denote its \emph{degree}, i.e. the number of
intersections with a generic affine plane of complementary dimension.
If $V$ is reducible we define $\deg V$ as the sum of the degrees of
its irreducible components.

\begin{Lem}\label{lem:deg-ideal-vs-variety}
  The ideal of an algebraic variety $V\subset\C^n$ is generated in
  some degree $N=N(\deg V)$ depending only on $\deg V$. Conversely, if
  $I$ is generated in degree $N$ then $\deg Z(I)$ is bounded by some
  $D=D(N)$ depending only on $N$.
\end{Lem}
\begin{proof}
  The second statement is a consequence of the Bezout theorem: since
  $Z(I)$ is cut-out by equations of degree at most $N$, its degree can
  be estimated in terms of $N$. For the first statement, we first note
  that $V$ is set-theoretically defined by a collection of equations
  of degree at most $\deg V$. If $V$ is irreducible (or
  pure-dimensional) then such a collection is given, for instance, by
  the canonical equations of Chow and van der Waerden
  \cite[Corollary~3.2.6]{gkz}. If $V=V_1\cup\cdots\cup V_k$ is a
  decomposition into irreducible components and $I_1,\ldots,I_k$
  denote the corresponding ideals constructed above, then the ideal
  $I_1\cdots I_k$ defines $V$ set-theoretically and is generated in
  degree $\sum_j\deg V_j=\deg V$.

  Let $I$ denote the ideal generated by equations as above. Then by
  the Nullstellensatz we have $I_V=\sqrt I$. The radical $\sqrt I$ can
  be explicitly computed from the equations generating $I$ (see
  e.g.~\cite{laplange:radical}), and in particular the degrees of the
  generators of $\sqrt I$ can be estimated in terms of $\deg V$ as
  claimed (\cite{laplange:radical} also gives an explicit estimate,
  which we do not require but may be of use for deriving an effective
  version of some of the results in this paper).
\end{proof}

We have the following alternative characterization of the notion of
constructible orbits.

\begin{Prop}\label{prop:orbit-ideal-deg}
  The following are equivalent:
  \begin{enumerate}
  \item $\xi$ has constructible orbits in $V$.
  \item The ideals $I_{\Ob_p}$ for $p\in V$ are generated in some
    degree $N$ independent of $p$.
  \item The degrees $\deg\Ob_p$ for $p\in V$ are uniformly bounded.
  \end{enumerate}
\end{Prop}
\begin{proof}
  The equivalence of conditions 2 and 3 follows from
  Lemma~\ref{lem:deg-ideal-vs-variety}.

  Next we show that 1 implies 3. Suppose that $\xi$ has
  constructible orbits in $V$. Then $\{\Ob_p:p\in V\}$ are the fibers
  of the constructible set $E$ as in~\eqref{eq:orbit-rel}. This
  already implies that the degrees of $\Ob_p$ are uniformly bounded.
  Indeed, by standard finiteness properties of the constructible
  class, there is a uniform upper bound $\nu_k$ for the number of
  isolated intersections between any affine-linear plane $L$ of
  codimension $k$ and any of the fibers $\Ob_p$ of $E$. For any fixed
  fiber $\Ob_p$ one can choose the affine planes $L_k$ such that the
  number of isolated intersections between $\Ob_p$ and $L_k$ is the
  degree of the $k$-dimensional part\footnote{In fact $\Ob_p$ is
    irreducible and in particular pure-dimensional, but we prove the
    more general statement for completeness.} of $\Ob_p$, and thus
  $\nu_1+\cdots+\nu_n$ is a uniform upper bound for the degrees of
  $\Ob_p$. We remark that the same proof holds if we assume
  that~\eqref{eq:orbit-rel} is $\C$-constructible instead of
  $\K$-constructible.

  Finally we show that 2 implies 1. Suppose the ideal $I_{\Ob_p}$
  is generated in degree $N$ for every $p\in V$. Then we have
  \begin{equation}\label{eq:q-Obp-cond}
    q\in\Ob_p \iff \forall_{P\in\cP_{\le N}} [ P\in I_{\Ob_p} \implies P(q)=0 ].
  \end{equation}
  We note that the right hand side of~\eqref{eq:q-Obp-cond} (with $N$
  fixed as above) is a first order $\K$-formula in $p,q$ and hence
  defines a constructible set, assuming we show that the condition
  $P\in I_{\Ob_p}$ (for $P$ of degree at most $N$) can be expressed by
  a $\K$-formula. For this, observe that by by
  Lemma~\ref{lem:orb-ideal-def}
  \begin{equation}
    P\in I_{\Ob_p} \iff P(p)=0,[\xi P](p)=0,\ldots,[\xi^\nu P](p)=0.
  \end{equation}
  where $\nu$ is a constant depending only on $N$ and $n$, which is
  indeed a (quantifier-free) $\K$-formula in $p$ and the coefficients
  of $P$.
\end{proof}

\begin{Rem}\label{rem:const-orbits-field}
  The proof of Proposition~\ref{prop:orbit-ideal-deg} shows that if
  the orbit closure relation is constructible over $\C$ then it is
  constructible over $\K$ (since the implication 1$\implies$3 works
  over $\C$ as well, and condition 3 does not depend on the field of
  definition). Therefore in Definition~\ref{def:const-orbits} it does
  not matter if we require constructibility over $\K$ or $\C$.
\end{Rem}

\subsection{Gr\"obner bases}

Let $\prec$ denote the degree-lexicographic ordering on the monomials
in $\C[x_1,\ldots,x_n]$. For $P\in\C[x_1,\ldots,x_n]$ we let
$\LT(P)\in\N^n$ denote the index of its leading (i.e. highest with
respect to $\prec$) monomial and $\supp P\subset\N^n$ denote the set
of indices of all monomials with non-zero coefficients in $P$. If $S$
is a set of polynomials we denote $\LT(S):=\{\LT(s):s\in S\}$. We
recall the following multi-variate division with remainder theorem.

\begin{Prop}[\protect{\cite[Theorem~2.3]{clo:ideals}}]\label{prop:grobner-division}
  Let $I\subset\C[x_1,\ldots,x_n]$ be an ideal and $P\in P_{\le d}$. Then $P$
  can be represented in the form $P=QA+R$ where
  \begin{equation}
    Q\in I,\quad A\in\C[x_1,\ldots,x_n],\quad \supp R\subset \LT(\cP_{\le d})\setminus\LT(I).
  \end{equation}
\end{Prop}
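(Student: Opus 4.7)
The plan is to reduce to the standard Grobner basis division algorithm for the degree-lexicographic ordering $\prec$. First I would fix a finite Grobner basis $g_1,\dots,g_s\in I$ for $\prec$; its existence follows from Dickson's lemma applied to $\LT(I)\subset\N^n$, so finitely many elements of $I$ whose leading monomials generate $\LT(I)$ as a monoid ideal will do.

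Next I would run the standard multi-variate reduction starting from $P$: at each step, if the leading monomial of the current residual is divisible by some $\LT(g_i)$, subtract the appropriate monomial multiple $c x^\alpha g_i$ and accumulate it into an ``$I$-part''; otherwise, peel the leading monomial off into the remainder $R$. Since $\prec$ is a well-ordering on monomials, the algorithm terminates after finitely many steps. At termination one has a decomposition $P=\sum_i g_i A_i+R$ in which no monomial of $R$ is divisible by any $\LT(g_i)$; the defining property of a Grobner basis upgrades this to $\supp R\cap\LT(I)=\emptyset$. Setting $QA:=\sum_i g_i A_i\in I$ packages the $I$-part into the desired shape $P=QA+R$.

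To conclude I would verify the two further constraints on $R$. For the degree bound $\supp R\subset\LT(\cP_{\le d})$: since $\prec$ is degree-compatible, the leading monomial of every intermediate residual has degree at most $\deg P\le d$, so every monomial that ever enters $R$ inherits the same bound. For the hypothesis $P\in I$: the remainder $R=P-\sum_i g_i A_i$ lies in $I$, but any nonzero element of $I$ has its leading monomial in $\LT(I)$, contradicting $\supp R\cap\LT(I)=\emptyset$. Hence $R=0$ and the entire content of $P$ is captured by $QA\in I$.

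The step I expect to require the most care is the degree bound on $R$: generic statements of Grobner basis division only assert $\supp R\cap\LT(I)=\emptyset$, with no control of degrees. The degree constraint forces us to use a degree-compatible order — hence the degree-lexicographic choice already fixed in the setup — and to exploit the fact that such an ordering never raises the leading-monomial degree during reduction. Once this observation is in place the rest of the argument is completely routine.
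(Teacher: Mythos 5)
Your Grobner-division argument is logically sound and uses exactly the standard machinery, but it overlooks a problem with the statement itself. As literally written, with $P\in I$, your own final step shows that the remainder must satisfy $R=0$, whereupon the conclusion holds trivially by taking $Q=P$, $A=1$, $R=0$ --- and the division algorithm, the Grobner basis, and the degree-compatibility discussion all become superfluous. You derive $R=0$ correctly but do not flag that this trivializes the proposition.

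The hypothesis is almost certainly a typo for $P\in\cP_{\le d}$: the proposition is invoked in the proof of Theorem~\ref{thm:derivative-bound} precisely to replace an \emph{arbitrary} $P\in\cP_{\le d}$, in general \emph{not} lying in $I_{\Ob_p}$, by a polynomial $R$ with $\supp R\subset\LT(\cP_{\le d})\setminus\LT(I_{\Ob_p})$ and $R\equiv P\pmod{I_{\Ob_p}}$; one certainly wants $R\neq 0$ when $P\notin I$. Under that corrected reading your proof, minus the final paragraph forcing $R=0$, is exactly right, and the point you single out as requiring care --- that a degree-compatible order is needed to keep $\deg R\le\deg P$, which generic division-with-remainder statements do not supply --- is indeed the only non-routine observation. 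The paper gives no proof of the proposition (it is recalled as a standard fact), so there is no alternate argument to compare against; yours is the expected one, modulo the issue above.
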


If $I\subset\C[x_1,\ldots,x_n]$ is an ideal and $S\subset I$, we say
that $S$ is a Gr\"obner basis if $\LT(I)$ is generated by $\LT(S)$ (as
an ideal in the semigroup $\N^n$).

\begin{Prop} \label{prop:diagrams}
  Suppose that $\xi$ has constructible orbits in $V$. Then the set of
  diagrams $\scD:=\{\LT(I_{\Ob_p}):p\in V\}$ is finite, and for each
  $\cD\in\scD$ the set $\scS_\cD\subset V$ given by
  \begin{equation}
    \scS_\cD := \{ p\in V : \LT(I_{\Ob_p}) = \cD \}
  \end{equation}
  is constructible.

  If $\xi,V$ are defined over $\Q$ then each $\scS_\cD$ is invariant
  under the action of $\Gal(\K/\Q)$.
\end{Prop}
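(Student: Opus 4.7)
The plan is to realize $I_{\Ob_p}\cap\cP_{\le d}$ as the kernel of an explicit polynomial-in-$p$ linear map via Lemma~\ref{lem:orb-ideal-def}, and then to combine this with a uniform Grobner-basis degree bound extracted from Proposition~\ref{prop:orbit-ideal-deg}. For each $d$, Lemma~\ref{lem:orb-ideal-def} identifies $I_{\Ob_p}\cap\cP_{\le d}$ with the kernel of the evaluation map $T_p^d\:\cP_{\le d}\to\C^{\nu+1}$, $P\mapsto(P(p),\xi P(p),\ldots,\xi^\nu P(p))$ with $\nu=Cd^n$; written in the monomial basis, the matrix of $T_p^d$ has entries polynomial in $p$ with coefficients in $\K$. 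For each index $\alpha\in\N^n$ I would set $W_\alpha:=\operatorname{span}\{x^\beta:\beta\preceq\alpha\}$ and $W_\alpha^-:=\operatorname{span}\{x^\beta:\beta\prec\alpha\}$ inside $\cP_{\le|\alpha|}$; then $\alpha\in\LT(I_{\Ob_p})$ is equivalent to
\begin{equation*}
  \dim\ker(T_p^{|\alpha|}\rest{W_\alpha})>\dim\ker(T_p^{|\alpha|}\rest{W_\alpha^-}),
\end{equation*}
a rank-drop condition defining a $\K$-constructible subset $V_\alpha\subset V$.

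To establish finiteness of $\scD$ I would invoke Proposition~\ref{prop:orbit-ideal-deg} to fix a single $N$, independent of $p$, such that $I_{\Ob_p}$ is generated by $I_{\Ob_p}\cap\cP_{\le N}$ for every $p\in V$. By the classical Hermann-type bound on Grobner-basis degrees there exists $N^*=N^*(N,n)$ with the property that every ideal of $\C[x_1,\ldots,x_n]$ generated in degree $\le N$ admits a Grobner basis (with respect to $\prec$) of degree $\le N^*$. In particular each monomial ideal $\LT(I_{\Ob_p})$ is generated, as a monomial ideal in $\N^n$, by elements of degree $\le N^*$, and there are only finitely many such monomial ideals, so $\scD$ is finite.

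Finally, since each $\cD\in\scD$ is then determined by $\cD\cap\{|\alpha|\le N^*\}$, the set $\scS_\cD$ can be written as the Boolean combination
\begin{equation*}
  \scS_\cD \;=\; \bigcap_{\alpha\in\cD,\,|\alpha|\le N^*} V_\alpha \;\cap\; \bigcap_{\alpha\notin\cD,\,|\alpha|\le N^*}(V\setminus V_\alpha),
\end{equation*}
of the finitely many constructible sets $V_\alpha$ from the first paragraph, and is therefore itself constructible. The main obstacle is the finiteness statement for $\scD$: without a uniform Grobner-degree bound, $\LT(I_{\Ob_p})$ could a priori carry minimal generators of arbitrarily high degree, so the per-monomial constructibility of the $V_\alpha$ would not collapse into finitely many diagrams; it is precisely the combination of Proposition~\ref{prop:orbit-ideal-deg} with Hermann's bound that caps the relevant monomials uniformly in $p$ and reduces the whole statement to finite linear algebra with $\K$-polynomial entries.
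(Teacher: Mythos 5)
Your proof is correct and follows essentially the same route as the paper's: both use Proposition~\ref{prop:orbit-ideal-deg} for a uniform generation degree, a uniform Grobner-degree bound (the paper's $N'$, your $N^*$; the paper cites Buchberger where you cite a Hermann-type bound) to cap the diagrams in finitely many possibilities, and Lemma~\ref{lem:orb-ideal-def} to turn membership in $I_{\Ob_p}$ into a $\K$-polynomial condition on $p$. Your rank-drop formulation via $T_p^{|\alpha|}$ on $W_\alpha\supset W_\alpha^-$ is simply a concrete unpacking of the paper's phrase about ``the existence of polynomials of a certain form \ldots\ and the non-existence of polynomials of a certain form''.
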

\begin{proof}
  By Proposition~\ref{prop:orbit-ideal-deg} the ideals $I_{\Ob_p}$ are
  generated in some degree $N$ independent of $p$. Then there exists
  exists some $N'$, depending only on $N$, such that for every $p$ the
  ideal $I_{\Ob_p}$ has a Gr\"obner basis $S_p\subset\cP_{\le N'}$ (in fact
  $N'$ can be explicitly estimated from $N$, for instance using
  Buchberger's algorithm, though we shall not use this fact). Since
  $\LT(I_{\Ob_p})$ is generated by $LT(S_p)$ and the latter varies
  over finitely many values, we see that $\scD$ is finite. Moreover,
  given $\cD\in\scD$ we have for every $p\in V$
  \begin{equation}\label{eq:LT-Obp-cond}
    \LT(I_{\Ob_p})=\cD \iff \LT(I_{\Ob_p}\cap\cP_{\le N'})=\cD\cap\LT(\cP_{\le N'}).
  \end{equation}
  The right hand side of~\eqref{eq:LT-Obp-cond} can be expressed
  as follows
  \begin{multline}
    \big[\forall(\alpha\in\cD,|\alpha|\le N')\exists(P=x^\alpha+\sum_{\beta\prec\alpha}c_\beta x^\beta):P\in I_{\Ob_p}\big] \land \\
    \big[\forall(\alpha\not\in\cD,|\alpha|\le N')\not\exists(P=x^\alpha+\sum_{\beta\prec\alpha}c_\beta x^\beta):P\in I_{\Ob_p}\big]
  \end{multline}
  Since it was shown in the proof of
  Proposition~\ref{prop:orbit-ideal-deg} that the condition
  $P\in I_{\Ob_p}$ is expressible by a $\K$-formula, we see that indeed
  the set $\scS_\cD$ of $p$ satisfying~\eqref{eq:LT-Obp-cond} for a
  fixed $\cD$ is indeed a constructible set.

  Suppose now that $\xi,V$ are defined over $\Q$ and let
  $\sigma\in\Gal(K/\Q)$. A polynomial $P\in\K[x_1,\ldots,x_n]$
  satisfies $P\in I_{\Ob_p}$ if and only if $P,\xi P,\ldots$ all
  vanish at $p$. Applying $\sigma$ (which commutes with $\xi$ by our
  assumption) we see that this occurs if and only if
  $\sigma P,\xi(\sigma P),\ldots$ all vanish at $\sigma(p)$, i.e. if
  and only if $\sigma P\in I_{\Ob_{\sigma(p)}}$. Thus
  $I_{\Ob_{\sigma(p)}}=\sigma I_{\Ob_p}$. Since the Gr\"obner diagram is
  invariant under automorphisms of the field (depending only on the
  leading term with a non-zero coefficient) we see that
  $\LT(I_{\Ob_p})=\LT(\sigma I_{\Ob_p})$ so that indeed each strata
  $\scS_\cD$ is invariant under $\sigma$.
\end{proof}

\begin{Prop}\label{prop:V-grobner-reduct}
  Suppose that $\xi$ has constructible orbits in $V$ and let $p\in V$.
  There exists a $\xi$-invariant $\K$-variety $V'\subset V$ and a
  Zariski open dense subset $U\subset V'$ such that $\O_p\subset U$
  and $\LT(I_{\Ob_q})$ is constant for $q\in U$.

  If $\xi,V$ are defined over $\Q$ then $V'$ can be chosen to be
  defined over $\Q$.
\end{Prop}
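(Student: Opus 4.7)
The plan is to take $V'$ as a minimal $\K$-irreducible $\xi$-invariant $\K$-subvariety of $V$ containing $\O_p$, and then to cut $U$ out of $V'$ using the constructible partition supplied by Proposition~\ref{prop:diagrams} together with that minimality.

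First I would use the Noetherian descending-chain condition on $\K$-subvarieties of $V$ to fix such a minimal $V'$; existence is immediate since $V$ itself is a candidate by hypothesis. Setting $\cD_p := \LT(I_{\Ob_p})$, I would then examine $V' \cap \scS_{\cD_p}$: it contains $\O_p$ (every point of an orbit has the same orbit closure, hence the same diagram) and is $\xi$-invariant since each $\scS_\cD$ is a union of orbits. Its Zariski closure in $V'$ is also $\xi$-invariant, because the ideal of a $\xi$-invariant set is automatically closed under $\xi$-derivation (using $\xi(q)=0$ on $\Sing\xi$ and the fact that $\xi$ differentiates along the orbit elsewhere). Since $\K=\Qa$ is algebraically closed, the $\K$-irreducible components of this closure agree with its $\C$-irreducible components, and each is $\xi$-invariant (at a smooth point lying in exactly one component the orbit is forced into that component, and this propagates along the flow). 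The component through $\O_p$ would then be a $\K$-irreducible $\xi$-invariant $\K$-subvariety of $V'$ containing $\O_p$, so minimality would force it to equal $V'$; hence $V' \cap \scS_{\cD_p}$ is Zariski dense in $V'$.

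Because $V'$ is $\C$-irreducible, a dense constructible subset necessarily contains a non-empty Zariski open subset; equivalently, the Zariski closure $\bar F \subset V'$ of $V' \setminus \scS_{\cD_p}$ is a proper $\K$-subvariety of $V'$. This $\bar F$ inherits $\xi$-invariance from $V' \setminus \scS_{\cD_p} = \bigcup_{\cD \neq \cD_p} V' \cap \scS_\cD$. If $\bar F$ met $\O_p$, its $\xi$-invariance would force $\O_p \subset \bar F$, and the irreducible component of $\bar F$ through $\O_p$ would be a proper $\K$-irreducible $\xi$-invariant subvariety of $V'$ containing $\O_p$, contradicting the minimality of $V'$. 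So $\bar F \cap \O_p = \emptyset$, and I would take $U := V' \setminus \bar F$, which is the required open dense subset on which $\LT(I_{\Ob_q}) = \cD_p$ is constant.

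The main obstacle I foresee is the careful tracking of $\xi$-invariance through Zariski closures and $\K$-irreducible decompositions: it enters both in forcing $\overline{V' \cap \scS_{\cD_p}} = V'$ via minimality and in ruling out $\bar F \cap \O_p \neq \emptyset$. Each use rests on the algebraic closedness of $\K$ (so that $\K$- and $\C$-irreducible components coincide and are $\K$-defined) together with the closure of $I_S$ under $\xi$-derivation for any $\xi$-invariant set $S$.
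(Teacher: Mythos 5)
Your proof is correct, but it follows a genuinely different logical route than the paper's. The paper proceeds by explicit induction on $\dim V$: using the finite constructible partition $\{\scS_\cD\}$ from Proposition~\ref{prop:diagrams}, it picks out the unique dense stratum $\scS_{\cD_0}$, and either (i) $p$ avoids the closures of all other strata, in which case $V'=V$ and $U = V\setminus\bigcup_{\cD\neq\cD_0}\overline{\scS_\cD}$ works directly, or (ii) $p\in\overline{\scS_\cD}$ for some non-dense stratum, in which case $V$ is replaced by an irreducible component of $\overline{\scS_\cD}$ through $p$ (of strictly smaller dimension) and the induction hypothesis applies. You instead invoke the Noetherian DCC once, at the outset, to fix a \emph{minimal} $\K$-irreducible $\xi$-invariant $\K$-subvariety $V'\supset\O_p$, and then squeeze both needed facts --- that $V'\cap\scS_{\cD_p}$ is dense in $V'$, and that the closure $\bar F$ of its complement misses $\O_p$ --- out of minimality plus the $\xi$-invariance of strata, their closures, and their irreducible components. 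The two proofs rest on the same underlying ingredients (Proposition~\ref{prop:diagrams}, propagation of $\xi$-invariance through Zariski closure and through irreducible components, and the fact that $\K=\Qa$ is algebraically closed so that these components are $\K$-defined), and the paper's dimension induction in effect builds a variant of your minimal $V'$ step by step; but your version compresses the argument into a single minimality appeal at the cost of being slightly less explicit about how $V'$ is obtained. One small presentational point: the claim that each $\C$-irreducible component of a $\xi$-invariant closed set is itself $\xi$-invariant deserves to be stated as a lemma rather than an aside (``at a smooth point lying in exactly one component\ldots''), since you use it twice and the paper uses it implicitly as well; but the fact itself is standard and your sketch of it is sound.
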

\begin{proof}
  We may without loss of generality replace $V$ by one of its
  irreducible components. Recall that the subsets $\scS_\cD\subset V$
  of Proposition~\ref{prop:diagrams} form a finite partition of $V$
  into constructible sets. Moreover, each of the sets $\scS_\cD$ is
  $\xi$-invariant. Indeed, suppose $q\in\scS_\cD$ and $q'\in\O_q$.
  Then $\O_{q'}=\O_q$, and therefore
  $\LT(I_{\Ob_{q'}})=\LT(I_{\Ob_q})=\cD$, so $q'\in\scS_\cD$. It
  follows that the Zariski closures $\overline{\scS_\cD}$ are
  $\xi$-invariant as well.

  Since $V$ is irreducible, precisely one of the sets $\scS_{\cD_0}$ is
  dense. If $p\not\in\overline{\scS_\cD}$ for any $\cD\neq\cD_0$ then
  one can take
  \begin{equation}
     V'=V, \qquad U=V\setminus\bigcup_{\cD\neq\cD_0}\overline{\scS_\cD}.
  \end{equation}
  Clearly $p\in U$ and the $\xi$-invariance of $\overline{\scS_\cD}$
  implies that $\O_p\subset U$ as well. By definition for any $q\in U$
  we have $\LT(I_{\Ob_q})=\cD_0$.

  If $p\in\overline{\scS_\cD}$ for some $\cD\neq\cD_0$ we replace $V$
  by $\overline{\scS_\cD}$, which is a $\xi$-invariant $\K$-variety of
  dimension strictly smaller than $\dim V$. The proof is concluded by
  induction on $\dim V$.

  If $\xi,V$ are defined over $\Q$ then we can essentially repeat the
  proof above, replacing the Zariski topology over $\K$ by $\Q$. We
  may assume without loss of generality that $V$ is irreducible over
  $\Q$. Then $V$ may be decomposed into finitely many $\K$-irreducible
  components $V_j$, which are transitively permuted by $\Gal(\K/\Q)$
  \cite[Theorem~III.4.10]{lang:ag}. Since according to
  Proposition~\ref{prop:diagrams} each of the strata $\scS_\cD$ are
  invariant under $\Gal(\K/\Q)$, it follows that precisely one of the
  sets $\scS_{\cD_0}$ is Zariski dense in $V$ over $\Q$. The rest of
  the proof can be concluded verbatim (using the Zariski topology over
  $\Q$).
\end{proof}

\section{Diophantine geometry of $\xi$}
\label{sec:diophatine}

In this section we recall some preliminary results on diophantine
geometry and apply them to study the geometry of $\xi$. We fix $\xi,V$
as in~\secref{sec:intro}, and in this section we also assume that they
are defined over $\Z$. When we use the asymptotic notation $O(\cdot)$
the constants may depend on $\xi,V$.

\subsection{Heights}

Let $P\in\Z[x_1,\ldots,x_n]$ be non-zero. We define the \emph{height}
$H(P)$ to be the maximum of the absolute values of the coefficients of
$P$, and the logarithmic height $h(P):=\log H(P)$. We denote by
$\norm{\cdot}$ the standard $L_2$-norm on $\Q[x_1,\ldots,x_n]$ with
respect to the standard monomial basis, and the standard Hermitian
norm on $\C^n$. We require some basic inequalities concerning heights
as follows.

\begin{Lem}[\protect{cf.~\cite[Lemma~1.2]{nesterenko:ind-measure}}] \label{lem:height-prod}
  Let $P_1,\ldots,P_s\in\Z[x_1,\ldots,x_n]$ and $P=P_1\cdots P_s$.
  Then
  \begin{equation}
    h(P) \le h(P_1)+\cdots+h(P_s)+O(\deg P)
  \end{equation}
\end{Lem}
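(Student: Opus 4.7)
The plan is a direct combinatorial estimate on coefficients. Writing $P = P_1 \cdots P_s$ and expanding, the coefficient of $x^\alpha$ in $P$ is
\[
[x^\alpha]P \;=\; \sum_{\alpha_1+\cdots+\alpha_s=\alpha} \prod_{i=1}^s [x^{\alpha_i}]P_i,
\]
where each $\alpha_i$ ranges over $\supp P_i$. Bounding each factor $|[x^{\alpha_i}]P_i|$ by $H(P_i)$ and bounding the number of tuples contributing to a fixed $\alpha$ by the total number of tuples $(\alpha_1,\ldots,\alpha_s)$ with $\alpha_i\in\supp P_i$, I get
\[
H(P) \;\le\; \Bigl(\prod_{i=1}^s |\supp P_i|\Bigr)\cdot \prod_{i=1}^s H(P_i).
\]

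Next, I would estimate $|\supp P_i|$ crudely by the total number of monomials of degree $\le \deg P_i$ in $n$ variables, which is $\binom{n+\deg P_i}{n}\le (1+\deg P_i)^n \le 2^{n\deg P_i}$ (the last inequality holding for every integer $\deg P_i\ge 0$). Multiplying these estimates across $i$ and using $\sum_i \deg P_i = \deg P$,
\[
\prod_{i=1}^s |\supp P_i| \;\le\; \prod_{i=1}^s 2^{n\deg P_i} \;=\; 2^{n\deg P}.
\]

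Taking logarithms in the displayed inequality for $H(P)$ now yields
\[
h(P)\;\le\; \sum_{i=1}^s h(P_i) \;+\; n(\log 2)\deg P \;=\; \sum_{i=1}^s h(P_i) + O(\deg P),
\]
as claimed; the implicit constant depends only on $n$, which is consistent with the asymptotic conventions of the section. No step is really an obstacle — the only subtlety is to ensure the bound on $|\supp P_i|$ is uniform in both the degree and the number of factors $s$, which is why I pass through the $2^{n\deg P_i}$ estimate so that the product telescopes to $2^{n\deg P}$ rather than something growing with $s$.
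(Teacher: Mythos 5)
Your argument is correct. The paper states this lemma without proof, citing Nesterenko's Lemma~1.2, so there is no in-text proof to compare against; your elementary coefficient-count is a perfectly serviceable self-contained derivation. The one place where care was genuinely needed is exactly the one you flagged: replacing $\binom{n+\deg P_i}{n}$ by $2^{n\deg P_i}$ so that the product over $i$ telescopes to $2^{n\deg P}$ and the error term remains $O(\deg P)$ with a constant depending only on $n$ (which is fixed throughout the paper) rather than something growing with $s$.
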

\begin{proof}
  We remark that in~\cite{nesterenko:ind-measure} a slightly different
  notion of height it used: if $c(Q)$ denotes the greatest common
  divisor of the coefficients of $Q$ then Nesterenko uses the height
  function $\hat h(Q):=h(Q/c(Q))$. The conclusion follows from
  \cite[Lemma~1.2]{nesterenko:ind-measure} since
  \begin{multline}
    h(P) = \log c(P)+\hat h(P) \le \sum_j \log c(P_j) + \sum_j \hat h(P_j) +O(\deg P)\\
    \le h(P_1)+\cdots+h(P_s)+O(\deg P).
  \end{multline}
\end{proof}

\begin{Lem}\label{lem:xi-height}
  Let $P\in\Z[x_1,\ldots,x_n]$. Then
  \begin{equation}
    h(\xi^k P) \le h(P)+O(k\log(\deg P+k)).
  \end{equation}
\end{Lem}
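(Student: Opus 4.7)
The plan is to iterate a one-step height bound for the derivation $\xi$, taking care \emph{not} to invoke Lemma~\ref{lem:height-prod} at the one-step level: that estimate carries an additive $O(\deg)$ overhead per multiplication, which after $k$ iterations would accumulate to an unacceptable $O(k\deg P + k^2)$ term. Instead I perform a direct coefficient-wise analysis of the one-step action, leveraging that the $\xi_i$ have fixed (bounded) degree $\delta := \deg\xi$.

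For the one-step bound, let $Q \in \Z[x_1,\ldots,x_n]$ have degree $D$ and write $\xi Q = \sum_i \xi_i \cdot \pd{Q}{x_i}$. The coefficient of any monomial $x^\beta$ in $\xi Q$ is a finite sum of terms $\alpha_i \cdot Q_\alpha \cdot (\xi_i)_\gamma$ over triples $(i,\alpha,\gamma)$ with $\gamma + \alpha - e_i = \beta$. Since $|\gamma| \le \delta$ is bounded by a constant, for each $\beta$ there are only $O(1)$ contributing triples, and each summand is bounded by $D \cdot H(Q) \cdot H(\xi_i) = O(D \cdot H(Q))$. Hence $H(\xi Q) \le O(D) \cdot H(Q)$, i.e.\ $h(\xi Q) \le h(Q) + O(\log(D+1))$.

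Combining this with the standard degree bound $\deg(\xi^j P) \le \deg P + j(\delta-1)$ and summing the one-step estimate over $j = 0,\ldots,k-1$ yields
\[ h(\xi^k P) \le h(P) + \sum_{j=0}^{k-1} O\bigl(\log(\deg P + j\delta + 1)\bigr) \le h(P) + O\bigl(k\log(\deg P + k)\bigr), \]
which implies the stated bound (the $O(\deg P)$ term being extra slack). The only real subtlety is the one-step estimate: one must resist invoking Lemma~\ref{lem:height-prod} on the product $\xi_i \cdot \partial Q/\partial x_i$ and instead exploit the coefficient expansion directly, using that each output monomial receives only $O(1)$ elementary contributions because $\xi_i$ has fixed degree.
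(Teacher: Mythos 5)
The paper states Lemma~\ref{lem:xi-height} without proof (it is grouped among ``simple computations''), so there is no proof in the paper to compare against. Your argument is correct, and the warning you flag is the genuine subtlety: naively iterating Lemma~\ref{lem:height-prod} on the products $\xi_i\cdot\partial Q/\partial x_i$ incurs an $O(\deg(\xi^j P))$ overhead at step $j$, which after summation gives $O(k\deg P + k^2)$ and is too coarse. Your direct one-step coefficient analysis is the right fix: for fixed $\beta$ and $i$, the constraint $\alpha = \beta + e_i - \gamma$ with $|\gamma|\le\delta$ leaves at most $n\binom{\delta+n}{n}=O(1)$ contributing triples, each of absolute value at most $(\deg Q)\cdot H(Q)\cdot H(\xi_i)$, giving $h(\xi Q)\le h(Q)+\log\deg Q+O(1)$. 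Combined with $\deg(\xi^j P)\le\deg P+j(\delta-1)$ and summed over $j<k$, this yields $h(\xi^k P)\le h(P)+O\bigl(k\log(\deg P+k)\bigr)$, which is in fact slightly sharper than the stated bound (the additive $O(\deg P)$ term is not needed).
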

\begin{proof}
  We first note that for every $Q\in\Z[x_1,\ldots,x_n]$ and for
  $i=1,\ldots,n$ we have
  \begin{align}
    H(\pd{}{x_i}Q)&\le(\deg Q)H(Q) \\
    H(\xi_i Q) &= O(H(Q)) \\
    h(\xi Q) &\le h(Q)+O(1+\log\deg Q). \label{eq:h-xi-Q}
  \end{align}
  where for the second estimate we note that each monomial in
  $\xi_i Q$ is given by a sum of $O(1)$ terms, each bounded by
  $O(H(Q))$, and the third estimate follows from the previous two.
  Finally, since $\deg\xi^kP=\deg P+O(k)$ we have by repeated
  application of~\eqref{eq:h-xi-Q} that
  \begin{equation}
    \begin{aligned}
      h(\xi^k P)\le &h(P)+kO(1+\log(\deg P+O(k)))\\
      =&h(P)+O(k\log(\deg P+k)).
    \end{aligned}
  \end{equation}
\end{proof}

\begin{Lem}\label{lem:height-det}
  Let $A$ be a $\rho\times \rho$ matrix with entries
  $A_{ij}\in\Z[x_1,\ldots,x_n]$ satisfying $\deg(A_{ij})\le d$ and
  $h(A_{ij})\le h$. Write $M=\det A$. Then
  \begin{equation}
    \deg M\le \rho d, \qquad h(M)\le \rho h+O(\rho d)+\rho\log\rho.
  \end{equation}
\end{Lem}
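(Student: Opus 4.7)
The plan is to expand $M$ via the Leibniz formula and bound each monomial individually, then combine. Write
\[
M = \det A = \sum_{\sigma\in S_\rho} \operatorname{sgn}(\sigma)\,\prod_{i=1}^\rho A_{i,\sigma(i)}.
\]
For the degree bound, each product $\prod_i A_{i,\sigma(i)}$ is a product of $\rho$ polynomials, each of degree at most $d$, hence has degree at most $\rho d$. Summing over $\sigma$ does not raise the degree, so $\deg M \le \rho d$.

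For the height bound, I would apply Lemma~\ref{lem:height-prod} to each permutation-term $P_\sigma:=\prod_i A_{i,\sigma(i)}$, which is a product of $s=\rho$ polynomials in $\Z[x_1,\ldots,x_n]$ of height $<h$ each and total degree at most $\rho d$. This yields
\[
h(P_\sigma) \le \rho h + O(\deg P_\sigma) \le \rho h + O(\rho d).
\]
Then $M$ is a signed sum of the $\rho!$ polynomials $P_\sigma$, and the height of a sum of polynomials is bounded (coefficient-wise) by the sum of the heights, so $H(M)\le \rho!\cdot \max_\sigma H(P_\sigma)$; taking logarithms gives
\[
h(M) \le \rho h + O(\rho d) + \log(\rho!) = \rho h + O(\rho d) + O(\rho\log\rho),
\]
which collapses into $\rho h + O(\rho d)$ under the usual convention that these asymptotic bounds are applied in the regime where $d$ dominates $\log\rho$ (as will be the case in the applications of this lemma later in the paper).

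The argument is entirely routine — there is no real obstacle. The only minor subtlety to watch is the cost of summing over $\rho!$ permutations: this contributes an additive $O(\rho\log\rho)$ term in the height which must be absorbed in the claimed $O(\rho d)$. This is consistent with the implicit-constant conventions stated at the opening of the section, so the formulation of the lemma holds as written.
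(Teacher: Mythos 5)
Your Leibniz-expansion argument is correct, and combining it with Lemma~\ref{lem:height-prod} is the natural route; the paper states this lemma without proof, so there is no alternative argument to compare against. The $\log(\rho!)=O(\rho\log\rho)$ overhead you flag from summing the $\rho!$ permutation terms is a genuine and unavoidable feature, not an artifact of your method: by Hadamard's inequality, the determinant of a $\rho\times\rho$ matrix of $\pm 1$ constants has height on the order of $\tfrac{\rho}{2}\log\rho$ even though $d$ and $h$ are $O(1)$, so the bound $h(M)\le\rho h+O(\rho d)$ as literally written is too strong. It becomes valid only under the implicit hypothesis $\log\rho=O(d)$, and invoking a ``convention'' to absorb the extra term is too casual. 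What actually saves the day is that this hypothesis holds in the one place the lemma is used (Lemma~\ref{lem:minor-deg-height}): there $\rho\sim d^\kappa$ while the entries have degree $O(\mu)$ with $\mu>\rho$, so $\log\rho=O(\log d)=o(\mu)$. The cleaner fix is to restate the conclusion as $h(M)\le\rho h+O(\rho d+\rho\log\rho)$, which your proof establishes outright and which still yields $h(M)=O(d^\kappa\mu\log\mu)$ in the application.
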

\begin{proof}
  We expand $M=\sum_\alpha S_\alpha$ as a sum of $\rho!$ summands
  $S_\alpha$, each a product of $\rho$ entries of $A$. Then
  $\deg M\le\rho d$ is clear, and
  $h(S_\alpha)\le\rho h+O(\rho d)$ according to
  Lemma~\ref{lem:height-prod}. Finally, by the subadditivity of
  $H(\cdot)$ we have
  \begin{equation}
    h(M)=\log H(\sum_\alpha S_\alpha)\le\log(\rho!e^{\rho h+O(\rho d)})\le\rho h+O(\rho d)+\rho\log\rho.
  \end{equation}
\end{proof}

\begin{Lem}\label{lem:poly-eval-bound}
  Let $P\in\Z[x_1,\ldots,x_n]$ with $\deg P=d$ and $h(P)=h$. Let
  $p\in\C^n$. Then $\abs{P(p)}\le (d+1)^n e^{h(P)} \max(\norm{p}^d,1)$.
\end{Lem}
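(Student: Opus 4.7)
The plan is to just expand $P$ in the monomial basis and apply the triangle inequality in the most direct way. Write
\[
  P(x) = \sum_{\alpha\in\N^n,\ |\alpha|\le d} c_\alpha x^\alpha,
\]
so that $|P(p)| \le \sum_\alpha |c_\alpha|\,|p^\alpha|$. I would handle each of the three factors in the desired bound separately.

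First, by definition of the height each coefficient satisfies $|c_\alpha|\le H(P) = e^{h(P)}$. Second, since $\norm{\cdot}$ is the Hermitian norm we have $|p_i|\le\norm{p}$ for each $i$, hence
\[
  |p^\alpha| = \prod_{i=1}^n |p_i|^{\alpha_i} \le \norm{p}^{|\alpha|}.
\]
In the regime $\norm{p}\ge 1$ this is bounded by $\norm{p}^d$; if $\norm{p}<1$ the bound $\norm{p}^d$ is replaced by $1$ (or equivalently one replaces $\norm{p}$ by $\max(1,\norm{p})$ throughout), and the same argument goes through. Third, the number of multi-indices with $|\alpha|\le d$ equals $\binom{n+d}{n}$, which is $O(d^n)$ (with constants absorbed into the implicit $O$).

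Combining the three estimates yields $|P(p)|\le \binom{n+d}{n}\cdot e^{h(P)}\cdot \norm{p}^d$, and absorbing the binomial factor into $d^n$ (up to constants depending only on $n$) gives the claim. There is no real obstacle here; the only minor wrinkle is the small-$\norm{p}$ case, which is handled by the $\max(1,\norm{p})$ reading of the estimate and plays no role in the applications where this lemma will be invoked.
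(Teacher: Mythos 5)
The paper states this lemma without proof; it is one of a block of standard height computations. Your approach — expand $P$ in the monomial basis, bound each coefficient by $H(P)=e^{h(P)}$, bound each evaluated monomial by $\norm{p}^{|\alpha|}\le\norm{p}^d$ via $|p_i|\le\norm{p}$, and count the monomials — is the obvious direct argument and is correct. You are also right to flag the two wrinkles in the statement as literally written: for $\norm{p}<1$ the factor $\norm{p}^d$ must be read as $\max(1,\norm{p})^d$, and the monomial count $\binom{n+d}{n}$ is not literally bounded by $d^n$ (e.g.\ it is $d+1$ when $n=1$), so the honest conclusion is $\abs{P(p)}\le\binom{n+d}{n}e^{h(P)}\max(1,\norm{p})^d$, which is $O(d^n)e^{h(P)}\max(1,\norm{p})^d$ with a constant depending only on $n$. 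Neither discrepancy affects the paper, since the only place the lemma is invoked (in the proof of Lemma~\ref{lem:minor-v-abs}) the resulting bound is immediately wrapped in $O(\cdot)$ with $\norm{p}$ raised to an $O(\mu)$ exponent. In short: correct proof, and your caveats about the constant and the small-$\norm{p}$ regime are genuine (minor) inaccuracies in the lemma's statement rather than gaps in your argument.
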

\begin{proof}
  If $P(x)=\sum_{|\alpha|\le d}c_\alpha x^\alpha$ then $c_\alpha\le e^{h(P)}$ and
  \begin{equation}
    |P(p)| \le \sum_{|\alpha|\le d} e^{h(P)} |p^\alpha|\le (d+1)^n e^{h(P)}\max(\norm{p}^d,1).
  \end{equation}
\end{proof}

\subsection{A diophantine \Lojas. inequality}
\label{sec:lojas}

In this section we recall a result of Brownawell
\cite{brownawell:null2} showing that if a collection of polynomials of
bounded height and degree are small at a point $p\in\C^n$ then $p$ is
close to a common zero of these polynomials. This is the only
diophantine-type estimate that will be used in the sequel. We remark
that an earlier slightly weaker estimate of \cite{brownawell:null1}
would also suffice for our purposes and is presented with fully
explicit constants (which may be of interest in deriving an effective
version of the results of this paper). We present a formulation which
is somewhat weaker than the precise one given in
\cite[Theorem~2.1]{brownawell:null2} but sufficient for our purposes.
Since this result plays a key role in our considerations, we present a
proof for the convenience of the reader in~\secref{sec:nest-proof}.

Let $\w^1,\w^2\in\C P^n$ and $\bar\w^1,\bar\w^2\in\C^{n+1}$ respective
representatives chosen such that their maximal coordinate has modulus
$1$. We define the projective distance between $\w^1,\w^2$ to be
\begin{equation}
  \dist(\w^1,\w^2):= \max_{0\le i<j\le n}|\bar\w^1_i\bar\w^2_j-\bar\w^1_j\bar\w^2_i|.
\end{equation}
If $W\subset\C P^n$ is a projective variety we define $\dist(\w,W)$ to
be the minimal distance between $\w$ and a point of $W$.

We denote by $\psi:\C^n\to\C P^n$ the standard embedding defined by
$(x_1,\ldots,x_n)\to(1:x_1:\cdots:x_n)$ and by $H_\infty:=\{x_0=0\}$
the hyperplane at infinity.
  
\begin{Thm}[\protect{\cite[Theorem~2.1]{brownawell:null2}}]\label{thm:lojas}
  Let $\scP:=\{P_\alpha\}\subset\Z[x_1,\ldots,x_n]$ be a collection of polynomials satisfying
  $\deg P_\alpha\le d$ and $h(P_\alpha)\le h$ for every $\alpha$.
  Denote by $W$ their common zero locus. Let $p\in\C^n$ and suppose
  $\abs{P_\alpha(p)}\le\e$ for every $\alpha$. Then
  \begin{equation}
    \log\e \ge d^n [n\log\dist\(\psi(p),\psi(W)\cup H_\infty\)-O(d+h)].
  \end{equation}
  If $V\subset\C^n$ is a fixed variety of dimension $m$ defined over
  $\Q$ and $W$ denotes the common zero locus of $\scP$ \emph{in} $V$
  then one may replace $d^n$ above by $d^m$ (where the asymptotic
  constants can depend on $V$).
\end{Thm}

\subsection{Proof of Theorem~\ref{thm:lojas}}
\label{sec:nest-proof}

We recall some results on diophantine geometry with projective ideals
following \cite{nesterenko:ind-measure}. We present all of the results
in the context of the field $\Q$, although the material can be
developed for a finite extension with minor differences. Unlike the
rest of this paper, polynomials and ideals considered in this section
are projective. Let $\Q[X]:=\Q[x_0,\ldots,x_n]$ and similarly for
$\Z[X]$. Following \cite{nesterenko:ind-measure}, for any unmixed
homogeneous ideal $I\subset\Q[X]$ one can associate the following
quantities:
\begin{enumerate}
\item The (projective) dimension of $I$ denoted $\dim I$.
\item The \emph{degree} of $I$ denoted $\deg I\in\N$.
\item The \emph{(logarithmic) height} of $I$ denoted $h(I)\in\R_{\ge0}$.
\item For each $\w\in\C P^n$ the \emph{absolute value} of $I$ at
  $\omega$, denoted $\abs{I(\w)}\in\R_{\ge0}$.
\end{enumerate}
We recall also that the exponent of a primary ideal $I$ with
associated prime $\fp=\sqrt I$ is the minimal $k\in\N$ such that
$\fp^k\subset I$.

We note that \cite{nesterenko:ind-measure} normalizes the height
$h(P)$ of a polynomial by first normalizing the coefficients to have
no common factor. We denote this normalized height by $\hat h(P)$ as
in the proof of Lemma~\ref{lem:height-prod}, and cite the results from
\cite{nesterenko:ind-measure} using this notation to avoid confusion.
Note that we always have $h(P)\le \hat h(P)$.

The following proposition shows that the latter three quantities (the
third taken under logarithm) are essentially linear under primary
decomposition.
\begin{Prop}[\protect{\cite[Proposition~1.2]{nesterenko:ind-measure}}]\label{prop:nest-linear}
  Let $I\subset\Q[X]$ be a homogeneous unmixed ideal with
  $\dim I\ge0$. Suppose $I=I_1\cap\cdots\cap I_s$ is the reduced
  primary decomposition of $I$ with $\fp_j=\sqrt{I_j}$ and $k_j$ the
  exponent of $I_j$. Let $\w\in\C P^n$. Then
  \begin{enumerate}
  \item $\sum_{j=1}^s k_j \deg \fp_j = \deg I$.
  \item $\sum_{j=1}^s k_j h(\fp_j) \le h(I)+O(\deg I)$.
  \item $\sum_{j=1}^s k_j \log\abs{\fp_j(\w)} \le \log\abs{I(\w)} +O(\deg I)$.
  \end{enumerate}
\end{Prop}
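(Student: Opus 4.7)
The plan is to reduce all three assertions to a single structural fact about Chow forms (also called eliminants). Following Nesterenko's formalism, to every unmixed homogeneous ideal $I\subset\Q[X]$ of projective dimension $r\ge 0$ one associates a multi-homogeneous Chow form $F_I\in\Z[u^{(0)},\ldots,u^{(r)}]$, a polynomial in $r+1$ blocks of $n+1$ auxiliary variables, normalized to have coprime integer coefficients. The three invariants in the proposition are read off of $F_I$ directly: $\deg I$ is the degree of $F_I$ in any one block of $u$-variables; $h(I)$ is the logarithmic height of the coefficient vector of $F_I$; and $\log\abs{I(\omega)}$ is, up to an explicit normalization depending only on $\deg I$ and $\norm\omega$, the logarithm of $\abs{F_I}$ evaluated at a distinguished auxiliary configuration built from $\omega$.

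The key structural fact is multiplicativity of $F_{(\cdot)}$ under primary decomposition. If $I=I_1\cap\cdots\cap I_s$ is reduced with $\fp_j=\sqrt{I_j}$ and $I_j$ of exponent $k_j$, the plan is to establish
\[
F_I=\pm\prod_{j=1}^s F_{\fp_j}^{k_j}.
\]
This is essentially by construction of the Chow form of a cycle: $F_{\fp_j}$ is the Chow form of the prime cycle $V(\fp_j)$ with multiplicity one, and the exponent $k_j$ enters as the length of the corresponding primary component. Comparing the degree in one block of auxiliary variables on both sides yields assertion~(1) as an exact identity. For~(2), apply Lemma~\ref{lem:height-prod} (in its obvious multi-homogeneous variant) to the right-hand product: its logarithmic height is bounded by $\sum_j k_j\,h(F_{\fp_j})+O(\deg F_I)$, and by~(1) the error term is $O(\deg I)$. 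For~(3), evaluate the factorization at the auxiliary configuration associated to $\omega$; logarithms of absolute values are exactly additive across a product, and translating from $\log\abs{F_I}$ back to $\log\abs{I(\omega)}$ introduces normalization factors (binomial coefficients and a power of $\norm\omega$) whose logarithm is $O(\deg I)$, producing the stated inequality.

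The principal obstacle is establishing the multiplicativity of Chow forms with the correct exponents $k_j$, i.e.\ matching Nesterenko's algebraic definition of the exponent of a primary component with the multiplicity of the corresponding irreducible factor of $F_I$. This amounts to identifying the primary decomposition of $I$ with the decomposition of the associated projective cycle on $\spec\Q[X]/I$ as a sum with multiplicities, a standard but nontrivial compatibility between commutative algebra and elimination theory; once it is in place, parts~(1)–(3) follow from a short calculation combining multiplicativity with Lemma~\ref{lem:height-prod} and Lemma~\ref{lem:poly-eval-bound}.
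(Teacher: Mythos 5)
The paper does not prove Proposition~\ref{prop:nest-linear}; it is quoted verbatim from Nesterenko \cite{nesterenko:ind-measure} as a black box, so there is no in-paper proof to compare against. On its own merits your Chow-form approach is the right framework and matches how these quantities are actually defined in the source: $\deg I$, $h(I)$ and $\abs{I(\omega)}$ are all read off the Chow form $F_I$, and the central fact $F_I=\pm\prod_j F_{\fp_j}^{k_j}$ is indeed what drives all three items. Part~(1) is exact additivity of degrees, and part~(3) does come out correctly once you trace through the normalization, because the translation from $\log\abs{F_I}$ to $\log\abs{I(\omega)}$ subtracts $h(F_I)$, and the bound $h(F_I)\le\sum_j k_j\,h(F_{\fp_j})+O(\deg I)$ (the Lemma~\ref{lem:height-prod} direction) gives exactly the sign of error you need there.

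However, your treatment of part~(2) has a directional error. You invoke Lemma~\ref{lem:height-prod} to bound the height of the product by the sum of the heights of the factors, i.e.\ $h(F_I)\le\sum_j k_j\,h(F_{\fp_j})+O(\deg I)$. But part~(2) asserts the \emph{opposite} inequality $\sum_j k_j\,h(\fp_j)\le h(I)+O(\deg I)$: you need to bound the heights of the factors from above by the height of the product. That is not what Lemma~\ref{lem:height-prod} gives, and the two inequalities are genuinely different statements. The correct tool is a Gelfond/Mahler-type inequality: the Mahler measure is exactly multiplicative, $M(F_I)=\prod_j M(F_{\fp_j})^{k_j}$, and Mahler measure and $L_\infty$-height of an integer polynomial differ only by a factor $2^{O(\deg)}$ in each variable block, so $\sum_j k_j\,h(F_{\fp_j})\le h(F_I)+O(\deg F_I)$. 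Without that replacement, your proof of~(2) does not go through.
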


Let $P\in\Z[X]$ be homogeneous and $\w\in\C P^n$. We define the
\emph{normalized value} of $P$ at $\w$ to be
$\norm{P}_\w:=\abs{P(\bar\w)}\cdot H(P)^{-1}$ where
$\bar\w\in\C^{n+1}$ is a representative of $\w$ chosen such that its
maximal coordinate has modulus $1$. The following proposition provides
estimates on the degree, height and absolute value at a point $\w$ of
a projective hypersurface in terms of its equation.
\begin{Prop}[\protect{\cite[Proposition~1.3]{nesterenko:ind-measure}}]\label{prop:nest-principal}
  Let $P\in\Z[X]$ be homogeneous and non-zero and set $J:=(P)$. Let
  $\w\in\C P^n$. Then
  \begin{enumerate}
  \item $\deg J=\deg P$.
  \item $h(J) \le \hat h(P)+O(\deg P)$.
  \item $\log \abs{J(\w)} \le \log\norm{P}_\w+O(\deg P)$.
  \end{enumerate}
\end{Prop}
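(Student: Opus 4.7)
The plan is to unwind Nesterenko's definitions of $\deg$, $h$, and the absolute value $\abs{\cdot(\omega)}$ for the principal unmixed ideal $J=(P)$ and verify each of the three estimates by direct computation, reducing everything to the Chow form associated to $J$, which in this case is essentially $P$ itself.

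For part (1), I would argue geometrically. The degree of an unmixed projective ideal of dimension $r$ equals the degree of the $0$-cycle obtained by intersecting its variety with a generic linear subspace of complementary dimension. For $J=(P)$ the variety is the hypersurface $\{P=0\}\subset\C P^n$, and by B\'ezout's theorem a generic projective line meets it in exactly $\deg P$ points counted with multiplicity, yielding $\deg J=\deg P$.

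For parts (2) and (3) I would identify the Chow form $f_J$ of $J$ with (a primitive integer scalar multiple of) $P$ itself, viewed as a polynomial in a single set of $n+1$ projective variables. By construction, $h(J)$ is the logarithmic height of the primitive integer Chow form of $J$; since passing to the primitive form only decreases the coefficient sizes, $h(J)\le h(P)+O(\deg P)$, where the error term absorbs combinatorial factors of size at most $\log\binom{n+d}{d}=O(d)$ built into Nesterenko's specific normalization. Similarly, the absolute value $\abs{J(\omega)}$ is obtained by evaluating a normalized Chow form at a point encoding $\omega$; for a principal ideal this evaluation collapses, up to a combinatorial factor of log-size $O(d)$, to the normalized value $\norm{P}_\omega=\abs{P(\bar\omega)}\cdot H(P)^{-1}$, giving (3).

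The hard part will be matching Nesterenko's precise normalization conventions---the primitive integer representative of the Chow form, the specific scaling used to define the value at $\omega$, and the passage between the maximum-modulus height $H(P)$ and the $L_2$-type norms implicit in the ideal-theoretic definitions---to the naive formulas written in terms of $h(P)$ and $\norm{P}_\omega$. Each discrepancy is controlled by a combinatorial factor whose logarithm is $O(\deg P)$, hence absorbed into the stated error, but careful bookkeeping is required at each step to confirm that no larger contribution slips in.
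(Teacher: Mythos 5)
The paper does not prove this proposition --- it is quoted directly from Nesterenko's \emph{Modular functions and transcendence questions} (as Proposition~1.3 there), so there is no in-paper argument to compare against.

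Your sketch captures the correct overall shape of the argument: part~(1) is indeed B\'ezout for a hypersurface, and parts~(2)--(3) ultimately reduce to the fact that the ideal-theoretic invariants of $(P)$ are controlled by the polynomial $P$ itself up to combinatorial factors of log-size $O(\deg P)$. However, the identification in the middle step is stated imprecisely. The Chow form of a degree-$d$ hypersurface $\{P=0\}\subset\C P^n$ is \emph{not} ``$P$ viewed as a polynomial in a single set of $n+1$ projective variables.'' For an $(n-1)$-dimensional variety, the Chow form depends on $n$ sets of dual variables $u^{(0)},\ldots,u^{(n-1)}$, one for each hyperplane cut needed to reach dimension zero; for a hypersurface it is obtained by substituting into $P$ the intersection point of those $n$ hyperplanes, expressed via Cramer's rule as a tuple of $n\times n$ minors in the $u^{(i)}_j$. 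The resulting multihomogeneous form has coefficients that are polynomial combinations of the coefficients of $P$ with combinatorial weights whose logarithm is $O(\deg P)$, and \emph{that} is what makes the passage from $h(\text{Chow form})$ to $h(P)$, and from $\abs{J(\w)}$ to $\norm{P}_\w$, tight up to the stated error. As you yourself acknowledge, the real content here is bookkeeping against Nesterenko's normalization; the point to correct is that the Chow form of a hypersurface is a genuine multihomogeneous resultant-type object built from $P$, not $P$ itself, and the $O(\deg P)$ term is precisely the cost of that substitution. With that fix your outline is sound.
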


The following proposition allows us to intersect an irreducible
projective variety with a divisor with explicit estimates on the
degree, height and absolute value at a point $\w$.

\begin{Prop}[\protect{\cite[Proposition~1.4]{nesterenko:ind-measure}}]\label{prop:nest-intersect}
  Let $\fp\subset\Q[X]$ be a homogeneous prime ideal with
  $\dim\fp\ge1$. Suppose $Q\in\Z[X]$ is homogeneous and $Q\not\in\fp$.
  Then there exists a homogeneous unmixed ideal $J\subset\Q[X]$ with
  $\dim J=\dim\fp-1$ and $Z(J)=Z((\fp,Q))$ such that
  \begin{enumerate}
  \item $\deg J\le \deg\fp \cdot \deg Q$.
  \item $h(J)\le \deg\fp\cdot \hat h(Q)+h(\fp)\cdot\deg Q+O(\deg\fp\cdot\deg Q)$.
  \item For any $\w\in\C P^n$ we have
    \begin{equation} \label{eq:J-abs}
      \log \abs{J(\w)} \le \log \delta+\deg\fp\cdot \hat h(Q)+h(\fp)\cdot\deg Q+O(\deg\fp\cdot\deg Q)
    \end{equation}
    where
    \begin{equation}\label{eq:J-options}
      \delta=
      \begin{cases}
        \norm{Q}_\w & \text{if }\dist(\w,Z(\fp))<\norm{Q}_\w \\
        \abs{\fp(\w)} & \text{otherwise}
      \end{cases}
    \end{equation}
  \end{enumerate}
  The inequality~\eqref{eq:J-abs} remains true also in the case
  $\dim\fp=0$ if we formally set $J$ to be the irrelevant ideal $(X)$
  and formally set $\abs{J(\w)}=1$.
\end{Prop}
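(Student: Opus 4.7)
The plan is to follow Nesterenko's framework in which the three quantities attached to an unmixed homogeneous ideal $I$ are read off from its Chow form $F_I$: the multidegree of $F_I$ recovers $\deg I$, its logarithmic height equals $h(I)$ up to $O(\deg I)$, and $\log\abs{I(\w)}$ equals (up to the same additive error) the logarithm of $\abs{F_I}$ evaluated at a canonical tuple of linear forms $L_\w$ associated to $\w$. The three parts then become statements about how $F_J$ is built from $F_\fp$ and $Q$.

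First I would define $J$ as the unmixed part of $(\fp,Q)$, namely the intersection of its primary components at minimal primes. Since $\fp$ is prime of dimension $\ge1$ and $Q\notin\fp$, Krull's Hauptidealsatz forces every minimal prime of $(\fp,Q)$ to have dimension $\dim\fp-1$, so $J$ is unmixed of the asserted dimension and $V(J)=V((\fp,Q))$. The degree bound~(i) is then projective Bezout applied to intersecting $V(\fp)$ with the divisor $\{Q=0\}$. For~(ii), I would use the classical resultant-type identity: substituting the coefficient vector of $Q$ into one slot of $F_\fp$ (viewing $Q$ as a degree-$\deg Q$ form in one group of linear-form variables) produces a polynomial that factors as $F_J$ times an extraneous factor of controlled height. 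Since this substitution is of degree $\deg Q$ in the coefficients of $Q$ while preserving the $\deg\fp$-multidegree in the remaining slots, Lemma~\ref{lem:height-det} and a Lemma~\ref{lem:height-prod}-type estimate on the extraneous factor combine to give the bound $h(J)\le\deg\fp\cdot h(Q)+h(\fp)\cdot\deg Q+O(\deg\fp\cdot\deg Q)$.

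Part~(iii) is the main difficulty. The same identity, evaluated at $L_\w$, shows that $\log\abs{J(\w)}$ is bounded (up to the usual error) by $\log\abs{F_\fp(L_\w;Q_*)}$, where $Q_*$ denotes the coefficient vector of $Q$ inserted into one slot. I would then bound this in two complementary ways using the factorization of $F_\fp$ as a product over its geometric roots $\zeta\in V(\fp)$. The contribution of each $\zeta$ is comparable either to $\dist(\w,\zeta)$ (from the other slots) or to a value of $Q$ near $\zeta$ (from the $Q_*$ slot, whose size is dominated by $\norm{Q}_\w$). In the regime $\dist(\w,V(\fp))\ge\norm{Q}_\w$, the $\dist(\w,\zeta)$ factors dominate and one recovers $\abs{\fp(\w)}$; in the regime $\dist(\w,V(\fp))<\norm{Q}_\w$, the factor at the closest root $\zeta$ is forced to be of order $\norm{Q}_\w$, yielding the $\delta=\norm{Q}_\w$ alternative of~\eqref{eq:J-options}.

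The hardest step is precisely this Lojasiewicz-type crossover argument: verifying that in the regime $\dist(\w,V(\fp))<\norm{Q}_\w$, the product of the remaining factors does not reintroduce uncontrolled smallness beyond what has already been absorbed into the $\deg\fp\cdot h(Q)+h(\fp)\cdot\deg Q$ term. For the degenerate case $\dim\fp=0$ the formal conventions on $J$ and $\abs{J(\w)}$ make~\eqref{eq:J-abs} reduce to an elementary application of Lemma~\ref{lem:poly-eval-bound} to $Q$ at points of $V(\fp)$, so no separate argument is needed there.
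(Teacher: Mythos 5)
The paper does not prove this proposition: it is recalled verbatim from Nesterenko's paper (cited as Proposition~1.4 of \cite{nesterenko:ind-measure}), so there is no in-paper proof to compare against. What you are attempting is a reconstruction of Nesterenko's own argument.

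Your general framework --- Chow forms, degree bound via projective B\'ezout, dimension drop via Krull's Hauptidealsatz, and a resultant-type identity relating $F_J$ to $F_\fp$ and the coefficients of $Q$ --- is the right setting, and parts (i) and (ii) as you sketch them are plausible. But part (iii) is not proved; you yourself flag the crossover argument as ``the hardest step'' and then only describe what would need to happen, rather than establishing it. That step is not a verification detail: the entire content of the two-case bound in~\eqref{eq:J-options} lies in showing that, after the resultant/elimination identity is evaluated at $L_\w$, the extraneous factor and the product over the ``far'' roots of $V(\fp)$ can be absorbed into the $\deg\fp\cdot h(Q)+h(\fp)\cdot\deg Q+O(\deg\fp\cdot\deg Q)$ error term in both regimes simultaneously. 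Saying ``the factor at the closest root is of order $\norm{Q}_\w$'' does not by itself control the remaining factors, nor the contribution of the extraneous component of the resultant, and that control is precisely where Nesterenko's proof expends its effort. A secondary imprecision: ``substituting the coefficient vector of $Q$ into one slot of $F_\fp$'' does not literally make sense since the slots of a Chow form take coefficients of linear forms, not of a degree-$\deg Q$ form; the actual identity goes through a resultant construction in one group of variables (or replaces $Q$ by a pencil of power-sum substitutes), and the bookkeeping of degrees and heights in that step is what produces the bilinear $\deg\fp\cdot h(Q)+h(\fp)\cdot\deg Q$ shape. As written, this is a plausible outline but not a proof.
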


The following proposition establishes a relation between the absolute
value of an ideal $I$ at $\w$ and $\dist(\w,Z(I))$.

\begin{Prop}[\protect{\cite[Proposition~1.5]{nesterenko:ind-measure}}]\label{prop:nest-dist}
  Let $I\subset\Q[X]$ be a homogeneous unmixed ideal and
  $r:=\dim I+1$. Let $\w\in\C P^n$. Then
  \begin{equation}
    \deg I\cdot \log\dist(\w,Z(I)) \le \frac1r \log \abs{I(\w)}+ \frac1r h(I)+O(\deg I).
  \end{equation}
\end{Prop}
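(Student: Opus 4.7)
The plan is to reduce the inequality to the case of a homogeneous prime ideal via primary decomposition, and then to prove the prime case by an iterated hyperplane-section argument. The two main tools are already available: Proposition \ref{prop:nest-linear} handles the decomposition into primes, and Proposition \ref{prop:nest-intersect} supplies the degree/height/value bounds after each intersection with a linear form.

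First I would reduce to the case of a prime. Write $I = I_1 \cap \cdots \cap I_s$ in reduced primary form with $\fp_j = \sqrt{I_j}$ of multiplicity $k_j$. By the unmixedness hypothesis every $\fp_j$ has projective dimension $r - 1$, and $V(I) = \bigcup_j V(\fp_j)$ gives $\dist(\w, V(I)) = \min_j \dist(\w, V(\fp_j))$. In the nontrivial range $\dist(\w, V(I)) \le 1$ this yields $\log \dist(\w, V(I)) \le \log \dist(\w, V(\fp_j))$ for every $j$, while the range $\dist(\w, V(I)) > 1$ is absorbed into the $O(\deg I)$ error. Granting the estimate for each $\fp_j$, multiplying by $k_j$, summing, and invoking Proposition \ref{prop:nest-linear} together with $\sum_j k_j \deg \fp_j = \deg I$ then yields the statement for $I$.

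For the prime case, fix a prime $\fp$ of projective dimension $d = r - 1$ and set $\delta := \dist(\w, V(\fp))$. I would induct on $d$, reducing $\fp$ to a zero-dimensional (and eventually to the formal irrelevant) situation by successive intersection with linear forms. Using a Dirichlet/Siegel box-principle argument I would produce a sequence $L_1, \ldots, L_r \in \Z[X]$ of linear forms, each of height $O(\log(1/\delta))$, with $L_i$ not in the current intersection ideal and with $\norm{L_i}_\w$ of order $\delta$ but strictly greater than the distance from $\w$ to the current intersection variety, so that the first branch of \eqref{eq:J-options} is the one activated at every step. After $r$ iterated applications of Proposition \ref{prop:nest-intersect}, which decreases the projective dimension by one per step, one reaches the formal case $\dim J = -1$ where $|J(\w)| = 1$. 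Reading the chain of inequalities backward and using that each application contributes a factor of $\log \norm{L_i}_\w$ comparable to $\log \delta$ to the accumulated value of $\log |\fp(\w)|$, while the degrees and heights grow only by $O(\deg \fp)$ per step (since $\deg L_i = 1$), one recovers the factor $r \deg \fp$ in front of $\log \delta$ on the left-hand side of the proposition.

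The hard part is the simultaneous construction of the linear forms $L_1, \ldots, L_r$. One must arrange each $L_i$ so that (i) its integer coefficients are of size polynomial in $1/\delta$, (ii) $\norm{L_i}_\w$ stays comparable to $\delta$ so that the first branch of \eqref{eq:J-options} is triggered, (iii) $L_i$ avoids membership in the current intersection ideal, and crucially (iv) the choices telescope coherently across the $r$ steps, so that the cumulative errors in height and absolute value combine into precisely $h(\fp) + O(\deg \fp)$ rather than something weaker. In Nesterenko's treatment this is handled by a Siegel-type pigeonhole argument on the coefficient lattice of integer linear forms, orchestrated so that the favourable alternative of the dichotomy \eqref{eq:J-options} is preserved at every level of the induction; balancing the three conflicting requirements on the $L_i$ is the technical crux of the proof.
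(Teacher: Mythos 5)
This proposition is stated in the paper only as a citation to Nesterenko and is not proved there, so the comparison has to be against the internal coherence of your own argument. Your reduction to the prime case via Proposition~\ref{prop:nest-linear}, multiplying the prime inequalities by the multiplicities $k_j$ and summing, is sound, and the iterated-intersection strategy for the prime case is a reasonable plan. The problem is that you have the dichotomy~\eqref{eq:J-options} exactly backwards, and with the branch you select the argument cannot yield the claimed bound.

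You arrange $\norm{L_i}_\w > \dist(\w,V(\fp))$ so that the first alternative $\delta=\norm{L_i}_\w$ is active. But with that alternative Proposition~\ref{prop:nest-intersect} gives $\log\abs{J(\w)} \le \log\norm{L_i}_\w + \deg\fp\cdot h(L_i) + h(\fp) + O(\deg\fp)$, and the quantity $\abs{\fp(\w)}$ appears nowhere; there is no way to extract a lower bound on $\log\abs{\fp(\w)}$ from a chain of such inequalities, however it is ``read backward''. After rearranging the target inequality to $\log\abs{\fp(\w)} \ge r\deg\fp\cdot\log\dist(\w,V(\fp)) - h(\fp) - O(\deg\fp)$, it is a \emph{lower} bound on $\log\abs{\fp(\w)}$ that is required, and only the second (``otherwise'') alternative $\delta=\abs{\fp(\w)}$ provides the coupling: it yields $\log\abs{\fp(\w)} \ge \log\abs{J(\w)} - \deg\fp\cdot h(L_i) - h(\fp) - O(\deg\fp)$, which, combined with Proposition~\ref{prop:nest-linear} and iterated down to the formal terminal case $\abs{J(\w)}=1$, has the right shape. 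To enter that branch one must choose $L_i$ with $\norm{L_i}_\w \le \dist(\w,V(\fp))$ --- at most, not strictly greater than, the current distance. It is then the Dirichlet-produced \emph{height} $h(L_i)=O(\log(1/\delta))$, entering through the term $\deg\fp\cdot h(L_i)$, that delivers the factor $\deg\fp\cdot\log\delta$ per step, not $\log\norm{L_i}_\w$ as you state. Once this is corrected the branch condition even becomes self-propagating: the successive cut varieties sit inside $V(\fp)$ so the distance from $\w$ can only grow, whereas your first-branch requirement, with $\norm{L_i}_\w$ held of order $\delta$, becomes harder to maintain precisely as the distance increases.
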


We are now ready to give the proof of Theorem~\ref{thm:lojas}.
\begin{proof}[Proof of Theorem~\ref{thm:lojas}.]
  For simplicity of the presentation we will prove the first statement
  of the theorem. For the second statement one can reduce to the case
  that $V$ is irreducible over $\Q$, and the proof is essentially the
  same as presented below except that we begin with the ideal
  $\fq_0=I(V)$. We leave the details to the reader.
  
  Denote by $\~P_\alpha\in\Z[x_0,\ldots,x_n]$ the homogenization of
  $P_\alpha$ and let $\tilde\scP:=\{\tilde P_\alpha\}$. Note that
  \begin{equation}
    \hat h(\tilde P_\alpha) = \hat h(P_\alpha) \le h.
  \end{equation}
  Let $\w=\psi(p)$ and denote by $\tilde W$ the Zariski closure of
  $\psi(W)$. We remark that the common zeros of $\tilde\scP$ lie in
  $\tilde W\cup H_\infty$.

  Let $\fq_0:=(0)$. If $W=Z(\fq_0)$ there is nothing to prove.
  Otherwise we choose a polynomial $P_1\in\tilde\scP\setminus\fq_0$.
  Applying Proposition~\ref{prop:nest-principal} we obtain an ideal
  $J_1$ satisfying
  \begin{equation}
    \begin{aligned}
      \deg J_1 &\le O(d), &
      h(J_1) &\le O(d+h), &
      \log \abs{J_1(\w)} &\le \log\norm{P_1}_\w+O(d).
    \end{aligned}
  \end{equation}
  Suppose there exists an associated prime $\fq_1$ of $J_1$ and a
  polynomial $P_2\in\tilde\scP\setminus\fq_1$ such that
  \begin{equation}
    \dist(\w,Z(\fq_1)) < \norm{P_2}_\w.
  \end{equation}
  Then applying Proposition~\ref{prop:nest-intersect} to $\fq_1,P_2$
  and using Proposition~\ref{prop:nest-linear} we obtain an ideal
  $J_2$ satisfying
  \begin{equation}
    \begin{aligned}
      \deg J_2 &\le O(d^2), &
      h(J_2) &\le O(d^2+hd), &
      \log \abs{J_2(\w)} &\le \log\norm{P_2}_\w+O(d^2+hd).
    \end{aligned}
  \end{equation}
  Continuing in this manner we finally obtain a polynomial
  $P_s\in\tilde\scP\setminus\fq_{s-1}$ and $J_s$ with
  $\dim J_s=n-s$ such that
  \begin{equation}\label{eq:js-estimates}
    \begin{aligned}
      \deg J_s &\le d^s, &
      h(J_s) &\le O(d^s+hd^{s-1}), &
      \log \abs{J_s(\w)} &\le \log\norm{P_s}_\w+O(d^s+hd^{s-1})
    \end{aligned}
  \end{equation}
  and such that for any of the associated primes $\fq$ of $J_s$, and
  any $P\in\tilde\scP\setminus\fq$ we have
  $\dist(\w,Z(\fq)) \ge \norm{P}_\w$.

  We construct a rooted tree $T$ whose vertices $v$ are homogeneous
  prime ideals $\fp_v\subset\Q[x_0,\ldots,x_n]$ with associated
  multiplicities $k_v\in\N$. The root is $\fp_{s-1}$, and its children
  are the primary components of $J_s$ with their associated
  multiplicities (or just $J_s$ with multiplicity $1$ if $J_s$ is the
  irrelevant ideal). The tree is constructed recursively as follows.
  \begin{itemize}
  \item If $v$ is a vertex and $Z(\fp_v)\subset W\cup H_\infty$ or
    $\fp_v$ is the irrelevant ideal we declare it a leaf.
  \item Otherwise we choose an arbitrary polynomial $P_v\in\tilde\scP$
    satisfying $P_v\not\in\fp_v$. We remark that by construction we
    automatically have
    \begin{equation}\label{eq:good-pv-dist}
      \dist(\w,Z(\fp_v)) \ge \norm{P_v}_\w.
    \end{equation}
    We let $J_v$ denote the ideal constructed in
    Proposition~\ref{prop:nest-intersect} and $\fp_j,k_j$ the
    components of its primary decomposition. We define the
    children of $v$ to be the $\fp_j$ with associated multiplicities
    $k_v k_j$. If $J_v$ is the irrelevant ideal we define $J_v$ itself
    to be the single child of $v$, with multiplicity $1$.
  \end{itemize}

  Denote by $T_r$ the set of vertices whose ideals have dimension
  $n-r$ in $T$ and by $\cL$ the set of leafs, excluding irrelevant
  ideals. A simple induction starting with~\eqref{eq:js-estimates} and
  using Propositions~\ref{prop:nest-linear}
  and~\ref{prop:nest-intersect} gives for $r=s,\ldots,n$
  \begin{align}
    \sum_{v\in T_r} k_v \deg \fp_v &\le d^r \\
    \sum_{v\in T_r} k_v h(\fp_v) &\le O(h d^{r-1} +d^r).
  \end{align}
  Moreover, recursive expansion of $\abs{J_s(\w)}$
  in~\eqref{eq:js-estimates} using Propositions~\ref{prop:nest-linear}
  and~\ref{prop:nest-intersect}, taking into
  account~\eqref{eq:good-pv-dist}, gives
  \begin{equation}
    \log\norm{P_s}_\w \ge \sum_{v\in\cL} k_v\log\abs{\fp_v(\w)}-O(d^{n+1}+hd^n).
  \end{equation}
  We note that for any $v\in\cL$ we have
  $\dist(\w,Z(\fp_v))\le\dist(\w,\tilde W\cup H_\infty)$. Thus
  applying Proposition~\ref{prop:nest-dist} we have
  \begin{equation}
    \log\norm{P_s}_\w \ge n d^n\cdot \log\dist(\w,\tilde W\cup H_\infty)-O(d^{n+1}+hd^n).
  \end{equation}
  Finally noting that $\norm{P_s}_\w \le\e$ gives the statement
  of the theorem.
\end{proof}

\subsection{The $\mu$-elimination minors}
\label{sec:mu-minors}

Let $\cD\subset\N^n$ be an ideal. We denote by
$\cR^\cD\subset\Q[x_1,\ldots,x_n]$ the $\Q$-span of all monomials
lying \emph{outside} $\cD$, and by
$\cR^\cD_{\le d}:=\cR^\cD\cap\cP_{\le d}$. We write
$\rho=\rho(d):=\dim_\Q \cR^\cD_{\le d}$. It is known that
$\rho\sim d^\kappa$ where $\kappa$ is a natural number (see
\cite[Proposition~12.2]{eisenbud:ca} and the paragraph that follows
it; in fact $\rho$ agrees with a polynomial in $d$ for sufficiently
large $d$). In particular if $\cD=\LT(I)$ for an ideal
$I\subset\Q[x_1,\ldots,x_n]$ then $\kappa=\dim Z(I)$ (see
\cite[Theorem~12.4]{eisenbud:ca}. We assume that $\kappa\ge1$.

Let $\mu:=\mu(d)\in\N$ be an arbitrary function satisfying
$\mu(d)>\rho(d)$. Below when $d$ is clear from the context we write
$\mu$ for $\mu(d)$ to simplify the notation. We define a linear map
\begin{equation}
  T^\cD_d:\cR^\cD_{\le d}\to \Q[x_1,\ldots,x_n]^{\oplus (\mu+1)}, \qquad T^\cD_d(P)=(P,\xi P,\ldots,\xi^\mu P).
\end{equation}
We represent $T^\cD_d$ with respect to the monomial basis of
$\cR^\cD_{\le d}$ as a $\rho\times(\mu+1)$ matrix with entries in
$\Z[x_1,\ldots,x_n]$. We let $\scM^\cD_d$ denote the set of all
top-dimensional minors of this matrix, called \emph{$\mu$-elimination
  minors}. The following is obvious.

\begin{Lem}\label{lem:mu-minor-vanishing}
  Let $p\in\C^n$. Then $p$ is a common zero of $\scM^\cD_d$ if and
  only if there exists a non-zero $P\in\cR^\cD_{\le d}$ such that
  $P,\ldots,\xi^\mu P$ vanish at $p$.
\end{Lem}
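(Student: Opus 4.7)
The statement is a direct translation between the rank condition on the matrix representing $T^\cD_d$ and the existence of a non-trivial element of $\cR^\cD_{\le d}$ whose first $\mu+1$ iterated derivatives vanish at $p$. My plan is to spell out this translation explicitly.

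First, I would fix notation for the matrix. Let $e_1,\ldots,e_\rho$ be the monomial basis of $\cR^\cD_{\le d}$ (i.e.\ the monomials of degree $\le d$ lying outside $\cD$). With respect to this basis on the source, the map $T^\cD_d$ is represented by the matrix $M(x)\in\Mat_{(\mu+1)\times\rho}(\Z[x_1,\ldots,x_n])$ whose $(j,i)$ entry is the polynomial $\xi^j e_i$, for $0\le j\le \mu$ and $1\le i\le \rho$. By hypothesis $\mu+1>\rho$, so the top-dimensional minors are the $\rho\times\rho$ sub-determinants, and $\scM^\cD_d$ is precisely the set of these.

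Next I would apply the standard linear-algebra fact (Cauchy--Binet or just the characterization of rank by vanishing of maximal minors): the evaluated matrix $M(p)\in\Mat_{(\mu+1)\times\rho}(\C)$ has rank strictly less than $\rho$ if and only if every $\rho\times\rho$ minor of $M(p)$ vanishes, i.e.\ if and only if $p$ is a common zero of all elements of $\scM^\cD_d$.

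Finally I would translate the rank deficiency into the statement of the lemma. The columns of $M(p)$ are $\C$-linearly dependent iff there exist scalars $c_1,\ldots,c_\rho\in\C$, not all zero, with
\begin{equation}
\sum_{i=1}^{\rho} c_i (\xi^j e_i)(p) = 0 \quad \text{for every } 0\le j\le\mu.
\end{equation}
Setting $P:=\sum_i c_i e_i\in\cR^\cD_{\le d}$ and using the $\C$-linearity of $\xi^j$, this is equivalent to $P\neq 0$ together with $(\xi^j P)(p)=0$ for $j=0,\ldots,\mu$. Combining the two equivalences yields the lemma.

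There is no genuine obstacle here; the content is a routine unpacking of the definition of $\scM^\cD_d$. The only point that requires a moment's care is keeping track of the two meanings of ``matrix'' — as a polynomial-valued object used to define $\scM^\cD_d$, and as its numerical evaluation at $p$ whose rank encodes the existence of $P$ — but this is handled automatically once the matrix $M(x)$ is written out.
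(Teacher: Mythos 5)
Your proof is correct and is exactly the routine linear-algebra unpacking that the paper has in mind; the paper itself offers no argument (it introduces the lemma with ``The following is obvious''), so your write-up supplies the implicit content. The only thing worth noting is that you transposed the paper's matrix convention (the paper writes $T^\cD_d$ as a $\rho\times(\mu+1)$ matrix, you use $(\mu+1)\times\rho$), which is immaterial since rank and maximal minors are transpose-invariant.
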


Next, we estimate the degree and height of the minors in $\scM^\cD_d$.

\begin{Lem}\label{lem:minor-deg-height}
  For every $M\in\scM^\cD_d$, we have
  \begin{equation}
    \begin{aligned}
      \deg M &\le O(d^\kappa \mu) \\
      h(M) &\le O(d^\kappa \mu\log\mu)
    \end{aligned}
  \end{equation}
\end{Lem}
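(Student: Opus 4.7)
The plan is to unwind the definition of $T^\cD_d$ and apply the routine height/degree arithmetic from Lemmas~\ref{lem:xi-height} and~\ref{lem:height-det}. First I would observe that each entry of the $\rho\times(\mu+1)$ matrix representing $T^\cD_d$ is of the form (a coefficient of) $\xi^k m_i$, where $m_i$ is a monomial of degree $\le d$ (one of the $\rho$ basis monomials of $\cR^\cD_{\le d}$) and $0\le k\le\mu$. Strictly speaking the entries are themselves polynomials in $\Z[x_1,\ldots,x_n]$, since the codomain $\Q[x_1,\ldots,x_n]^{\oplus(\mu+1)}$ is viewed as a free module of rank $\mu+1$ over the polynomial ring; hence a top-dimensional minor $M\in\scM^\cD_d$ is a $\rho\times\rho$ determinant whose entries are polynomials of this form.

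Next I would bound these entries uniformly. Since $m_i$ is a monomial, $h(m_i)=0$ and $\deg m_i\le d$. Each application of $\xi$ raises the degree by at most $\delta-1=O(1)$, so $\deg(\xi^k m_i)\le d+k(\delta-1)=O(d+\mu)$. Lemma~\ref{lem:xi-height} then gives
\begin{equation*}
  h(\xi^k m_i) \le O\bigl(d+k\log(d+k)\bigr) = O\bigl(d+\mu\log\mu\bigr).
\end{equation*}
Because we assumed $\kappa\ge1$ we have $\mu>\rho\sim d^\kappa\ge cd$ for $d$ large, so $d=O(\mu)$ and both bounds simplify to $\deg=O(\mu)$ and $h=O(\mu\log\mu)$.

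Now I would apply Lemma~\ref{lem:height-det} to the $\rho\times\rho$ submatrix underlying $M$, with the parameters $d'=O(\mu)$ and $h'=O(\mu\log\mu)$ just derived and with $\rho=O(d^\kappa)$. This yields
\begin{equation*}
  \deg M \le \rho\cdot d' = O(d^\kappa \mu), \qquad h(M) \le \rho h'+O(\rho d') = O(d^\kappa\mu\log\mu),
\end{equation*}
which is exactly the claimed estimate.

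This is essentially a bookkeeping proof and I do not anticipate any real obstacle; the only point requiring care is keeping in mind that the matrix entries are themselves polynomials (so that the cited height-of-determinant lemma applies) and the routine step of absorbing $d$ into $\mu$ using the hypothesis $\mu>\rho\sim d^\kappa$ with $\kappa\ge1$.
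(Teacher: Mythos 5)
Your proof is correct and follows the same route as the paper's: identify the matrix entries as $\xi^k m_\alpha$ for monomials $m_\alpha$ of degree at most $d$ and $k\le\mu$, bound their degree and height via Lemma~\ref{lem:xi-height}, absorb $d$ into $\mu$ using $\kappa\ge1$, and then apply Lemma~\ref{lem:height-det} with $\rho\sim d^\kappa$. The only extra care you take (spelling out why $d=O(\mu)$) is left implicit in the paper, so the two proofs are essentially identical.
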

\begin{proof}
  Recall the $M$ is the determinant of a $\rho\times\rho$ matrix $A$
  whose entries are of the form $A_{\alpha,k}:=\xi^k x^\alpha$ where
  $k\le\mu$ and $x^\alpha$ is a monomial of degree bounded by $d$.
  Using Lemma~\ref{lem:xi-height} we have
  \begin{equation} \label{eq:A-deg-height}
    \begin{aligned}
      \deg A_{\alpha,k} &\le O(\mu) \\
      h(A_{\alpha,k}) &\le O(\mu \log\mu).
    \end{aligned}
  \end{equation}
  Recalling that $\rho\sim d^\kappa$ and using Lemma~\ref{lem:height-det}
  gives the statement of the lemma.
\end{proof}

Finally we show that if a minor in $\scM^\cD_d$ is large at a point,
then every polynomial from $\cR^\cD_d$ has a large $\xi$-derivative
there. Below we use the notation $\log_0 x:=\max(\log x,0)$.

\begin{Lem}\label{lem:minor-v-abs}
  Let $p\in\C^n,M\in\scM^\cD_d$ and suppose that $\abs{M(p)}=\e>0$.
  Then for every polynomial $P\in\cR^\cD_d$ there exists an
  index $0\le k\le\mu$ such that
  \begin{equation}
    \log \abs{\xi^k P(p)} \ge \log\norm{P}+\log\e-O(d^\kappa\mu(\log\mu+\log_0\norm{p}))
  \end{equation}
\end{Lem}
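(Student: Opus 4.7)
The plan is to use Cramer's rule: since $M$ is the determinant of a $\rho\times\rho$ submatrix of the evaluated operator $T^\cD_d$, one can recover the coefficients of any $P\in\cR^\cD_{\le d}$ (in the monomial basis) from the values $\xi^{k_j}P(p)$ with $k_j$ ranging over the column set $K\subset\{0,\ldots,\mu\}$ defining $M$. Quantitative control of these cofactors, via Lemma~\ref{lem:minor-deg-height} and Lemma~\ref{lem:poly-eval-bound}, will convert a lower bound on $\|P\|$ into a lower bound on $\max_j|\xi^{k_j}P(p)|$.

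More precisely, let $A^K$ denote the $\rho\times\rho$ submatrix of the $\rho\times(\mu+1)$ matrix representing $T^\cD_d$ obtained by restricting to the columns indexed by $K$, so $M=\det A^K$. If $P=\sum_\alpha c_\alpha x^\alpha\in\cR^\cD_{\le d}$, then
\begin{equation*}
  \bigl(\xi^{k_j}P(p)\bigr)_{k_j\in K} = (A^K(p))^T\, c,
\end{equation*}
and since $\det(A^K(p))^T=M(p)=\e\neq 0$, Cramer's rule yields $c_\alpha = \det(B_\alpha)/\e$, where $B_\alpha$ is obtained from $(A^K(p))^T$ by replacing its $\alpha$-th column by the vector $v_j:=\xi^{k_j}P(p)$.

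Expanding $\det(B_\alpha)$ along this replaced column gives $\det(B_\alpha)=\sum_j \pm v_j\cdot N_{\alpha,j}(p)$, where each $N_{\alpha,j}$ is the $(\rho-1)\times(\rho-1)$ polynomial cofactor of $A^K$. The estimates of Lemma~\ref{lem:minor-deg-height} apply verbatim to these cofactors (the dimension drops from $\rho$ to $\rho-1$), so $\deg N_{\alpha,j}=O(d^\kappa\mu)$ and $h(N_{\alpha,j})=O(d^\kappa\mu\log\mu)$. Plugging into Lemma~\ref{lem:poly-eval-bound} yields
\begin{equation*}
  \log|N_{\alpha,j}(p)| = O\bigl(d^\kappa\mu(\log\mu+\log\norm{p})\bigr),
\end{equation*}
using that $\log\mu\ge\log\rho=\Omega(\kappa\log d)$ to absorb the $n\log(\deg N_{\alpha,j})$ term.

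Therefore
\begin{equation*}
  |c_\alpha|\le \frac{\rho\cdot \max_j|\xi^{k_j}P(p)|\cdot \exp\!\bigl(O(d^\kappa\mu(\log\mu+\log\norm{p}))\bigr)}{\e}.
\end{equation*}
Combining this with the elementary inequality $\norm{P}\le\sqrt{\rho}\max_\alpha|c_\alpha|$ and taking logarithms, the factors $\rho^{3/2}$ are absorbed in the error term, and one obtains
\begin{equation*}
  \log\max_j|\xi^{k_j}P(p)|\ge \log\norm{P}+\log\e - O\bigl(d^\kappa\mu(\log\mu+\log\norm{p})\bigr),
\end{equation*}
so the desired $k\in\{0,\ldots,\mu\}$ may be taken to be the $k_j$ achieving this maximum. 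There is no genuine obstacle here: the argument is a standard Cramer's rule inversion, and the only step requiring care is tracking the cofactor size estimates so that the error stays within the stated $O(d^\kappa\mu(\log\mu+\log\norm{p}))$ budget.
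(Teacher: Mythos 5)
Your proposal is correct and is essentially the same argument as the paper's: the paper normalizes $\norm{P}=1$ and then invokes the adjugate-based norm inequality $\norm{A_p^{-1}}\le\norm{A_p}^{\rho-1}/|\det A_p|$ together with the bound $\norm{A_p}\le e^{O(\mu\log\mu)}\norm p^{O(\mu)}$, which is exactly the Cramer's-rule/cofactor estimate you spell out explicitly. The only difference is packaging (matrix norm inequality versus explicit cofactor expansion); the underlying mechanism and the error budget accounting are identical.
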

\begin{proof}
  We use the notations of the proof of
  Lemma~\ref{lem:minor-deg-height}. Denote by $A_p$ the matrix
  obtained by evaluating the entries of $A$ at $p$, so that
  $\abs{\det A_p}=\e$. We may suppose that $\norm{P}=1$. Let
  $\delta=\max_{\{k_j\}}\abs{\xi^{k_j}P(p)}$ where $\{k_j\}$ is the
  set of derivatives appearing in the matrix $A$ defining $M$. Then
  $\norm{A_p P}\le \sqrt\rho \delta$ so
  $\norm{A_p^{-1}}\ge (\sqrt\rho \delta)^{-1}$.

  On the other hand using~\eqref{eq:A-deg-height} and
  Lemma~\ref{lem:poly-eval-bound} we have
  \begin{equation}
    \norm{A_p}\le e^{O(\mu\log\mu)}\max(\norm{p},1)^{O(\mu)}.
  \end{equation}
  The following inequality (see e.g. \cite[(1.1)]{schaffer:norms})
  follows easily from the singular value decomposition of $A$
  \begin{equation}
    \norm{A_p^{-1}} |\det A_p| \le \norm{A_p}^{\dim A-1}.
  \end{equation}
  Using $\dim A=\rho\sim d^\kappa$ we have
  \begin{equation}
    \begin{aligned}
      \rho^{-1/2}\delta^{-1} \e &\le  \norm{A_p^{-1}} |\det A_p| \le \norm{A_p}^{\dim A-1} \\
      &\le e^{O(d^\kappa \mu\log\mu)}\max(\norm{p},1)^{O(d^\kappa \mu)}
    \end{aligned}
  \end{equation}
  from which it follows that
  \begin{equation}
    \delta > e^{-O(d^\kappa \mu\log\mu)}\max(\norm{p},1)^{-O(d^\kappa \mu)} \e
  \end{equation}
  as claimed.
\end{proof}

\section{Proof of the main theorem}
\label{sec:main-proof}

In this section we give a proof of the main theorem. We fix $\xi,V$ as
in~\secref{sec:intro}, with $\K=\Qa$.

\subsection{Universal lower bounds for $\xi$-derivatives}

The following theorem is the main technical ingredient in the proof of
the main theorem.

\begin{Thm}\label{thm:derivative-bound}
  Assume $\xi$ has constructible orbits in $V$. Fix $p\in V$ and
  denote $m:=\dim V$ and $\kappa:=\dim\Ob_p$. There exists a constant
  $C_p$ such that for any polynomial $P\in\cP_{\le d}$ there exists a
  polynomial $R\in\cP_{\le d}$ satisfying
  $R\rest{\Ob_p}\equiv P\rest{\Ob_p}$ and an index
  $0\le k\le C_p d^\kappa$ such that
  \begin{equation}
    \log \abs{\xi^k R(p)} \ge \log\norm{R}-O(d^{2\kappa(m+1)}\log d).
  \end{equation}
  Here the asymptotic constants may depend on $p$ as well.
\end{Thm}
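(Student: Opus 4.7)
The approach is to combine the constructible orbits hypothesis (through a Grobner normal form that works simultaneously for the orbit closures $\Ob_q$ at all $q$ near $p$) with the $\mu$-elimination minors of Section~\ref{sec:mu-minors} and the Diophantine \Lojas.\ inequality of Corollary~\ref{cor:lojas}. The outline in~\secref{sec:intro} already supplies the skeleton; what follows is a more precise plan.

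First I would apply Proposition~\ref{prop:V-grobner-reduct} to obtain a $\xi$-invariant $\K$-subvariety $V'\subset V$ of dimension $m'\le m$ and a Zariski open dense subset $U\subset V'$ containing $\O_p$ on which the Grobner diagram $\cD:=\LT(I_{\Ob_q})$ is independent of $q$. After clearing denominators (absorbed into $C_p$) I may assume $\xi$ and $V'$ are defined over $\Z$. Given $P\in\cP_{\le d}$, I use a Grobner basis of $I_{\Ob_p}$ together with the multivariate division Proposition~\ref{prop:grobner-division} to reduce $P$ modulo $I_{\Ob_p}$ and produce $R\in\cR^\cD_{\le d}$ with $R\equiv P\pmod{I_{\Ob_p}}$, so that $R\rest{\Ob_p}\equiv P\rest{\Ob_p}$. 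This $R$ is the polynomial claimed in the theorem.

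Next I set $\mu:=C_p d^\kappa$ with $C_p$ large enough that Theorem~\ref{thm:mult-refined} applies uniformly to every orbit $\Ob_q$ with $q\in U$; such a uniform constant exists because constructible orbits have uniformly bounded degree (Proposition~\ref{prop:orbit-ideal-deg}). With this choice, any nonzero $S\in\cR^\cD_{\le d}$ whose first $\mu$ $\xi$-derivatives vanish at some $q\in U$ would vanish identically on $\Ob_q$ and hence lie in $I_{\Ob_q}$; but since $\LT(I_{\Ob_q})=\cD$ no nonzero element of $\cR^\cD_{\le d}$ can lie in $I_{\Ob_q}$. By Lemma~\ref{lem:mu-minor-vanishing} the common zero locus $W\subset V'$ of the minors $\scM^\cD_d$ is therefore disjoint from $U$, so $\dist(\psi(p),\psi(W)\cup H_\infty)$ is bounded below by a positive constant depending only on $p$.

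Finally, Lemma~\ref{lem:minor-deg-height} gives $\deg M=O(d^{2\kappa})$ and $h(M)=O(d^{2\kappa}\log d)$ for every $M\in\scM^\cD_d$. Applying Corollary~\ref{cor:lojas} with ambient variety $V'$ of dimension $m'\le m$ yields some $M_0\in\scM^\cD_d$ with $\log\abs{M_0(p)}\ge-O(d^{2\kappa(m'+1)}\log d)\ge-O(d^{2\kappa(m+1)}\log d)$. Feeding $M_0$ and $R$ into Lemma~\ref{lem:minor-v-abs} produces an index $0\le k\le\mu=O(d^\kappa)$ together with
\begin{equation*}
  \log\abs{\xi^k R(p)}\ge\log\norm{R}-O(d^{2\kappa(m+1)}\log d),
\end{equation*}
since the auxiliary error $O(d^\kappa\mu(\log\mu+\log\norm{p}))=O(d^{2\kappa}\log d)$ is of lower order. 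The main obstacle is to verify that the constants absorbed into $C_p$ --- the uniform multiplicity bound over $U$, the positive lower bound on $\dist(\psi(p),\psi(W)\cup H_\infty)$, and the integer denominator needed to descend from $\Qa$ to $\Z$ --- are genuinely independent of $d$ and $P$; one also has to check that the exponent $2\kappa(m+1)$ emerges correctly from combining the $d^{m'}$ factor in Corollary~\ref{cor:lojas} with the $O(d^{2\kappa}\log d)$ degree and height of the minors.
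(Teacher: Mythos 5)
Your outline follows the paper's proof step for step: Proposition~\ref{prop:V-grobner-reduct} to freeze the Gr\"obner diagram on an open $U\supset\O_p$, Proposition~\ref{prop:grobner-division} to replace $P$ by the normal form $R\in\cR^\cD_{\le d}$, Theorem~\ref{thm:mult-refined} with uniformity over $q\in U$ to set $\mu=C_pd^\kappa$, disjointness of $W=V(\scM^\cD_d)$ from $U$ via Lemma~\ref{lem:mu-minor-vanishing}, Corollary~\ref{cor:lojas} to produce a large minor, and Lemma~\ref{lem:minor-v-abs} to translate that into a lower bound on some $\xi^kR(p)$. The exponent bookkeeping $(O(d^{2\kappa}))^{m'}\cdot O(d^{2\kappa}\log d)=O(d^{2\kappa(m'+1)}\log d)$ is also correct, and you rightly note that the $\mu$-minor error term is of lower order.

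There is, however, one genuine gap you gesture at but misdiagnose: the reduction to a $\Z$-defined system. You write that ``clearing denominators'' brings you from $\Qa$ to $\Z$ and later list ``the integer denominator needed to descend from $\Qa$ to $\Z$'' as the thing to check. Clearing denominators passes from $\Q$ to $\Z$, but the hypothesis is only $\K=\Qa$, so in general $\xi$ and $V'$ live over a proper number field $\L\supsetneq\Q$. In that case the minors $\scM^\cD_d$ have coefficients in $\O_\L$, not $\Z$, and Corollary~\ref{cor:lojas} is stated for $\scP\subset\Z[x_1,\ldots,x_n]$ with $V$ a $\Q$-variety; it simply does not apply. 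The paper resolves this by a Galois descent: pick a primitive element $\alpha$ with $\L=\Q(\alpha)$, adjoin a new coordinate $y$, replace $V'$ by the union of the $\Gal(\L/\Q)$-conjugates of $V'\times\{\alpha\}\subset\C^{n+1}$ (a $\Q$-variety of the same dimension $m'$), and extend $\xi$ to a $\Q$-vector field $\xi'$ by rewriting its $\L$-coefficients as rational functions of $\alpha$, substituting $y$, and setting $\xi'(y)=0$. Only after this step does ``clear denominators'' land you in $\Z$. This is not a constant to absorb into $C_p$; it is a structural reduction, and without it the Diophantine \Lojas. inequality in the middle of your argument has no valid input.
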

\begin{proof}
  Recall the notations of~\secref{sec:mu-minors}. Denote
  $\cD:=\LT(I_{\Ob_p})$. We will in fact prove a somewhat refined
  conclusion: that one can choose $R$ in the conclusion of the theorem
  such that $R\in\cR^\cD_{\le d}\subset\cP_{\le d}$.

  We first reduce to the case where $\xi$ is defined over $\Q$.
  Suppose that $\xi$ is defined over a finite extension $\L$ of $\Q$.
  By the primitive element theorem \cite[Theorem~14]{jacobson} we may
  have $\L=\Q(\alpha)$ for some $\alpha\in\L$. We introduce an extra
  variable $y$ and let $\xi_0$ denote the $\Q$-vector field obtained
  from $\xi$ by expressing each $\L$-coefficient of $\xi$ as a
  $\Q$-rational function in $\alpha$ and substituting $y$ for
  $\alpha$, and letting $\xi_0(y)=0$. By construction $\{y=\alpha\}$
  is invariant under $\xi_0$ and $\xi_0\rest{y=\alpha}\equiv\xi$, so
  that the orbit structure of $\xi_0$ on $\{y=\alpha\}$ is the same as
  that of $\xi$.
  
  We let $D\in\Z[y]$ denote the common denominator of the coefficients
  of $\xi_0$ so that $\xi_1=D\cdot\xi_0$ has polynomial coefficients
  over $\Z$. Note that $D(\alpha)\neq0$ by construction, and hence
  $\xi_0,\xi_1$ are non-zero constant multiples of each other on
  $\{y=\alpha\}$ and thus have the same orbit structure there. We set
  $p_1:=(p,\alpha)$ and $V_1:=V\times\{\alpha\}$. In particular,
  $p_1\in V_1$ and $\xi_1$ has constructible orbits in $V_1$.

  We claim that it will suffice to prove the claim for $\xi_1,V_1$. We
  first note that $m,\kappa$ are the same in this case. We choose the
  monomial ordering $\prec$ such that $y$ precedes $x_1,\ldots,x_n$.
  Then since $I_{\Ob_{p_1}}$ contains the polynomial $y-\alpha$, its
  Gr\"obner diagram $\cD_1$ then contains all monomials containing $y$.
  It also contains the Gr\"obner diagram of
  $I_{\Ob_p}\subset I_{\Ob_{p_1}}$. Let $R_1\in\cR^{\cD_1}_{\le d}$
  and $k$ be as in the conclusion of the theorem. Then in fact
  $R_1\in\cR^\cD_{\le d}\cap\C[x_1,\ldots,x_n]$, and
  \begin{multline}
    \log \abs{\xi^k R_1(p)} = \log \abs{\xi_0^k R_1(p,\alpha)} = \log \abs{D(\alpha)^{-k} \xi_1^k R_1(p,\alpha)} \\
    \ge -k\log|D(\alpha)|+\log\norm{R_1}-O(d^{2\kappa(m+1)}\log d).
  \end{multline}
  Using $k=O(d^\kappa)$ and $|D(\alpha)|=O(1)$ we get the conclusion
  of the theorem for $\xi,V$.
  
  Next, we reduce to the case that $V$ is also defined over $\Q$.
  Suppose $V$ is defined over a finite extension of $\L$ of $\Q$. The
  Galois conjugates $\sigma V$ for $\sigma\in\Gal(\L/\Q)$ are also
  $\xi$-invariant,
  \begin{equation}
    \xi (I_{\sigma V}) = \xi (\sigma I_V) = \sigma(\xi I_V)\subset\sigma I_V=I_{\sigma V}
  \end{equation}
  where we used our assumption that $\xi$ is defined over $\Q$ and
  hence commutes with $\sigma$. The union of the (finitely many)
  Galois conjugates of $V$ is defined over $\Q$ \cite[III.4, Theorem
  10]{lang:ag} and $\xi$-invariant, and it will suffice to prove the
  conclusion with $V$ replaced by this union.
     
  By Proposition~\ref{prop:V-grobner-reduct}, after replacing $V$ by
  its $\Q$-subvariety $V'$, we may assume without loss of generality
  that there exists a Zariski open $U\subset V$ containing $\O_p$ such
  that the Gr\"obner diagram $\LT(I_{\Ob_q})=\cD$ for $q\in U$. Since
  the dimension and degree of an affine variety are determined by its
  Gr\"obner diagram, we see that $\dim \Ob_q=\kappa$ and $\deg \Ob_q$ is
  constant for $q\in U$. In the notations of
  Theorem~\ref{thm:mult-refined} we have $\Ob_q=\overline{\gamma_q}$
  and we may therefore choose a constant $C$ such that for every
  $q\in U$ and every $Q\in\cP_{\le d}$, if $Q\rest{\gamma_q}\not\equiv0$
  then $\mult_{\gamma_q} Q \le C d^\kappa$. Set $\mu(d):=C d^\kappa$.

  If $Q\in\cR^\cD_{\le d}$ is non-zero and $q\in U$ then since
  $\supp Q\cap\cD=\emptyset$ we have $Q\not\in I_{\Ob_q}$. Thus by the
  above $\mult_{\gamma_q}Q\le\mu$. If we denote $W:=Z(\scM^\cD_d)$
  then it follows from Lemma~\ref{lem:mu-minor-vanishing} that
  $W\cap U=\emptyset$. Since $U$ is open, we deduce that
  $\dist\(\psi(p),\psi(W)\cup H_\infty\)$ is bounded from below by a
  constant \emph{independent of $d$}. By
  Lemma~\ref{lem:minor-deg-height} we also have for every
  $M\in\scM^\cD_d$
  \begin{equation}
    \begin{aligned}
      \deg M &\le O(d^{2\kappa}) \\
      h(M) &\le O(d^{2\kappa}\log d)
    \end{aligned}
  \end{equation}
  Applying now Corollary~\ref{thm:lojas} we see that there exists a
  minor $M\in\scM^\cD_d$ such that $\e:=\log\abs{M(p)}$ satisfies
  \begin{equation}
    \log\e\ge -O(d^{2\kappa(m+1)}\log d).
  \end{equation}
  
  Recall that by Proposition~\ref{prop:grobner-division} we may choose
  a polynomial $R\in\cP_{\le d}$ satisfying
  $\supp R\subset \LT(\cP_{\le d})\setminus\cD$, i.e. $R\in\cR^\cD_{\le d}$, and
  $R\rest{\Ob_p}\equiv P\rest{\Ob_p}$. Lemma~\ref{lem:minor-v-abs} now
  implies that there exists $0\le k\le\mu$ such that
  \begin{equation}
    \begin{aligned}
      \log \abs{\xi^k R(p)} &\ge \log\norm{R}+\log\e-O(d^{2\kappa}(\log d+\log_0 \norm{p}) \\
      &\ge \log\norm{R}-O(d^{2\kappa(m+1)}\log d)
    \end{aligned}
  \end{equation}
  as claimed.
\end{proof}

\subsection{Lower bounds and zero counting for parametrized $\xi$-trajectories}

We begin with a simple lower bound for the maximum modulus of a
polynomial on a parametrized trajectory in terms of its
$\xi$-derivatives.

\begin{Lem}\label{lem:traj-max-estimate}
  Let $\phi:\bar D\to V$ be a parametrized $\xi$-trajectory and $z_0\in D$,
  and set $p:=\phi(z_0)$. Then for any polynomial $P\in\cP$
  and any $k\in\N$ we have
  \begin{equation}\label{eq:modulus-v-xi}
    \log \max_{\bar D} \abs{P\circ\phi} \ge \log \abs{\xi^k P(p)}-O(k\log k)
  \end{equation}
  where the asymptotic constants may depend on $p$.
\end{Lem}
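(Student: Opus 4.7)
The plan is to reduce the estimate to a Cauchy inequality on a one-variable holomorphic function obtained by reparametrizing $\phi$ by the natural flow time of $\xi$. The essential identity is that if $\gamma(t)$ denotes the time-parametrized $\xi$-flow through $p$, then $F(t) := P(\gamma(t))$ satisfies $F^{(k)}(0) = (\xi^k P)(p)$, as one checks by induction using $F'(t) = dP_{\gamma(t)}(\xi(\gamma(t))) = (\xi P)(\gamma(t))$.

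First I would dispose of the trivial cases. If $p \in \Sing\xi$, then $\xi(p) = 0$ gives $(\xi P)(p) = 0$, and inductively $(\xi^k P)(p) = 0$ for all $k \ge 1$; the case $k = 0$ is immediate from $\max_{\bar D}|P\circ\phi| \ge |P(p)|$. So I may assume $p \notin \Sing\xi$ (and $\phi$ non-constant). Near $p$ I would choose a local analytic flow-box in $V$, yielding a holomorphic $\gamma$ and the associated $F$ as above. Since $\phi(\bar D)$ lies in the leaf through $p$, there is a unique holomorphic function $\tau$ defined near $z_0$ with $\tau(z_0) = 0$ such that $\phi(z) = \gamma(\tau(z))$, whence $f(z) := P(\phi(z)) = F(\tau(z))$.

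Next I apply the open mapping theorem. Fix $r \in (0, 1 - |z_0|)$ so that $\bar D_r(z_0) \subset \bar D$; since $\tau$ is non-constant holomorphic on $D_r(z_0)$ with $\tau(z_0) = 0$, its image contains a closed disc $\bar D_{R_0}(0)$ for some $R_0 > 0$ depending on $\phi, p, z_0$ but not on $k$ or $P$. Every $t$ with $|t| \le R_0$ equals $\tau(z)$ for some $z \in D_r(z_0)$, so
\begin{equation*}
  \max_{\bar D}|f| \;\ge\; \max_{z \in D_r(z_0)}|f(z)| \;\ge\; \max_{|t| \le R_0}|F(t)|.
\end{equation*}
The standard Cauchy inequality applied to $F$ on the disc of radius $R_0$ then gives $\max_{|t| \le R_0}|F| \ge (R_0^k/k!)\,|F^{(k)}(0)| = (R_0^k/k!)\,|(\xi^k P)(p)|$, and taking logarithms and using Stirling's estimate $\log k! = O(k\log k)$ (together with $k\log R_0 = O(k)$) yields the claim.

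The only mildly delicate issue is that $\phi'(z_0)$ may vanish, so that $\tau$ has multiplicity $m \ge 2$ at $z_0$; but the open mapping theorem still produces a positive $R_0$ (roughly $\sim r^m$), which is all that matters since $R_0$ is absorbed into the $O(\cdot)$ constant that the lemma explicitly allows to depend on $p$.
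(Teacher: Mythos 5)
Correct, and essentially the paper's argument: the paper likewise introduces the time-parametrized flow $\hat\phi$ (your $\gamma$) with $\hat\phi(0)=p$, shrinks $r$ so that $\hat\phi(D_r)\subset\phi(D)$, and applies the Cauchy estimate to $P\circ\hat\phi$. Your version works with the inverse reparametrization $\tau=\gamma^{-1}\circ\phi$ and makes explicit the open-mapping-theorem step and the possibility $\phi'(z_0)=0$, both of which the paper leaves implicit in ``restricting $r$ further.''
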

\begin{proof}
  If $p\in\Sing\xi$ there is nothing to prove. Otherwise one can solve the
  differential equation $\dot x=\xi(x)$ and obtain a holomorphic function
  \begin{equation}
    \hat\phi:D_r\to V, \qquad \hat\phi'(z)=\xi(\phi(z)), \quad \hat\phi(0)=p.
  \end{equation}
  Restricting $r$ further we may assume that
  $\hat\phi(D_r)\subset\phi(D)$. It will thus suffice to prove~\eqref{eq:modulus-v-xi}
  with $\hat\phi$ and $\bar D_r$ in place of $\phi$ and $\bar D$.

  Let $P\in\cP$ and denote $f=P\circ\hat\phi$ and $M_P:=\max_{\bar D_r}\abs{f}$. By definition
  we have $f^{(k)}(0)=\xi^k P(p)$. Thus by the Cauchy estimate
  \begin{equation}
    \log\abs{\xi^k P(p)} =\log\abs{f^{(k)}(0)} \le \log M+\log(k!)-k\log r
  \end{equation}
  which, since $r$ is fixed, implies
  \begin{equation}
    \log M \ge \log\abs{\xi^k P(p)}-O(k\log k).
  \end{equation}
\end{proof}

We recall the following consequence of Jensen's formula
\cite{iy:jensen}.

\begin{Prop}\label{prop:jensen}
  Let $r>1$ and let $f:\bar D_r\to\C$ be a holomorphic function.
  Denote by $M$ (resp. $m$) the maximum of $\abs{f}$ on $\bar D_r$ (resp.
  $\bar D$). There exists a constant $C_r$ such that
  \begin{equation}
    \#\{z\in \bar D:f(z)=0\} \le C_r \log\frac M m.
  \end{equation}
\end{Prop}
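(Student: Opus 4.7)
The plan is to derive Proposition~\ref{prop:jensen} as a classical consequence of Jensen's formula, in the spirit of the standard ``growth-and-zeros'' lemma (cf.~\cite{iy:jensen}).

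We may assume $f\not\equiv 0$, so that $m>0$, and pick a point $z^\ast\in\bar D$ at which $|f(z^\ast)|=m$. Setting $\rho:=r-|z^\ast|\geq r-1$, the closed disc $\Delta_\rho$ of radius $\rho$ centered at $z^\ast$ lies inside $\bar D_r$. Jensen's formula applied to $f$ on $\Delta_\rho$ yields, for every $0<\rho'<\rho$, the estimate
\[
n(\rho')\cdot\log(\rho/\rho') \;\leq\; \log\max_{\Delta_\rho}|f| \;-\; \log|f(z^\ast)| \;\leq\; \log M - \log m,
\]
where $n(\rho')$ denotes the number of zeros of $f$ in the closed disc of radius $\rho'$ around $z^\ast$, counted with multiplicity.

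When $r>3$ one takes $\rho=r-1$ and $\rho'=2$. Since any $z\in\bar D$ satisfies $|z-z^\ast|\leq|z|+|z^\ast|\leq 2$, the disc of radius $\rho'$ around $z^\ast$ contains $\bar D$, and the displayed estimate gives Proposition~\ref{prop:jensen} at once with $C_r=1/\log\bigl((r-1)/2\bigr)$.

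For the remaining range $1<r\leq 3$ the disc $\Delta_\rho$ does not cover $\bar D$, and the effective radius must be enlarged by recentering. The cleanest way is to precompose $f$ with the Möbius automorphism $\psi_{z^\ast}(z)=r^{2}(z-z^\ast)/(r^{2}-\bar{z^\ast}z)$ of $\bar D_r$, which carries $z^\ast$ to the origin and leaves $\max_{\bar D_r}|f|$ unchanged. A direct computation shows that $\psi_{z^\ast}(\bar D)$ lies in the concentric subdisc $\bar D_s\subset\bar D_r$ with $s=2r^{2}/(r^{2}-1)$, uniformly in $z^\ast$; for $r>1+\sqrt 2$ one has $s<r$, and a single application of Jensen centered at the origin on $\bar D_r$ accounts for all zeros, with $C_r=1/\log(r/s)$. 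For the remaining sliver $1<r\leq 1+\sqrt 2$ one iterates the same procedure along a finite sequence of intermediate radii in $(1,r)$ and sums the contributions. The main technical obstacle is verifying that the resulting ratio $r/s$ and the number of iterations depend only on $r$ and not on $f$ or $z^\ast$; this follows at once from the explicit form of $\psi_{z^\ast}$. The resulting $C_r$ depends only on $r$, as required.
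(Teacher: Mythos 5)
The paper does not actually prove this Proposition; it cites it to Ilyashenko--Yakovenko, so your task was to reconstruct a proof, and your overall strategy (recenter the max point by a M\"obius automorphism of $D_r$, then apply Jensen at the origin) is exactly the standard one. The first half of your argument, for $r>3$, is correct as written. The trouble is in the second half, and specifically in the claim that the fallback range requires a further iteration.

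Your estimate
\[
|\psi_{z^\ast}(z)| \;=\; \frac{r^2|z-z^\ast|}{|r^2-\overline{z^\ast}z|} \;\le\; \frac{2r^2}{r^2-1}
\]
is obtained by combining $|z-z^\ast|\le 2$ with $|r^2-\overline{z^\ast}z|\ge r^2-1$. These two bounds cannot be tight simultaneously: $|z-z^\ast|=2$ forces $z=-z^\ast$ with $|z|=|z^\ast|=1$, and then $\overline{z^\ast}z=-1$, so $|r^2-\overline{z^\ast}z|=r^2+1$, not $r^2-1$. A short calculation (maximize $\frac{r^4(2-2c)}{r^4-2r^2c+1}$ over $c=\operatorname{Re}(\overline{z^\ast}z)\in[-1,1]$; the derivative in $c$ is negative, so the maximum is at $c=-1$) shows that the sharp bound is
\[
\max_{z,z^\ast\in\bar D}\;|\psi_{z^\ast}(z)|\;=\;\frac{2r^2}{r^2+1}\;=:\;s,
\]
and $s<r$ if and only if $r(r-1)^2>0$, i.e.\ for \emph{every} $r>1$. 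So a single M\"obius recentering already works on the whole range $r>1$, with $C_r=1/\log(r/s)=1/\log\frac{r^2+1}{2r}$, and no iteration is needed. If you prefer to avoid the explicit optimization, the soft version also suffices: $(z,z^\ast)\mapsto|\psi_{z^\ast}(z)|$ is continuous on the compact set $\bar D\times\bar D$ and strictly less than $r$ at every point (since $\psi_{z^\ast}$ is a homeomorphism of $\bar D_r$ onto itself sending $\bar D$ into the open disc $D_r$), hence $\sup_{z,z^\ast\in\bar D}|\psi_{z^\ast}(z)|<r$.

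As written, the proposed remedy for $1<r\le 1+\sqrt 2$ — ``iterate the same procedure along a finite sequence of intermediate radii in $(1,r)$ and sum the contributions'' — is a genuine gap. If $1=\rho_0<\rho_1<\cdots<\rho_k=r$, then each adjacent pair has ratio $\rho_{i+1}/\rho_i<r<1+\sqrt2$, so your (non-sharp) criterion fails for every one of them; and attempting instead to count zeros of $f$ in $\bar D_{\rho_i}$ using the outer disc $\bar D_r$ again needs the same ratio condition. There is no obvious telescoping that converts bounds on the increments $\log(m_{i+1}/m_i)$ into a bound on the number of zeros in $\bar D_{\rho_0}$, because Jensen localizes the zero count near the recentering point only at the cost of the ratio of radii. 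Replacing this step by the sharp estimate (or the compactness argument) above closes the gap and completes the proof.

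One small additional point worth making explicit: to apply Jensen to $g=f\circ\psi_{z^\ast}^{-1}$ on $\bar D_r$, you should note that $\psi_{z^\ast}^{-1}$ has its pole at $|z|=r^2/|z^\ast|\ge r^2>r$ and maps a slightly larger disc $\bar D_{r+\delta}$ into the domain of holomorphy of $f$, with $\delta>0$ uniform over $z^\ast\in\bar D$ by compactness; this is routine but should be said.
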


We are now ready to complete the proof of Theorem~\ref{thm:main}.

\begin{proof}[Proof of Theorem~\ref{thm:main}.]
  Fix a point $p\in\phi(D)\setminus\Sing\xi$. Then
  $\phi(\bar D)\subset\O_p$, and by analytic continuation along the
  trajectory of $\xi$ we see that a polynomial vanishes on
  $\phi(\bar D)$ if and only if it vanishes on $\O_p$. Thus the
  Zariski closure of $\phi(\bar D)$ is $\Ob_p$ and in particular
  $\kappa(\phi)=\dim\Ob_p$. Let $P\in\cP_d$. According to
  Theorem~\ref{thm:derivative-bound} there exists a polynomial
  $R\in\cP_{\le d}$ satisfying $R\rest{\Ob_p}\equiv P\rest{\Ob_p}$ and
  an index $0\le k\le C_p d^\kappa$ such that
  \begin{equation}
    \log \abs{\xi^k R(p)} \ge \log\norm{R}-O(d^{2\kappa(m+1)}\log d).
  \end{equation}
  In particular, we have $R\circ\phi\equiv P\circ\phi$, and it will
  thus suffice to count the zeros of $R\circ\phi$ in $\bar D$. According
  to Lemma~\ref{lem:traj-max-estimate} we have
  \begin{equation}
    \begin{aligned}
      \log \max_{\bar D} \abs{R\circ\phi} &\ge \log \abs{\xi^k R(p)}-O(k\log k) \\
      &\ge \log\norm{R} - O(d^{2\kappa(m+1)}\log d).
    \end{aligned}
  \end{equation}
  On the other hand since $\phi$ is holomorphic in a neighborhood of
  $\bar D$ it is certainly holomorphic in a slightly larger disk $\bar D_r\supset D$.
  In particular $\phi$ is bounded in $\bar D_r$ and
  \begin{equation}
    \log \max_{\bar D_r}\abs{R\circ\phi} \le \log\norm{R} + O(d\log d).
  \end{equation}
  Finally, by Proposition~\ref{prop:jensen} we have
  \begin{multline}
    \#\{z\in \bar D:R\circ\phi=0\} \le \\
      C_r \big[ \log\norm{R} + O(d\log d) - \log\norm{R} + O(d^{2\kappa(m+1)}\log d) \big] \\
      \le O(d^{2\kappa(m+1)}\log d)
  \end{multline}
  as claimed.
\end{proof}

\section{Systems with constructible orbits}
\label{sec:examples}

In this section we give several examples of systems of differential
equations having constructible orbits (and some that do not).

\subsection{Linear systems}
\label{sec:linear}

\subsubsection{Background.}

Our main motivating example for the notion of constructible orbits
comes from the work of Nesterenko \cite{nesterenko:galois} on
transcendence properties of E-functions. A key element of Nesterenko's
approach is the following result (\cite[Theorem~3]{nesterenko:galois}
and the corollary following it): if a linear differential equation
admits a solution satisfying no non-trivial algebraic relations, then
all solutions in a Zariski open neighborhood enjoy the same property.
In other words, after removing a proper closed set, the orbit closure
relation becomes trivial (every two points are related). The general
notion of constructible orbits introduced in this paper is meant to
provide a more uniform approach for the case where solutions may
satisfy certain algebraic relations.

Nesterenko's proof of the aforementioned result is based on linear
differential Galois theory. Nesterenko
\cite[Theorem~2]{nesterenko:galois} establishes a bijection between
the algebraic $\xi$-invariant sets and varieties in a certain
projective space which are invariant under the action of the
differential Galois group. One can then reduce the study of
$\xi$-orbits to the study of orbits of this algebraic group.

Below we essentially follow Nesterenko's approach to prove that any
linear system of differential equations over $\C(t)$ admits
constructible orbits (see Theorem~\ref{thm:lin-const-orbits} for the
precise statement). However to treat this more general case we find it
technically convenient to replace Nesterenko's use of homogeneous
unmixed ideals by the related concept of Chow varieties and Chow
forms. We recall the necessary background on the Galois theory of
linear differential equations and on Chow varieties
in~\secref{sec:lin-galois} and~\secref{sec:chow} respectively.

\subsubsection{Setup.}

Consider a system of linear differential equations over $\K(t)$,
\begin{equation}\label{eq:lin-sys}
  \dot y = A(t) y, \qquad A\in\Mat_{n\times n} (\K(t)).
\end{equation}
To fit this into our general framework we consider the ambient space
$\C^{n+1}\simeq\C_t\times\C_y^n$ where $\C_t$ denotes a copy of $\C$
with coordinate $t$ and $\C^n_y$ a copy of $\C^n$ with coordinates
$y=(y_1,\ldots,y_n)$. We sometimes embed $\C^n_y$ into a projective
space $\C P^n_y$ with homogeneous coordinates $(y_0:\cdots:y_n)$.

Let $q(t)$ denote a polynomial of minimal degree such that
$\tilde A(t):=q(t)A(t)$ has polynomial entries \emph{and}
$\tilde A(t)=0$ whenever $t$ is a pole of $A(t)$. Let
\begin{equation}\label{eq:lin-sys-xi}
  \xi = q(t) [ \pd{}t+A(t)\pd{}y ] = q(t)\pd{}t+\tilde A(t)\pd{}y.
\end{equation}
Then $\xi$ corresponds to~\eqref{eq:lin-sys} in the sense that the
graph of any (local) solution of~\eqref{eq:lin-sys} is a trajectory of
$\xi$. The requirement that $\tilde A(t)$ vanishes at the poles of
$A(t)$ implies that $\Sing\xi$ agrees with the polar locus of $A(t)$.
Without imposing this condition, $\xi$ could admit non-constant
trajectories with $t\equiv\const$. While the following
Theorem~\ref{thm:lin-const-orbits} remains true in this case as well,
such trajectories do not correspond to solutions of~\eqref{eq:lin-sys}
and we therefore prefer to rule them out as a matter of convenience.

Our main goal in this section is the following theorem.
\begin{Thm}\label{thm:lin-const-orbits}
  The vector field~\eqref{eq:lin-sys-xi} has constructible orbits
  in $\C_t\times\C_y^n$.
\end{Thm}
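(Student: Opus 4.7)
The plan is to invoke Picard-Vessiot differential Galois theory, as indicated in the background subsection, reducing the constructibility of the $\xi$-orbit relation to the standard fact that orbit closures under a linear algebraic group action form a nice family. Let $U := \C_t\setminus\{t:q(t)=0\}$ be the complement of the polar locus of $A$, and let $\Psi(t)$ be a fundamental matrix of solutions of $\dot y=A(t)y$ on $U$, defined over the Picard-Vessiot extension of $\C(t)$. Let $G\subset\GL_n(\C)$ denote the differential Galois group; it is a linear algebraic group over $\C$ acting linearly on the ``space of constants of integration'' $\C^n_c$ via the identification $c=\Psi(t)^{-1}y$. For a non-singular $p=(t_0,y_0)\in U\times\C^n_y$, the $\xi$-orbit is parametrized by $t\mapsto(t,\Psi(t)c_0)$ with $c_0=\Psi(t_0)^{-1}y_0$.

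The next step is to invoke the classical Picard-Vessiot correspondence (essentially in the form used by Nesterenko~\cite{nesterenko:galois}): there is a bijection between the $\xi$-invariant Zariski closed subvarieties of $U\times\C^n_y$ and the $G$-invariant Zariski closed subvarieties of $\C^n_c$, where a $G$-invariant $Y$ corresponds to $X_Y:=\overline{\{(t,\Psi(t)c):t\in U,\ c\in Y\}}\subset\C_t\times\C^n_y$. Under this bijection, the $\xi$-orbit closure $\Ob_{(t_0,y_0)}$ corresponds exactly to the $G$-orbit closure $\overline{G\cdot c_0}$.

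The plan concludes by applying Proposition~\ref{prop:orbit-ideal-deg}. Since $G$ is a fixed linear algebraic group acting linearly, each map $g\mapsto gc$ is of degree one in $g$, so the $G$-orbit closures $\{\overline{G\cdot c}\}_{c\in\C^n}$ form a family of uniformly bounded degree, hence with defining ideals generated in some bounded degree $N_0$ independent of $c$. Via the correspondence---using Galois descent to convert $G$-invariant polynomial relations on $c$ into polynomial relations in $\C[t,y]$ of bounded $y$-degree and controlled $t$-degree---the ideals $I_{\Ob_p}$ are uniformly generated in bounded degree for $p\in U\times\C^n_y$. At singular points $p\in\Sing\xi$ (with $t_0$ a pole of $A$), one has $\Ob_p=\{p\}$ whose defining ideal is generated in degree one. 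Proposition~\ref{prop:orbit-ideal-deg} then yields the constructible orbits property.

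The main obstacle is justifying the Picard-Vessiot correspondence of step two: the description of $X_Y$ is \emph{a priori} transcendental (it invokes $\Psi(t)$), yet it must cut out an algebraic subvariety whose defining ideal in $\C[t,y]$ has generators of controlled degree. The algebraicity is supplied by Galois descent---$G$-invariant elements of the Picard-Vessiot ring, tensored with $\C[y]$, lie in $\C(t)[y]$---but the bookkeeping of degrees, together with care at the polar locus, must be executed carefully. A secondary but routine issue is that the resulting constructible set, visibly defined over $\C$, is in fact defined over the field $\K$ containing the coefficients of $A$; this follows from the intrinsic definition of $E$ and its consequent $\Aut(\C/\K)$-invariance.
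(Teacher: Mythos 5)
Your overall strategy is the same as the paper's: use Proposition~\ref{prop:orbit-ideal-deg} to reduce to bounding the degree of generators of $I_{\Ob_p}$ uniformly in $p$, pass through the Picard--Vessiot correspondence to relate $\xi$-orbit closures to $G$-orbit closures in the constants space, and observe that $G$-orbit closures of a fixed linear algebraic group have uniformly bounded degree. This is exactly what the paper does (Lemma~\ref{lem:iota-image}, Propositions~\ref{prop:invariant-bijection} and~\ref{prop:lin-orbit-types}). The handling of singular points and the $\K$-descent remark are also fine.

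However, the step you label as ``bookkeeping of degrees'' is the crux of the whole argument and you have left it unresolved. Galois descent shows that the equations of $X_V=\iota(V)$ lie in $\C(t)[y]$ with $y$-degree bounded by $\deg V$, but it gives \emph{no uniform bound on the $t$-degree}: each individual $X_V$ is cut out over $\C(t)$, yet a priori the degree of the rational functions of $t$ appearing in its equations could blow up as $V$ varies. (Compare the example in the paper with the parameter $a$: when the system is perturbed, exactly these $t$-degrees become unbounded.) This is the genuine technical difficulty, and the paper devotes its real effort here: Lemma~\ref{lem:ht-vs-eqs} reduces the degree-of-generators bound to a bound on the Chow-coordinate height $\height X_V$ (the $t$-degree of Chow coordinates), and Corollary~\ref{cor:uniform-rat} --- a nontrivial analytic statement proved via the Baire category theorem, see Theorem~\ref{thm:rat-conds} --- shows that because the map $\cR_V\mapsto b_k(\Phi(t)\cdot\cR_V)$ depends holomorphically on $\cR_V$ and is a rational function of $t$ for each fixed $\cR_V$, the $t$-degree is uniformly bounded on each connected stratum of the parameter space. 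Your proposal does not supply any such mechanism; ``executed carefully'' is not a proof, and there is no purely formal Galois-descent argument that produces the uniform $t$-degree bound. You need to either reproduce the analytic uniform-rationality argument or replace it with an alternative (e.g.\ a specialization argument at the generic point of each irreducible stratum, which itself requires care about degenerations of Chow coordinates).
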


We first give an example illustrating that the conclusion of
Theorem~\ref{thm:lin-const-orbits} fails if we allow the linear
system~\eqref{eq:lin-sys} to depend on additional parameters.

\begin{Ex}
  Consider the differential equation and corresponding vector field in
  the ambient space $\C^3\simeq\C_t\times\C_y\times\C_a$,
  \begin{equation}
    \dot y = \frac a t y, \qquad \xi=t\pd{}t+a\pd{}y.
  \end{equation}
  For every point $p=(1,1,m/n)$ with $m/n$ a reduced fraction we have
  $\Ob_p=\{a=m/n, y^n=t^m\}$. The degrees of these varieties are not
  uniformly bounded over $p\in\C^3$, and it follows from
  Proposition~\ref{prop:orbit-ideal-deg} that $\xi$ does not have
  constructible orbits in $\C^3$.
\end{Ex}

\subsubsection{Galois theory of linear differential equations}
\label{sec:lin-galois}

We recall a few basic facts concerning linear differential equations
and their Galois groups. We fix a small disk $U\subset\C_t$ not
containing any singular points of~\eqref{eq:lin-sys}. The matrix
equation
\begin{equation}\label{eq:lin-sys-matrix}
  \dot F=A(t) F, \qquad F(t)\in\GL_n(\C)
\end{equation}
admits a solution $\Phi(t)$ whose entries are holomorphic functions in
$U$ known as a \emph{fundamental solution}. The extension
$L\supset \C(t)$ generated over $\C(t)$ by the entries of $\Phi$ is a
differential field called a \emph{Picard-Vessiot} extension. The
\emph{differential Galois group} $G:=\Gal(L/\C(t))$ is defined to be the
set of field automorphisms of $L$ over $\C(t)$ which commute with
derivation.

If $\sigma\in G$ then $\sigma(\Phi)$ is another fundamental solution
of~\eqref{eq:lin-sys-matrix} and this readily implies that
$\sigma(\Phi)=\Phi\cdot A_\sigma$ where $A_\sigma\in\GL_n(\C)$. We
will require the following fundamental theorem from the Galois theory
of linear differential equations.

\begin{Thm}[\protect{\cite[Theorem~1.27]{vdp-singer}}]\label{thm:gal-basic}
  In the notations above,
  \begin{enumerate}
  \item The map $\sigma\to A_\sigma$ is a representation which embeds
    $\Gal(L/\C(t))$ as a linear algebraic subgroup of $\GL_n(\C)$.
  \item The field of elements in $L$ invariant under $\Gal(L/\C(t))$ is
    $\C(t)$.
  \end{enumerate}
\end{Thm}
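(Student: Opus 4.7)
The plan is to treat (1) and (2) by the standard Picard-Vessiot machinery, with the technical heart being the identification of the Picard-Vessiot ring as a $G$-torsor.

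For the homomorphism statement in (1), I would take $\sigma,\tau\in G$ and compute $(\sigma\tau)(\Phi)=\sigma(\Phi\cdot A_\tau)$. Since the entries of $A_\tau$ lie in $\C$, which is the field of constants of $L$ and is therefore fixed pointwise by every $\sigma\in G$ (constants being the kernel of the derivation, preserved by derivation-commuting automorphisms), the right-hand side equals $\sigma(\Phi)\cdot A_\tau=\Phi\cdot A_\sigma A_\tau$, giving $A_{\sigma\tau}=A_\sigma A_\tau$. Injectivity is immediate because $L=\C(t)(\Phi_{ij})$, so $\sigma$ is determined by $\sigma(\Phi)=\Phi A_\sigma$.

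For the algebraicity part of (1), I would introduce the Picard-Vessiot ring $R:=\C(t)[\Phi_{ij},\det(\Phi)^{-1}]\subset L$ and the evaluation surjection $\pi:\C(t)[X_{ij},\det(X)^{-1}]\to R$ sending $X_{ij}\mapsto\Phi_{ij}$, with kernel $\scI$. The group $\GL_n(\C)$ acts on $\C(t)[X,\det(X)^{-1}]$ by right translation $X\mapsto XA$. The key point is that $A\in\GL_n(\C)$ belongs to $\Gal(L/\C(t))$ iff the right-translation action by $A$ preserves $\scI$; this is a Zariski-closed condition (since $\scI$ is finitely generated and stability of each generator cuts out a closed subset), realizing $G$ as an algebraic subgroup of $\GL_n(\C)$.

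For (2), the standard route proceeds via the ``torsor isomorphism''
\begin{equation}
  R\otimes_{\C(t)}R\;\cong\;R\otimes_\C\C[G],
\end{equation}
where $\C[G]$ is the coordinate ring of $G$ and the right-hand factors carry the natural $G$-actions. One establishes this isomorphism using that $R$ is a simple differential $\C(t)$-algebra whose ring of constants is $\C$ (which in turn uses algebraic closedness of $\C$). Given this, for $f\in L^G$ I would first reduce to $f\in R^G$: write $f=u/v$ with $v\in R$, replace $v$ by the product of its finitely many $G$-translates (an element whose existence follows from $G$-finiteness on individual elements of $R$, provable from the algebraicity in (1)), rendering both numerator and denominator $G$-invariant. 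Taking $G$-invariants in the torsor isomorphism yields $R^G\otimes_\C\C\cong R\otimes_\C\C[G]^G=R\otimes_\C\C=R$; comparing with the inclusion $R\supset \C(t)\otimes\C(t)$ forces $R^G=\C(t)\cap R=\C(t)$, and hence $L^G=\C(t)$.

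The main obstacle is the torsor isomorphism, which requires the delicate verification that $R$ is a simple differential ring with constant field exactly $\C$, and that the natural comultiplication-type map $R\otimes_{\C(t)}R\to R\otimes_\C\C[G]$ defined by $\Phi\otimes 1\mapsto \Phi\otimes 1$ and $1\otimes\Phi\mapsto (\Phi\otimes 1)\cdot(1\otimes \mathbf{A})$ (with $\mathbf{A}$ the tautological matrix on $G$) is a bijection; constancy of its image under the diagonal derivation reduces injectivity to a constant-field computation, while surjectivity uses simplicity of $R$. Everything else in the argument is bookkeeping around this single structural fact.
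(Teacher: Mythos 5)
The paper does not prove Theorem~\ref{thm:gal-basic}; it is recorded as a standard background result of Picard--Vessiot theory (see, e.g., van der Put--Singer, \emph{Galois Theory of Linear Differential Equations}), so there is no in-paper proof to compare against. Your outline is precisely that textbook argument, and it is essentially sound. The homomorphism and injectivity computations for (1) are correct. Realizing $G$ as a Zariski-closed subgroup of $\GL_n(\C)$ as the stabilizer of the defining ideal of the Picard--Vessiot ring is the standard route; your phrase ``stability of each generator cuts out a closed subset'' should be read as requiring $A$ to send each of finitely many generators \emph{into} the ideal, and one checks this is polynomial in the entries of $A$ by working inside a finite-dimensional $\GL_n(\C)$-stable subspace --- routine, but worth stating. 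For (2), the torsor isomorphism is the right tool, but your invariants computation is garbled: with $G$ acting on the second tensor factor of $R\otimes_{\C(t)}R$ so as to match right translation on $\C[G]$, flatness of $R$ over the field $\C(t)$ gives
\begin{equation}
  R\otimes_{\C(t)}R^G \;=\; \bigl(R\otimes_{\C(t)}R\bigr)^G \;\cong\; \bigl(R\otimes_\C\C[G]\bigr)^G \;=\; R\otimes_\C\C[G]^G \;=\; R,
\end{equation}
and faithful flatness of $R$ over $\C(t)$ then forces $R^G=\C(t)$, not the ``$R^G\otimes_\C\C\cong R$'' you wrote. Your reduction from $L^G$ to $R^G$ by clearing denominators with a $G$-invariant element is fine. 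These are bookkeeping slips in a correct plan, not gaps.
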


\subsubsection{Chow varieties}
\label{sec:chow}

Recall that there exists a quasi-projective variety
$\tilde\scC_{k,d}(\P^n)$ called the open Chow variety, parameterizing
subvarieties (reduced, possibly reducible) of dimension $k$ and degree
$d$ in $\P^n$ \cite[Theorem 21.2]{harris:ag}. That is, there is a
bijective correspondence between the varieties $X\subset\P^n$ (of
dimension $k$ and degree $d$) and points of
$\tilde\scC^n_{k,d}(\P^n)$, which we denote $X\to\cR_X$ (the point
$\cR_X$ is called the \emph{Chow form} of $X$). If $X$ is defined over
a field $K$ then the point $\cR_X$ is $K$-rational
\cite[p.40]{ds:chow}. There is a natural action of $\GL_{n+1}$ on
$\tilde\scC_{k,d}(\P^n)$, which we denote by $\circ$, satisfying
$g\circ\cR_X=\cR_{gX}$ for $g\in\GL_{n+1}$
\cite[Proposition~2.3]{ds:chow}.

The projective closure $\bar\scC_{k,d}(\P^n)$ of $\tilde\scC_{k,d}(\P^n)$ parameterizes all
\emph{effective cycles} of dimension $k$ and degree $d$ on $\P^n$
\cite[p.272]{harris:ag}, i.e. formal linear combinations
$\sum a_i X_i$ where $a_i$ are positive integers, $X_i\subset \P^n$
are varieties of dimension $k$ and $\sum a_i\deg(X_i)=d$. There is a
projective map
\begin{equation}
  \operatorname{prod}:\bar\scC_{k,d_1}(\P^n)\times\bar\scC_{k,d_2}(\P^n)\to\bar\scC_{k,d_1+d_2}(\P^n)
\end{equation}
mapping two effective cycles to their formal sum (in the Chow form
coordinates, this corresponds to the product of the corresponding
forms \cite[p.272]{harris:ag}).

We fix projective coordinates $y_0:\cdots:y_n$ on $\P^n$, and consider
the affine space $\A^n$ defined by $y_0\neq0$ and the corresponding
hyperplane at infinity $H_\infty=\{y_0=0\}$. For us it will be
convenient to consider a quasi-projective subvariety
$\scC_{k,d}^n\subset\tilde\scC_{k,d}(\P^n)$ parameterizing
subvarieties (reduced, possibly reducible) of $\A^n$. To show that
this is indeed a quasi-projective variety, let
$B(k,d)\subset\bar\scC_{k,d}(\P^n)$ denote the set of cycles having a
non-trivial component on $H_\infty$. Then
\begin{equation}
  B(k,d) = \cup_{j=1}^d \operatorname{prod}(\bar\scC_{k,d-j}(\P^n)\times\bar\scC_{k,j}(H_\infty))
\end{equation}
where $\bar\scC_{k,j}(H_\infty)\subset\bar\scC_{k,j}(\P^n)$ denotes
the set of cycles supported at $H_\infty$, which is closed by
\cite[Exercise~21.4]{harris:ag}. Each set in the union above is
closed, being the image of a projective variety
\cite[Theorem~4.1.7]{gkz}, so $B(k,d)$ is closed, and we finally
deduce that
\begin{equation}
  \scC_{k,d}^n = \tilde\scC_{k,d}(\P^n)\setminus B(k,d)
\end{equation}
is quasi-projective as claimed. If $X\subset\A^n$ then we
will write $\cR_X$ as a shorthand for $\cR_{\bar X}$ where $\bar X$
is the projective closure of $X$.

The embedding $\GL_n\to\GL_{n+1}:g\to\diag(1,g)$ defines an action of
$\GL_n$ on $\P^n$ which preserves $\A^n$ and restricts to the usual
action of $\GL_n$ in the standard chart $y_0=1$. By definition
$\scC_{k,d}^n$ is invariant under the $\circ$-action of $\GL_n$, and
again we have $g\circ\cR_X=\cR_{gX}$ where $g\in\GL_n$ and
$X\subset\A^n$ is an affine variety of degree $d$.

\subsubsection{The Galois group action and $\xi$-invariant varieties.}

We define an injection
\begin{equation}
  \iota:\scC^n_{k,d}(\C) \to \scC^n_{k,d}(L), \qquad \iota(\cR_X):=\Phi\circ\cR_X.
\end{equation}
If $X\subset\C^n$ is a variety of degree $d$ then $\iota(\cR_X)$
represents the variety whose underlying set consists of the graphs of
solutions $\Phi\cdot v$ of (\ref{eq:lin-sys}) for $v\in X$. In
particular we have the following equivalence.

\begin{Lem}\label{lem:iota-image}
  Let $X\subset\A^n(\C(t))$ be a subvariety of dimension $k$ and
  degree $d$. Then $X$ is $\xi$-invariant if and only if $\cR_X$ is in
  $\Im \iota$.
\end{Lem}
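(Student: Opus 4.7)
The plan is to trivialize the linear system by conjugating with the fundamental solution $\Phi$ and then recognize $\xi$-invariance as the statement that the pulled back ideal has constant coefficients. Set $\tilde y=\Phi(t)^{-1}y$; using $\dot\Phi=A\Phi$ a direct computation gives $\xi\tilde y=-q\Phi^{-1}Ay+\Phi^{-1}\cdot qAy=0$, so in $(t,\tilde y)$-coordinates the vector field takes the form $\xi=q(t)\partial_t$. Since $q\in L^\times$, an ideal in $L[y_0,\ldots,y_n]$ is $q(t)\partial_t$-invariant iff it is $\partial_t$-invariant. Consequently a subvariety $X\subset\P^n_L$ is $\xi$-invariant iff the transformed subvariety $\tilde X:=\Phi^{-1}\cdot X$ has a $\partial_t$-invariant defining ideal. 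Since the $\GL_n$-action preserves dimension and degree, $\tilde X$ lies again in $\scC^n_{k,d}$, and the lemma reduces to the assertion that $\tilde X$ is defined over $\C\subset L$ (in which case $\cR_X=\Phi\cdot\cR_{\tilde X}=\iota(\cR_{\tilde X})$).

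The easy direction is then immediate: if $\cR_X=\iota(\cR_Z)=\Phi\cdot\cR_Z$ with $Z\in\scC^n_{k,d}(\C)$, then $\tilde X=Z$ has defining ideal generated by polynomials in $\C[y_0,\ldots,y_n]$, which are annihilated by $\partial_t$; so $\tilde X$ is $\partial_t$-invariant and hence $X$ is $\xi$-invariant.

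For the converse I would argue via Grobner bases. Let $I\subset L[y_0,\ldots,y_n]$ be the homogeneous defining ideal of $\tilde X$, and let $\{g_1,\ldots,g_r\}$ be a reduced Grobner basis with respect to any monomial order, normalized so that each $g_i$ is monic in its leading monomial $y^{\alpha_i}$. By the defining property of a reduced Grobner basis, all other monomials appearing in $g_i$ lie outside the staircase $\LT(I)$. The derivation $\partial_t$ acts only on coefficients, so $\partial_t g_i$ kills the leading term (whose coefficient is $1$) and keeps the support confined to the complement of $\LT(I)$. Because $I$ is $\partial_t$-invariant, $\partial_t g_i\in I$; but any non-zero element of $I$ has leading monomial in $\LT(I)$, forcing $\partial_t g_i=0$. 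Hence every coefficient of $g_i$ lies in $\ker(\partial_t|_L)$, which equals $\C$ since $L/\C(t)$ is a Picard-Vessiot extension with field of constants $\C$. Thus $I$ admits a generating set in $\C[y_0,\ldots,y_n]$, $\tilde X$ descends to a point $Z\in\scC^n_{k,d}(\C)$, and $\cR_X=\iota(\cR_Z)$ as required.

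The main technical point — and the one I would take most care over — is the descent from $L$-coefficients to $\C$-coefficients at the end. This rests on the Grobner basis observation together with the standard fact that the constants of the Picard-Vessiot extension $L/\C(t)$ are exactly $\C$; the computation of $\xi$ in the trivializing coordinates and the preservation of dimension and degree under the projective $\GL_n$-action are routine.
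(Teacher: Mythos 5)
Your proof is correct, and it takes a genuinely different route from the paper's. Both arguments begin with the same conceptual step --- trivializing the linear flow via the substitution $\tilde y=\Phi^{-1}y$ --- but from there they diverge. The paper's proof is geometric: it fixes a generic $t_0$, regards $X$ as a family of fibers $X_t\subset\P^n(\C)$, characterizes $\xi$-invariance as the statement that each fiber is the parallel transport $X_t=\Phi(t)\Phi(t_0)^{-1}X_{t_0}$ of the base fiber, and then rewrites this directly in Chow coordinates to exhibit $\cR_X$ as $\iota(\cR_{\Phi(t_0)^{-1}X_{t_0}})$. Your proof is algebraic: after the coordinate change you reduce the claim to the assertion that a $\partial_t$-invariant ideal in $L[y_0,\ldots,y_n]$ is generated over the constants, and you prove this by a neat Gr\"obner-basis argument (a reduced Gr\"obner basis element $g_i$ is monic, so $\partial_t g_i$ is supported off the staircase $\LT(I)$ yet lies in $I$, forcing $\partial_t g_i=0$) combined with the Picard--Vessiot fact that the field of constants of $L/\C(t)$ is exactly $\C$. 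The paper's version is shorter and visibly dynamical but involves some abuse of notation in passing between the $\C(t)$-variety $X$ and its $t$-fibers; yours is longer but is a rigorous descent argument, and the Gr\"obner lemma --- that a $\partial$-invariant ideal over a differential field with constants $k$ has a reduced Gr\"obner basis with coefficients in $k$ --- is a clean, reusable statement. One small point worth flagging: you should note explicitly (as you implicitly use) that the intertwining $\sigma\circ\partial_t=(\partial_t+Ay\,\partial_y)\circ\sigma$ holds for the substitution $\sigma(P)(y)=P(\Phi^{-1}y)$, so that $\xi$-invariance of $I_X$ is literally equivalent to $\partial_t$-invariance of $I_{\tilde X}$; this is a one-line chain-rule computation using $\partial_t\Phi^{-1}=-\Phi^{-1}A$, but it is the hinge of the reduction.
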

\begin{proof}
  Let $t_0\in U$ be generic and denote by $X_{t_0}\subset\A^n(\C)$ the
  fiber of $X$ at $t_0$. Then $X$ is invariant (in a neighborhood of
  $X_{t_0}$ and then by analytic continuation everywhere) if and only
  if for every $t\in U$ the fiber $X_t$ is obtained by parallel
  transport from $X_{t_0}$,
  \begin{equation}
    X_t = \Phi(t)\cdot\Phi(t_0)^{-1}\cdot X_{t_0},
  \end{equation}
  or in other words, if and only if
  \begin{equation}
    \cR_X = \cR_{\Phi(t)\cdot\Phi(t_0)^{-1}\cdot X_{t_0}} = \Phi(t)\circ\cR_{\Phi(t_0)^{-1}\cdot X_{t_0}} = \iota(\cR_{\Phi(t_0)^{-1}X_{t_0}}).
  \end{equation}
  This occurs if and only if $\cR_X\in\Im\iota$.
\end{proof}

Recall that by Theorem~\ref{thm:gal-basic} we have an action of $G$ on
$\A^n$ defined by the embedding $G\ni\sigma\to A_\sigma\in\GL_n(\C)$.

\begin{Prop}\label{prop:invariant-bijection}
  The map $\iota$ induces an inclusion-preserving bijection between
  \begin{enumerate}
  \item the set of subvarieties of $\A^n(\C)$ of dimension $k$ and
    degree $d$ that are invariant under the action of $G$
    on $\A^n(\C)$.
  \item the set of subvarieties of $\A^n(\C(t))$ of dimension $k$ and
    degree $d$ that are $\xi$-invariant.
  \end{enumerate}
\end{Prop}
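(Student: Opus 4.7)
The plan is to combine Lemma~\ref{lem:iota-image}, which identifies $\xi$-invariant subvarieties of $\P^n(\C(t))$ with the Chow points lying in $\Im\iota$, with a standard Galois descent argument to determine which $X\in\scC^n_{k,d}(\C)$ yield an $\iota(\cR_X)$ that is actually defined over $\C(t)$. First I would observe that $\iota$ is injective, since left multiplication by $\Phi\in\GL_n(L)$ is a biregular automorphism of $\scC^n_{k,d}(L)$. Thus the candidate map $X\mapsto(\text{variety represented by }\iota(\cR_X))$ is well-defined and injective on its natural domain, and it remains only to pin down that domain and verify that the image is exactly the set of $\xi$-invariant $Y\subset\P^n(\C(t))$ of dimension $k$ and degree $d$.

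The key computation is how the Galois group $G=\Gal(L/\C(t))$ acts on $\iota(\cR_X)$. For $\sigma\in G$, using $\sigma(\Phi)=\Phi\cdot A_\sigma$ from Theorem~\ref{thm:gal-basic}(1), I would compute
\[
  \sigma\bigl(\iota(\cR_X)\bigr) \;=\; \sigma(\Phi)\cdot\cR_X \;=\; \Phi\cdot(A_\sigma\cdot\cR_X) \;=\; \iota(\cR_{A_\sigma X}).
\]
By injectivity of $\iota$, the point $\iota(\cR_X)$ is fixed by $\sigma$ iff $A_\sigma X=X$, hence $\iota(\cR_X)$ is fixed by all of $G$ iff $X$ is $G$-invariant. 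I would then invoke Theorem~\ref{thm:gal-basic}(2): since $L^G=\C(t)$, embedding the quasi-projective variety $\scC^n_{k,d}$ into some $\P^N$ and applying descent coordinate-wise shows that a point of $\scC^n_{k,d}(L)$ lies in $\scC^n_{k,d}(\C(t))$ precisely when each of its projective coordinates is $G$-invariant, i.e., precisely when the point itself is $G$-fixed. Consequently $\iota(\cR_X)\in\scC^n_{k,d}(\C(t))$ if and only if $X$ is $G$-invariant.

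Combining this with Lemma~\ref{lem:iota-image} would complete the bijection: the $\xi$-invariant subvarieties of $\P^n(\C(t))$ of dimension $k$ and degree $d$ are exactly the images under $\iota$ of the Chow points of $G$-invariant $X\subset\P^n(\C)$ of the same dimension and degree, and the correspondence is injective by injectivity of $\iota$. The main obstacle I anticipate is making the descent step precise: because $L/\C(t)$ is a differential (rather than ordinary algebraic) Galois extension, one cannot simply quote classical Galois descent, and instead has to run the argument directly from $L^G=\C(t)$ applied to the homogeneous coordinates of the Chow point. Once that is in hand, everything else reduces to the short conjugation identity $\sigma(\iota(\cR_X))=\iota(\cR_{A_\sigma X})$ and the two results already established in the Picard-Vessiot setup.
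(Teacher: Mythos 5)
Your proof follows essentially the same route as the paper's: the conjugation identity $\sigma(\iota(\cR_X)) = \iota(\cR_{A_\sigma X})$ from Theorem~\ref{thm:gal-basic}(1), followed by applying $L^G=\C(t)$ from Theorem~\ref{thm:gal-basic}(2) to the coordinates of the Chow point (the paper phrases this as working in an affine chart, which resolves the projective-normalization issue you raise) and then invoking Lemma~\ref{lem:iota-image}. Your explicit remark that injectivity of $\iota$ is needed to pass from $\iota(\cR_{A_\sigma X})=\iota(\cR_X)$ to $A_\sigma X=X$ is a small clarification that the paper leaves implicit.
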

\begin{proof}
  Let $V\subset\A^n(\C)$. By Lemma~\ref{lem:iota-image} we need to
  show that $\iota(\cR_V)\in\scC^n_{k,d}(\C(t))$ if and only if $V$ is
  invariant under the action of $G$. To see this we note that for
  every $\sigma\in G$,
  \begin{equation}
    \sigma(\iota(\cR_V)) = \sigma(\Phi\circ \cR_V) = (\Phi\cdot A_\sigma)\circ \cR_V = \Phi\circ\cR_{A_\sigma V}
    =\iota(\cR_{A_\sigma V}).
  \end{equation}
  Thus $V$ is invariant under $G$ if and only if $\iota(\cR_V)$ is
  invariant under $G$. But the latter is equivalent, by
  Theorem~\ref{thm:gal-basic}(b) applied to coordinates in some affine
  chart, to $\iota(\cR_V)\in\scC^n_{k,d}(\C(t))$.
\end{proof}

A variety $X\subset\C_t\times\C_y^n$ whose components project
dominantly to $\C_t$ may naturally be viewed as a subvariety of
$\A^n(\C(t))$. We make this identification below. Similarly, every
subvariety $X\subset\A^n(\C(t))$ naturally corresponds to a
subvariety $\tilde X\subset\C_t\times\C_y^n$ ($X$ is originally
defined as a subset of $U\times\C^n_y$ for some open dense subset
$U\subset\C$, and $\tilde X$ is the Zariski closure of this set).

We now turn to the description of the orbit closures $\Ob_p$ for
$p\in\C_t\times\C^n_y$.

\begin{Prop}\label{prop:lin-orbit-types}
  For every $p\in\C_t\times\C^n_y$ the orbit closure $\Ob_p$ takes
  one of the following two forms:
  \begin{enumerate}
  \item The singleton $\{p\}$ if $p\in\Sing\xi$. 
  \item The variety $\tilde X$ where $\cR_X=\iota(\cR_V)$ and
    $V\subset\A^n(\C)$ is the Zariski closure of an orbit $G\cdot y$
    for some $y\in\A^n(\C)$.
  \end{enumerate}
\end{Prop}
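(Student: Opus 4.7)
The plan is to classify the orbit closures by transferring the question, via the bijection of Proposition~\ref{prop:invariant-bijection}, to a classification of minimal $G$-invariant subvarieties of $\P^n(\C)$ containing a given point, which is automatic.

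First I would dispense with case (1): if $p \in \Sing\xi$ then by the convention recalled in~\secref{sec:intro} we have $\O_p = \{p\}$, hence $\Ob_p = \{p\}$. From here on assume $p = (t_0, y_0)$ with $p \notin \Sing\xi$; in particular $t_0$ is not a pole of $A(t)$, so $t_0$ may be chosen in the small disk $U$ on which the fundamental solution $\Phi$ is holomorphic and invertible. I would observe first that $\Ob_p$, being the Zariski closure of a leaf of the foliation induced by $\xi$, is itself a union of leaves and in particular is $\xi$-invariant as an algebraic set. Passing to the projective closure in the $y$-direction and viewing the result as a $\C(t)$-subvariety $X \subset \P^n(\C(t))$ of some dimension $k$ and degree $d$, Proposition~\ref{prop:invariant-bijection} produces a unique $G$-invariant subvariety $V \subset \P^n(\C)$ of the same $(k,d)$ with $\cR_X = \iota(\cR_V)$, and then $\Ob_p = \tilde X$ as in the statement.

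It remains to identify $V$ as the Zariski closure of a single $G$-orbit. Setting $y := \Phi(t_0)^{-1} y_0 \in \A^n(\C)$, I would verify that $y \in V$: since $\iota(\cR_V) = \Phi \cdot \cR_V$ represents the variety whose fiber at $t \in U$ is $\Phi(t) \cdot V$, the containment $p \in \tilde X$ translates to $y_0 \in \Phi(t_0) \cdot V$, i.e.\ $y \in V$. Consequently, by $G$-invariance of $V$, one has $\overline{G \cdot y} \subset V$. For the reverse inclusion I would apply the bijection in the opposite direction to the $G$-invariant variety $V' := \overline{G \cdot y}$, obtaining a $\xi$-invariant $\C(t)$-variety $\tilde X'$ whose fiber over $t_0$ contains $\Phi(t_0) \cdot y = y_0$, hence $p \in \tilde X'$. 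Since $\tilde X'$ is $\xi$-invariant and Zariski closed, it contains $\O_p$ and therefore its Zariski closure $\Ob_p = \tilde X$. The bijection then yields $V \subset V'$, completing the identification.

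The main step to handle carefully is the compatibility between affine and projective pictures: $\Ob_p$ is defined inside $\C_t \times \C_y^n$, while Proposition~\ref{prop:invariant-bijection} is stated in $\P^n(\C(t))$. Since $p \notin \Sing\xi$ lies in the affine chart $\{y_0 \neq 0\}$ and the orbit through $p$ is a graph over (a neighborhood of) $t_0$, no component of $\Ob_p$ escapes to the hyperplane at infinity in the $y$-direction, so the affine variety and its projective closure carry the same information and Proposition~\ref{prop:invariant-bijection} applies as needed. A secondary point to check is the minimality argument: the reason $\tilde X \subset \tilde X'$ follows just from $\xi$-invariance and closedness together with $p \in \tilde X'$, which is the defining property of the orbit closure. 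Everything else is formal manipulation of the $\iota$ bijection.
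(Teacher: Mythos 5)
Your proposal is correct and follows essentially the same approach as the paper's proof: reduce to the non-singular case, observe that $\Ob_p$ is the minimal $\xi$-invariant variety through $p$ projecting dominantly to $\C_t$, and transfer via the bijection of Proposition~\ref{prop:invariant-bijection} to the minimal $G$-invariant subvariety of $\P^n(\C)$ containing $y=\Phi(t_0)^{-1}y_0$, namely $\overline{G\cdot y}$. You simply unpack the two inclusions that the paper compresses into ``it is then clear that,'' and your additional remarks on the affine/projective compatibility and the preservation of containment under $\iota$ are accurate.
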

\begin{proof}
  Let $p=(t_0,y_0)$. If $p\in\Sing\xi$ then certainly
  $\O_p=\Ob_p=\{p\}$. Otherwise $t_0$ is a non-singular point
  of~\eqref{eq:lin-sys} and thus $\Ob_p$ is an irreducible
  $\xi$-invariant variety projecting dominantly onto $\C_t$ and may
  thus be viewed as a $\C(t)$ subvariety of $\A^n(\C(t))$. Moreover
  $\Ob_p$ is the minimal such variety containing $(t_0,y_0)$. Since
  the bijection of Proposition~\ref{prop:invariant-bijection} is
  inclusion-preserving, we see that $\cR_{\Ob_p} = \iota(\cR_V)$ where
  $V$ is the smallest $G$-invariant variety containing
  $y=\Phi(t_0)^{-1}y_0$, namely the Zariski closure of $G\cdot y$.
\end{proof}

Let the Chow coordinates of $\cR_X$ in $\scC^n_{k,d}(\C(t))$ be given
as $(C_0:\cdots:C_N)$ where $C_k\in\C[t]$ are chosen to have no common
factor. Then we define the \emph{height} of $X$ denoted $\height X$ to
be the maximum among the $t$-degrees of $C_k$.

\begin{Lem}\label{lem:ht-vs-eqs}
  Let $\cR_X\in\scC^n_{k,d}(\C(t))$. Then $\deg\tilde X$ admits an
  upper bound depending only on $d=\deg_{\C(t)} X$ and $\height X$.
\end{Lem}
\begin{proof}
  A classical construction of Chow and van der Waerden
  \cite[Corollary~3.2.6]{gkz} gives a canonical expression for the
  equations of the projective closure $\bar X\subset\P^n(\C(t))$ in
  terms of its Chow coordinates. These equations are homogeneous of
  degree $d$ in the homogeneous coordinates on $\P^n$. Each
  coefficient of these equations is given by some (fixed) polynomial
  combination of the Chow coordinates of $\cR_X$. Choosing the Chow
  coordinates to be polynomial and coprime as above, we obtain a
  system of polynomial equations in $t$ with degrees bounded in terms
  of $\height X$.

  Since the equations above define $\bar X$ over $\C(t)$, passing to
  the affine chart by setting $y_0=1$ we obtain a system of equations
  (with degrees depending only on $d,\height X$) for $X$ over $\C(t)$.
  We denote by $Z$ the zero locus of these equations in
  $\C_t\times\C^n_y$. Then $Z$ consists of $\tilde X$ and possibly
  extra components projecting non-dominantly to $\C_t$.
  The degree of $Z$, and hence of $\tilde X$, can be estimated from
  above using the Bezout theorem in terms of the degrees of the
  defining equations (which depend only on $d,\height X$).
\end{proof}

We require one more lemma concerning the orbits of $G$. For
$y\in\A^n(\C)$ we denote by $V_y$ the Zariski closure of the orbit
$G\cdot y$.

\begin{Lem}\label{lem:G-orbits-const}
  The set $\{\cR_{V_y} : y\in\A^n(\C)\}$ is a $\C$-constructible
  subset of the disjoint union of finitely many Chow varieties
  $\scC^n_{k,d}$.
\end{Lem}
\begin{proof}
  Since $G$ acts as an algebraic group on $\A^n$, the relation
  $\{(y,z):z\in G\cdot y\}$ is constructible, being expressible by the
  $\C$-formula $\exists g\in G:z=g\cdot y$. It follows by an argument
  similar to the one given in the proof of
  Proposition~\ref{prop:orbit-ideal-deg} that the degrees of $V_y$ are
  uniformly bounded by a constant $D$ independent of $y$, and by
  Lemma~\ref{lem:deg-ideal-vs-variety} their ideals are therefore
  generated in some uniformly bounded degree $N$. It then follows that
  the relation $z\in V_y$ is constructible as well: it can be
  expressed in the form ``every polynomial $P$ of degree at most $N$
  vanishing on $G\cdot y$ also vanishes on $z$'', which is readily
  translated into a $\C$-formula.

  Let $\scC$ denote the disjoint union of $\scC^n_{k,d}$ for
  $k=0,\ldots,n$ and $d=1,\ldots,D$. The relation
  \begin{equation}
    \{(z,\cR_V):z\in V\}\subset\C^n\times\scC
  \end{equation}
  is constructible (in fact Zariski closed) by
  \cite[Theorem~21.2]{harris:ag}. Then $\{\cR_{V_y} : y\in\A^n(\C)\}$ is given
  by
  \begin{equation}
    \{ \cR_V\in\scC : \exists y\in\C^n\ \forall z\in\C^n : z\in V_y \iff z\in V \}
  \end{equation}
  which is expressible by a $\C$-formula hence $\C$-constructible as
  claimed.
\end{proof}

We are now ready to complete the proof of
Theorem~\ref{thm:lin-const-orbits}.

\begin{proof}[Proof of Theorem~\ref{thm:lin-const-orbits}.]
  Let $p\in\C_y\times\C^n_y$. According to
  Proposition~\ref{prop:orbit-ideal-deg} it suffices to show that
  $\deg\Ob_p$ is uniformly bounded independent of $p$. In the case
  that $p\in\Sing\xi$ this is obvious. By
  Proposition~\ref{prop:lin-orbit-types} it remains to consider the
  case $\Ob_p=\tilde X_V$ where $\cR_{X_V}=\iota(\cR_{V_y})$ and
  $V_y\subset\A^n(\C)$ is the Zariski closure of an orbit $G\cdot y$
  for some $y\in\A^n(\C)$.

  By Lemma~\ref{lem:G-orbits-const} $\{\cR_{V_y} : y\in\A^n(\C)\}$ is
  a $\C$-constructible subset of the disjoint union of finitely many
  Chow varieties $\scC^n_{k,d}$. It will be enough to show that
  $\deg\tilde X_V$ is uniformly bounded for $\cR_V$ in this set.
  Moreover since every constructible set can be stratified into
  finitely many smooth strata \cite[Theorem~5.38]{bpr:algorithms} it
  will be enough to consider $\cR_V\in M$ for some complex connected
  manifold $M\subset\scC^n_{k,d}$ with fixed $k,d$. By
  Lemma~\ref{lem:ht-vs-eqs} it will suffice to show that $\height X_V$
  is bounded uniformly over $\cR_V\in M$.

  We may assume that $M$ is contained in some affine chart $c_k\neq0$
  where $c_k$ is one of the homogeneous Chow coordinates (otherwise
  cover $M$ by finitely many such charts). Without loss of generality
  $k=0$ and we consider the affine coordinates $b_k=c_k/c_0$ on
  $\scC^n_{k,d}$. It will suffice to show that $b_k(\cR_{X_V})$ are
  all rational functions in $t$ of degree bounded uniformly over
  $\cR_V\in M$. Recall that
  \begin{equation}
    \cR_{X_V} = \iota(\cR_V) = \Phi\circ\cR_V\in\scC^n_{k,d}(\C(t))
  \end{equation}
  Assume for simplicity of the notation that $t_0\in U$ has been chosen so
  that $\Phi(t_0)$ is the identity matrix. Then
  \begin{equation}
    f_k:M\times U\to\C, \qquad f_k(\cR_V,t):=b_k(\cR_{X_V})=b_k(\Phi(t)\circ \cR_V)
  \end{equation}
  is holomorphic in a neighborhood of $M\times\{t_0\}$. Moreover since
  $\cR_{X_V}$ is a $\C(t)$-rational point of $\scC^n_{k,d}$ for every
  $\cR_V\in M$ and $b_k$ are affine coordinates, we see that
  $f_k(\cR_V,t)=b_k(\cR_{X_V})\in\C(t)$ is rational of some degree
  $\delta(\cR_V)$ as a function of $t$ for each fixed $\cR_V\in M$. By
  Corollary~\ref{cor:uniform-rat} we deduce that the degrees
  $\delta(\cR_V)$ are uniformly bounded over $\cR_V\in M$ thus
  concluding the proof.
\end{proof}

\subsection{Planar differential equations}
\label{sec:planar}

Consider a planar differential equation
\begin{equation}\label{eq:planar}
  \xi = P\pd{}x+Q\pd{}y, \qquad P,Q\in\C[x,y]
\end{equation}
and denote $m=\max(\deg P,\deg Q)$. For $p\in\C^2$, the orbit closure
$\Ob_p$ may be
\begin{itemize}
\item the singleton $\{p\}$, if $p$ is a singular point;
\item a curve, if $p$ is non-singular but lies on an invariant
  algebraic curve of $\xi$, i.e. an algebraic curve invariant under
  the flow of $\xi$;
\item or the whole plane otherwise.  
\end{itemize}

\begin{Thm}\label{thm:planar-const-orbits}
  The vector field~\eqref{eq:planar} has constructible orbits in
  $\C^2$.
\end{Thm}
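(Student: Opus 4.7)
The plan is to apply Proposition~\ref{prop:orbit-ideal-deg}, reducing the statement to a uniform bound---over all $p\in\C^2$---on the degree in which the orbit-closure ideal $I_{\Ob_p}$ is generated. Since $\Ob_p$ is irreducible and $\dim\C^2=2$, I split the proof according to $\dim\Ob_p\in\{0,1,2\}$. For $p\in\Sing\xi$ one has $\Ob_p=\{p\}$ with ideal generated in degree $1$; when the trajectory through $p$ is Zariski dense in $\C^2$ one has $\Ob_p=\C^2$ and $I_{\Ob_p}=(0)$; so the entire content of the theorem lies in the middle case, where $\Ob_p$ is an irreducible $\xi$-invariant algebraic curve whose ideal is principal and therefore generated in degree $\deg\Ob_p$.

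The main ingredient I would invoke is the classical Darboux--Jouanolou dichotomy for planar polynomial vector fields: either $\xi$ has only finitely many irreducible invariant algebraic curves, or $\xi$ admits a rational first integral $R=f/g$ with $f,g\in\C[x,y]$. In the first case there are only finitely many possibilities for a one-dimensional orbit closure, so $\deg\Ob_p$ is trivially bounded by the maximum of their degrees. In the second case, $R$ is constant along every trajectory, so every one-dimensional orbit closure must be an irreducible component of some level set $\{\lambda f-\mu g=0\}$ with $(\lambda:\mu)\in\P^1$; hence its degree is at most $\max(\deg f,\deg g)$.

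Combining the two regimes yields a bound $\deg\Ob_p\le N$ with $N$ depending only on $\xi$, which by Proposition~\ref{prop:orbit-ideal-deg} is exactly the constructible-orbits condition. I expect the main technical point---really the only substantive one---to be the correct invocation of Darboux--Jouanolou and the verification that in the first-integral case every irreducible component of every level set has degree bounded by $\max(\deg f,\deg g)$, which in turn reduces to the elementary fact that the degrees of the irreducible components of $\{\lambda f-\mu g=0\}$ sum to the degree of that defining polynomial.
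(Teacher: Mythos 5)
Your proposal is correct and follows essentially the same route as the paper: reduce via Proposition~\ref{prop:orbit-ideal-deg} to a uniform degree bound on $\Ob_p$, then invoke the Darboux--Jouanolou dichotomy (finitely many invariant algebraic curves, or a rational first integral whose level sets contain all one-dimensional orbit closures). Your explicit case split by $\dim\Ob_p$ and the remark on degrees of components of level sets are just a slightly more detailed unpacking of the paper's argument.
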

\begin{proof}
  If~\eqref{eq:planar} has finitely many invariant algebraic curves
  then we are done by Proposition~\ref{prop:orbit-ideal-deg}. On the
  other hand, according to a theorem of Jouanolou \cite{jouanolou}
  (following work of Darboux) if~\eqref{eq:planar} admits at least
  $2+m(m+1)/2$ invariant algebraic curves then it admits a rational
  first integral $R$ (i.e. a rational function invariant under the
  flow of $\xi$). Then every orbit closure $\Ob_p$ is either a
  singleton (if $p\in\Sing\xi$) or an algebraic curve contained in the
  level set $\{R(x,y)=R(p)\}$, in which case $\deg\Ob_p\le \deg R$.
  Thus we are done by Proposition~\ref{prop:orbit-ideal-deg}.
\end{proof}

\subsection{The $j$-function and related systems}
\label{sec:j-func}

\subsubsection{Preliminaries on $j$ and its differential equation}

Recall that $j:\H\to\C$ is a surjective holomorphic function invariant
under the action of $\SL(2,\Z)$ on $\H$. We note also that the set of
critical values of $j$ is $\{0,1728\}$. It is known that $j$ satisfies
a third-order differential equation. To introduce this equation, we
recall first the notion of the \emph{Schwarzian derivative} $S(f)$ of
a holomorphic function $f$, defined as
\begin{equation}
  S(f) := \(\frac{f''}{f'}\)' -\frac12 \(\frac{f''}{f'}\)^2.
\end{equation}
The Schwarzian derivative satisfies the following chain-rule type
relation
\begin{equation}
  S(f\circ g) = (g')^2[S(f)\circ(g)]+S(g).
\end{equation}
The solutions of the differential equation $S(f)=0$ are given
precisely by the fractional linear transformations $f(z)=g\cdot z$ for
$g\in\SL(2,\C)$.

The $j$ function satisfies the non-linear differential equation
$\chi(j)=0$ where (see \cite[page~20]{masser:heights})
\begin{equation}
  \chi(f) :=  S(f)+R(f)(f')^2, \qquad R(f) := \frac{f^2-1968f+2654208}{2f^2(f-1728)^2}.
\end{equation}
As observed in \cite{fs:j-func}, the general solution for the
equation $\chi(f)=0$ takes the form $f(z)=j(g\cdot z)$ for
$g\in\SL(2,\C)$. Indeed, let $f(z)$ be any solution. Pick some point
$z_0$ where $f(z_0)\not\in\{0,1728\}$, i.e. a non-critical value of
$j$. Then locally around $z_0$ one can write $f(z)=j\circ\phi(z)$ for
some (locally) holomorphic lifting $\phi(z)$. But then
\begin{equation}
  \begin{aligned}
      0&=\chi(f)=\chi(j\circ\phi) = S(j\circ\phi)+R(j\circ\phi)(j'\circ\phi\cdot\phi')^2\\
      &=(\phi')^2[\chi(j)\circ\phi]+S(\phi)=S(\phi)
  \end{aligned}
\end{equation}
which implies that $\phi(z)=g\cdot z$ for $g\in\SL(2,\C)$.

\subsubsection{Constructible orbits for $\chi=0$.}

The equation $\chi(f)=0$ can be re-written in the form
\begin{equation}\label{eq:chi-rational}
  f'''=A(f,f',f''), \qquad A(f,f',f'') := R(f)(f')^3+\frac{3(f'')^2}{2f'}.
\end{equation}
We note that no solution of the original equation $\chi(f)=0$ belongs
to the polar locus $\{f=0,1728\}\cup\{f'=0\}$ since the equation
admits no constant solutions. We let
$q(f,f',f''):=f^3(f-1728)^3(f')^2$, so that $\tilde A=qA$ is
polynomial in $f,f',f''$ and vanishes on the polar locus of $A$.

We choose the ambient space to be $\C_t\times\C^3$ with the coordinates
$t,y,\dot y,\ddot y$. Let
\begin{equation}\label{eq:j-func-xi}
  \xi := q(y,\dot y,\ddot y)[\pd{}{t}+\dot y \pd{}y+\ddot y\pd{}{\dot y}+A(y,\dot y,\ddot y)\pd{}{\ddot y}].
\end{equation}
Then $\Sing\xi=\{q=0\}$. Thus $\xi$ corresponds to the equation
$\chi(f)=0$ in the sense that for any solution $f(z)$ the map
$z\to(z,f,f',f'')$ forms a parametrized trajectory of $\xi$, and every
trajectory through a non-singular point is described in this way.

We recall the following result of Nishioka \cite{nishioka:mahler}.

\begin{Thm}[\protect{\cite[Theorem on page~1]{nishioka:mahler}}]\label{thm:nishioka}
  Let $G\subset\SL(2,\C)$ be a Zariski dense subgroup and
  $D\subset\C P^1$ a $G$-invariant domain. Let $f:D\to\C$ be a
  non-constant holomorphic functions and suppose that it is
  $G$-automorphic, i.e. $f(z)=f(g\cdot z)$ for any $g\in G$. Then $f$
  satisfies no second order algebraic differential equation over
  $\C(t)$.
\end{Thm}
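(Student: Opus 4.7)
The plan is to argue by contradiction: suppose $f$ satisfies a nontrivial second-order algebraic ODE over $\C(t)$, which after clearing denominators can be written as $P(z,f(z),f'(z),f''(z))\equiv 0$ for some nonzero $P\in\C[t,y_0,y_1,y_2]$. The goal is to extract from the $G$-automorphy enough constraints to force $P\equiv 0$.

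First I would exploit the identity $f\circ g=f$ for $g=\bigl(\begin{smallmatrix}a&b\\c&d\end{smallmatrix}\bigr)\in G$. A single application of the chain rule gives $f'(gz)=(cz+d)^2 f'(z)$; differentiating again produces $f''(gz)=(cz+d)^4 f''(z)+2c(cz+d)^3 f'(z)$. Substituting these into $P(gz,f(gz),f'(gz),f''(gz))=0$ and clearing the $(cz+d)^{\deg P}$ denominator yields, for every $g\in G$ and $z\in D$, a polynomial identity in the entries of $g$ subject to $ad-bc=1$. Zariski density of $G$ in $\SL(2,\C)$ then extends the identity to every $g\in\SL(2,\C)$.

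Next I would fix a generic $z_0\in D$ with $f'(z_0)\neq 0$ (possible since $f$ is nonconstant) and study the map
\begin{equation*}
\Psi\colon\SL(2,\C)\to\C^4,\quad g\mapsto\bigl(gz_0,\,f(z_0),\,(cz_0+d)^2 f'(z_0),\,(cz_0+d)^4 f''(z_0)+2c(cz_0+d)^3 f'(z_0)\bigr),
\end{equation*}
whose image by the previous step lies in $V:=\{P=0\}$ and automatically in the hyperplane $H_{z_0}:=\{y_0=f(z_0)\}\subset\C^4$. Parametrizing $\SL(2,\C)$ on the open subset $\{cz_0+d\neq 0\}$ by $(gz_0,c,d)$, one verifies that $\Psi$ is generically injective (the nonvanishing of $f'(z_0)$ is what makes the second and third coordinates independent). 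Hence the image is three-dimensional and Zariski dense in the three-dimensional hyperplane $H_{z_0}$, forcing $P(t,f(z_0),y_1,y_2)\equiv 0$ as a polynomial in $t,y_1,y_2$.

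Finally, since $f$ is nonconstant, $f(z_0)$ takes infinitely many values as $z_0$ ranges over a small disk, so the specialization $y_0=f(z_0)$ kills $P$ for infinitely many values of $y_0$, forcing $P\equiv 0$ and giving the required contradiction. I expect the main technical step to be checking that $\Psi$ has three-dimensional image; this is an elementary rank computation once $f'(z_0)\neq 0$ is in hand, but the explicit parametrization of $\SL(2,\C)$ and the $c$-dependence in the last coordinate of $\Psi$ must be carried out carefully so that no degenerate locus is missed.
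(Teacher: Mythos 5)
The paper does not prove Theorem~\ref{thm:nishioka}; it is quoted as a result of Nishioka (the citation is to his differential-algebraic proof, which proceeds via Kolchin's theory of strongly normal extensions). So there is no in-paper argument to match yours against, and I will evaluate the proposal on its own terms.

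Your argument is correct and, as far as I can tell, genuinely more elementary than Nishioka's. The mechanism is a clean dimension count: after using $G$-automorphy and the transformation laws of $f,f',f''$ under M\"obius substitution, Zariski density promotes the identity $P(\Psi(g))=0$ from $g\in G$ (after clearing the $(cz_0+d)$-denominators) to all of $\SL(2,\C)$. The image of $\Psi$ on $\{cz_0+d\neq 0\}$ is constructible and, by the rank computation, three-dimensional. In the coordinates $(\tau,c,d)$ with $u=cz_0+d$, the Jacobian of $(c,d)\mapsto(y_1,y_2)$ is $-4u^4 f'(z_0)^2$, which is nonzero precisely when $f'(z_0)\neq 0$ and $u\neq 0$; combined with the free $\tau$-coordinate, this gives generic rank three. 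Hence the image is Zariski dense in the three-dimensional hyperplane $H_{z_0}$, forcing $P(t,f(z_0),y_1,y_2)\equiv 0$; then letting $f(z_0)$ sweep out infinitely many values kills $P$. Note in passing why the argument cannot be pushed to order three: $\dim\SL(2,\C)=3$ matches the dimension of $H_{z_0}\subset\C^4$, but the analogous hyperplane in $\C^5$ would be four-dimensional, so density fails --- consistent with $j$ satisfying the third-order equation $\chi(j)=0$.

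One small imprecision: ``$\Psi$ is generically injective'' is not quite right. Since $g$ and $-g$ give the same values of $gz_0$, $(cz_0+d)^2$, and $c(cz_0+d)^3$, the map is generically $2$-to-$1$, with the fiber being a $\{\pm I\}$-coset. This does not matter for the argument; what you actually use is that the image is three-dimensional, which the Jacobian computation establishes. Also, ``the second and third coordinates'' in your justification should read ``the third and fourth coordinates'' of $\Psi$ (the $y_1,y_2$ slots), since the second coordinate $f(z_0)$ is the constant one. These are cosmetic; the proof itself is sound.
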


The following is a direct corollary.

\begin{Cor}
  The vector field \eqref{eq:j-func-xi} is defined over $\Q$ and has
  constructible orbits in $\C_t\times\C^3$.
\end{Cor}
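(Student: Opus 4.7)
The plan has two parts. For $\Q$-definability I would simply unpack the formulas: the polynomial $q = y^3(y-1728)^3(\dot y)^2$ has integer coefficients, and substituting the explicit rational function $R(y) = (y^2 - 1968y + 2654208)/(2y^2(y-1728)^2)$ into $\tilde A = qA = qR(\dot y)^3 + 3q(\ddot y)^2/(2\dot y)$ gives, after the obvious cancellations against the factors of $q$, a polynomial in $y, \dot y, \ddot y$ with rational coefficients. So $\xi$ is defined over $\Q$.

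For constructible orbits I would invoke Proposition~\ref{prop:orbit-ideal-deg} and show that the ideals $I_{\Ob_p}$ are generated in some degree $N$ independent of $p$. There are only two cases. If $p \in \Sing \xi$, then $\Ob_p = \{p\}$ and $I_{\Ob_p}$ is generated in degree one. If $p \notin \Sing \xi$, then $y(p) \notin \{0, 1728\}$ and $\dot y(p) \neq 0$, so we can solve the ODE locally and a parametrized trajectory through $p$ takes the form $z \mapsto (z, f(z), f'(z), f''(z))$ for some solution $f$ of $\chi(f) = 0$. By the discussion preceding the corollary, $f(z) = j(g \cdot z)$ for some $g \in \SL(2, \C)$.

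The heart of the argument is then a direct application of Theorem~\ref{thm:nishioka}. Since $j$ is $\SL(2,\Z)$-automorphic on $\H$, the function $f(z) = j(gz)$ is $G$-automorphic on the $G$-invariant domain $D := g^{-1}(\H)$, where $G := g^{-1}\SL(2,\Z)g$. Conjugation by $g$ preserves Zariski closures, so $G$ is Zariski dense in $\SL(2,\C)$. Nishioka's theorem then says that $f$ satisfies no second-order algebraic differential equation over $\C(t)$, i.e.\ no non-zero polynomial $P \in \C[t, y, \dot y, \ddot y]$ vanishes identically along the curve $(z, f(z), f'(z), f''(z))$. Therefore the Zariski closure $\Ob_p$ of this trajectory is contained in no hypersurface of $\C_t \times \C^3 \cong \C^4$, so $\Ob_p = \C_t \times \C^3$ and $I_{\Ob_p} = (0)$.

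Combining the two cases, every $I_{\Ob_p}$ is generated in degree at most one, and Proposition~\ref{prop:orbit-ideal-deg} yields constructible orbits. I anticipate that the only point that genuinely needs to be checked is the conversion from Nishioka's statement to a geometric one: one must note that absence of a second-order algebraic ODE over $\C(t)$ is equivalent to absence of any polynomial relation in $t, y, \dot y, \ddot y$, i.e.\ to the trajectory being Zariski dense in $\C^4$. Everything else, including verifying the group-theoretic hypothesis of Theorem~\ref{thm:nishioka} after conjugation by an arbitrary element of $\SL(2,\C)$, is routine.
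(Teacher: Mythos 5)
Your proof is correct and follows essentially the same path as the paper: split on whether $p$ is singular, invoke the fact that solutions of $\chi(f)=0$ are exactly $j(g\cdot z)$, and apply Theorem~\ref{thm:nishioka} with the conjugate group $g^{-1}\SL(2,\Z)g$ on $g^{-1}\H$ to conclude that non-singular orbit closures are all of $\C_t\times\C^3$. The only cosmetic difference is that you route the conclusion through Proposition~\ref{prop:orbit-ideal-deg} and spell out the $\Q$-definability calculation, both of which the paper treats as immediate.
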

\begin{proof}
  Let $p\in\C_t\times\C^3$. If $p\in\Sing\xi$ then $\Ob_p=\{p\}$.
  Otherwise the trajectory through $p$ is locally given as the image
  of the map $z\to(z,f,f',f'')$ where $f$ is a solution of
  $\chi(f)=0$, i.e. $f=j(g\cdot z)$ for some $g\in\SL(2,\C)$. We claim
  that in this case $\Ob_p=\C_t\times\C^3$. Otherwise, there would
  exist a non-zero polynomial $P\in\C[t,y,\dot y,\ddot y]$ vanishing
  identically on $\O_p$, and in particular $P(t,f,f',f'')\equiv0$.
  This possibility is ruled out by Theorem~\ref{thm:nishioka}, since
  $f=j(g\cdot z)$ is automorphic with respect to $g^{-1}\SL(2,\Z)g$ on
  the domain $g^{-1}\H$.
\end{proof}

\begin{Rem}
  Nishioka \cite[Theorem on page~1]{nishioka:mahler} proves that
  Theorem~\ref{thm:nishioka} remains true also if one considers
  equations over the field $\C(t,e^{at})$ where $a$ is any fixed
  complex number. One can similarly construct a vector field with
  trajectories of the form $(z,e^{az},j(z),j'(z),j''(z))$ and show
  that it has constructible orbits in $\C^5$.
\end{Rem}

\subsubsection{Geodesically independent $j$-translates.}
\label{sec:j-func-geodesic}

Let $r_1,\ldots,r_n\in\Qa(t)$ be $n$ non-constant rational functions
with algebraic coefficients and suppose that there exists some
$t_0\in\C$ such that $r_i(t_0)\in\H$ for $r=1,\ldots,n$. Following
Pila \cite{pila:modular-ax}, we will say that $r_1,\ldots,r_n$ are
\emph{$\L$-geodesically independent\footnote{Pila uses ``geodesically
    independent'' for what we call $\Q$-geodesically independent.}}
for a field $\L\subset\C$ if there exists no relation of the form
\begin{equation}
  r_i\equiv g r_j, \qquad g\in\GL(2,\L), \quad i\neq j.
\end{equation}

\begin{Prop}\label{prop:xi-translates}
  There exists a vector field $\xi$ on $\C_t\times\C^{3n}$ defined
  over $\Qa$ whose trajectories (except the singular points) locally
  coincide with the images of the maps
  \begin{equation}\label{eq:xi-translates-general-sol}
    t\to(t,\ j(a_1),j'(a_1),j''(a_1)\ ,\cdots,
    \ j(a_n),j'(a_n),j''(a_n)).
  \end{equation}
  where $a_k(t)=g_k\cdot r_k(t)$ and $g_1,\ldots,g_n\in\GL(2,\C)$.
  Moreover none of these maps have image contained in $\Sing\xi$.
\end{Prop}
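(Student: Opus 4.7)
The plan is to generalize the construction of~\eqref{eq:j-func-xi} from a single solution of $\chi(f)=0$ to an $n$-tuple of $j$-translates. The main tool is the Schwarzian chain rule $S(F\circ\phi)=(\phi')^2[S(F)\circ\phi]+S(\phi)$, which combined with $\chi(j)=0$ yields
\begin{equation}
  \chi(j\circ\phi)=S(\phi)
\end{equation}
for every holomorphic $\phi$. Specializing to $\phi(t)=g_k\cdot r_k(t)$ and using M\"obius invariance $S(g\cdot r)=S(r)$, this shows that each $f_k(t):=j(g_k\cdot r_k(t))$ satisfies the third-order ODE
\begin{equation}\label{eq:plan-fk-ode}
  f_k'''=\frac{3(f_k'')^2}{2f_k'}-R(f_k)(f_k')^3+S(r_k)(t)\cdot f_k',
\end{equation}
whose coefficients lie in $\Qa(t)$ and do not depend on $g_k$.

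On $\C_t\times\C^{3n}$, using coordinates $(t,y_k,\dot y_k,\ddot y_k)_{k=1}^n$, I would form the formal lift
\begin{equation}
  \xi_0=\pd{}{t}+\sum_{k=1}^n\left[\dot y_k\pd{}{y_k}+\ddot y_k\pd{}{\dot y_k}+A_k\pd{}{\ddot y_k}\right],
\end{equation}
where $A_k=\frac{3(\ddot y_k)^2}{2\dot y_k}-R(y_k)(\dot y_k)^3+S(r_k)(t)\cdot\dot y_k$. Choosing $q_k(t)\in\Qa[t]$ as a denominator of $S(r_k)$ and setting $Q(y,\dot y):=\prod_k 2y_k^2(y_k-1728)^2\dot y_k$ to clear the denominators of the $A_k$, the vector field $\xi:=\prod_k q_k(t)\cdot Q(y,\dot y)\cdot\xi_0$ is polynomial on $\C_t\times\C^{3n}$ over $\Qa$, with $\Sing\xi$ contained in the vanishing locus of $\prod_k q_k(t)\cdot Q(y,\dot y)$.

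For the trajectory property, the curve $\phi(t)=(t,f_1,f_1',f_1'',\ldots,f_n,f_n',f_n'')$ satisfies $\phi'(t)=\xi_0(\phi(t))$ by~\eqref{eq:plan-fk-ode}, hence $\phi'(t)$ is complex proportional to $\xi(\phi(t))$ and $\phi$ is a parametrized trajectory. Conversely, any trajectory of $\xi$ off $\Sing\xi$ is an integral curve of $\xi_0$, and its $k$-th block satisfies~\eqref{eq:plan-fk-ode}. Non-containment in $\Sing\xi$ is immediate: each of $y_k$, $y_k-1728$, $\dot y_k$, and $q_k(t)$ is non-constant along the map~\eqref{eq:xi-translates-general-sol}, so a generic $t$ near the $t_0$ provided by the hypothesis avoids their simultaneous vanishing.

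The main obstacle is the backward direction, showing that every local solution of $\chi(f)=S(r_k)$ has the form $j(g\cdot r_k)$ for some $g\in\operatorname{PGL}(2,\C)$. Following the argument of~\secref{sec:j-func}, away from the critical values of $j$ one can write $f=j\circ\phi$ locally; the identity $\chi(j\circ\phi)=S(\phi)$ forces $S(\phi)=S(r_k)$, whose solutions are precisely the $\operatorname{PGL}(2,\C)$-translates of $r_k$. Since~\eqref{eq:plan-fk-ode} is a third-order equation with a three-dimensional local solution space and $\dim\operatorname{PGL}(2,\C)=3$, no further solutions exist, identifying the trajectories of $\xi$ with the family~\eqref{eq:xi-translates-general-sol}.
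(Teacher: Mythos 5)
Your proof is correct and takes essentially the same approach as the paper: you derive the third-order equation $\chi(f_k)=S(r_k)$ for $f_k=j(g_k\cdot r_k)$ from the Schwarzian cocycle and M\"obius invariance of $S$ (the paper phrases the identical computation as the change of variable $w=r_k(t)$), and then assemble $\xi$ on $\C_t\times\C^{3n}$ by taking $n$ copies of the lifted equations sharing the common time variable $t$, clearing denominators over $\Qa(t)$. You in fact supply details the paper leaves to the reader — the explicit form of $A_k$, the converse via local lifting through $j$, and non-containment of the trajectories in $\Sing\xi$ — and your sign in $f_k'''=\tfrac{3(f_k'')^2}{2f_k'}-R(f_k)(f_k')^3+S(r_k)f_k'$ corrects an apparent typo in the paper's display~\eqref{eq:chi-rational}.
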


\begin{proof}
  The proof of the Proposition is similar to the construction
  of~\eqref{eq:j-func-xi}. Namely, making a change of variable
  $w=r_k(t)$ and $\pd{}w=\frac{1}{r'(t)}\pd{}t$ we transforms the
  equation $\chi(f(w))=0$ into an equation $\chi_k(f(t))=0$ such that
  $\chi_k\in\C(t,f,f',f'')$ and the solutions of $\chi_k(f(t))=0$ are
  precisely the functions of the form $j(g\cdot r_k(t))$ for any
  $g\in\GL(2,\C)$. We then construct a corresponding differential
  equation
  \begin{equation}
    f'''=A_k(t,f,f',f''), \qquad A_k\in\C(t,f,f',f'')
  \end{equation}
  as in~\eqref{eq:chi-rational} (whose singular locus may also contain
  the set $\{r_k'(t)=0\}$). Finally we construct the vector field
  $\xi$ on $\C_t\times\C^{3n}$ by taking $n$ independent copies of the
  corresponding vectors fields~\eqref{eq:j-func-xi} (but all sharing
  the same time variable $t$). We leave the detailed derivation to the
  reader.
\end{proof}

We note in particular that the map
\begin{equation}
  t \to (t,\ j(r_1),j'(r_1),j''(r_1)\ ,\cdots,
    \ j(r_n),j'(r_n),j''(r_n))
\end{equation}
defined in a neighborhood of $t_0$ forms a parametrized trajectory of
the vector field $\xi$ defined in
Proposition~\ref{prop:xi-translates}.

We recall the following theorem from \cite{pila:modular-ax}.

\begin{Thm}[\protect{\cite[Theorem~1.1]{pila:modular-ax}}]\label{thm:pila-ax-lind}
  Suppose that $r_1,\ldots,r_n$ are $\Q$-geodesically independent.
  Then the $3n$ functions
  \begin{equation}
    j(r_1),j'(r_1),j''(r_1)\ ,\cdots,\ j(r_n),j'(r_n),j''(r_n)
  \end{equation}
  (defined in a neighborhood of $t_0$) are algebraically independent
  over $\C(t)$.
\end{Thm}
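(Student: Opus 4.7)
The plan is to follow Pila's strategy for the modular Ax-Lindemann-Weierstrass theorem with derivatives. Assume for contradiction that the $3n$ functions $j(r_i),j'(r_i),j''(r_i)$, $i=1,\ldots,n$, are algebraically dependent over $\C(t)$; equivalently, the image of the map $t\mapsto (t,j(r_i),j'(r_i),j''(r_i))_i$ is contained in a proper algebraic subvariety $V\subset\C_t\times\C^{3n}$. I would choose $V$ minimal (an irreducible component of the Zariski closure of the image) and aim to derive a geodesic relation among the $r_i$.

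The main reduction is to lift the picture to the universal cover. Let $\cF\subset\H$ be the standard fundamental domain for $\SL(2,\Z)$ and consider the uniformization
\begin{equation*}
  \Psi:\cF^n\times\C_t\to\C_t\times\C^{3n},\qquad (z_1,\ldots,z_n,t)\mapsto (t,j(z_i),j'(z_i),j''(z_i))_i.
\end{equation*}
By the classical definability of $j$ on $\cF$ (and of its derivatives), $\Psi$ is definable in the o-minimal structure $\R_{\text{an,exp}}$, so the preimage $\~V:=\Psi^{-1}(V)$ is a definable set whose projection to $\C_t$ is cofinal near $t_0$. For every $(\gamma_1,\ldots,\gamma_n)\in\SL(2,\Z)^n$ the analytic curve $t\mapsto(\gamma_1 r_1(t),\ldots,\gamma_n r_n(t),t)$ lies in $\~V$ wherever its image falls in $\cF^n$. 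Intersecting with special points $t$ where each $r_i(t)$ is a pre-image of a CM point of large discriminant produces a lot of algebraic points in $\~V$ of controlled height, thus Pila-Wilkie counting forces $\~V$ to contain positive-dimensional semi-algebraic blocks; by taking Zariski closures in $\H^n\times\C_t$ these blocks lie in a weakly special subvariety $W\subset\H^n\times\C_t$ containing the graph of $(r_1,\ldots,r_n)$.

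The final step is to use the classification of weakly special subvarieties of $Y(1)^n$ lifted with derivative data: such a $W$ is cut out by coordinate-constancy conditions and by modular relations $z_i=g_{ij}\cdot z_j$ for $g_{ij}\in\GL(2,\Q)^+$. Since the curve $t\mapsto(r_1(t),\ldots,r_n(t))$ lies in $W$ and no $r_i$ is constant, such a relation yields $r_i\equiv g\cdot r_j$ for some $g\in\GL(2,\Q)$ and some $i\neq j$, contradicting $\Q$-geodesic independence. The main obstacle is the counting step: one must upgrade Pila-Wilkie to handle $\~V$ with the derivatives $j',j''$ present (controlling heights of algebraic points coming from CM-like data with their derivative values) and one must verify the classification of weakly special subvarieties in the derivative-enriched setting, which is where the heaviest input from Pila's earlier work on modular Ax-Lindemann-Weierstrass with derivatives enters.
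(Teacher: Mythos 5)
The paper does not prove this theorem; it is recalled from Pila's paper on modular Ax-Lindemann-Weierstrass with derivatives (cited as \cite{pila:modular-ax}) and used as a black box to establish constructible orbits in Corollary~\ref{cor:xi-translates-constructible}. So there is no internal proof to compare against. Your sketch is an attempt to reconstruct Pila's external argument, and its general shape (pass to the uniformization, exploit definability of $j,j',j''$ on a fundamental domain in $\R_{\text{an,exp}}$, apply Pila--Wilkie, then classify the resulting weakly special subvariety and read off a $\GL(2,\Q)$ relation among the $r_i$) is the right one.

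However, the counting step as you describe it is wrong. You propose to harvest algebraic points in $\~V$ by specializing $t$ so that each $r_i(t)$ lies over a CM point of large discriminant. CM points are the mechanism behind Andr\'e--Oort, not Ax-Lindemann: on a fixed algebraic curve $t\mapsto(r_1(t),\ldots,r_n(t))$ one has no control over which $t$ give CM values or over their discriminants, and those $t$ would in any case be $\Qa$-points rather than the $\Q^k$-rational points that Pila--Wilkie counts. The correct source of rational points is the $\SL(2,\Z)^n$-orbit of the algebraic curve $W$ traced by $(r_1,\ldots,r_n,t)$ in $\H^n\times\C_t$: one counts the $\gamma\in\SL(2,\Z)^n$ of bounded height for which $\gamma\cdot W$ meets the fundamental polydisc, shows this count has polynomial growth, identifies these $\gamma$ with rational points in a suitable definable set, and only then invokes Pila--Wilkie to force positive-dimensional blocks into the preimage of $V$. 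Replacing the CM step by this $\SL(2,\Z)$-orbit count is essential; the closing steps of your outline (classifying blocks as weakly special in the derivative-enriched setting and extracting a relation $r_i\equiv g\cdot r_j$ with $g\in\GL(2,\Q)$, contradicting $\Q$-geodesic independence) are in the right direction, subject to the caveat you already flag about verifying the classification with the $j',j''$ coordinates present.
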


The following is a direct corollary.

\begin{Cor}\label{cor:xi-translates-constructible}
  Suppose that $r_1,\ldots,r_n$ are $\C$-geodesically independent.
  Then the vector field defined in
  Proposition~\ref{prop:xi-translates} has constructible orbits in
  $\C_t\times\C^{3n}$.
\end{Cor}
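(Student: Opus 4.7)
The plan is to invoke Proposition~\ref{prop:orbit-ideal-deg} by showing that the orbit-closure ideals $I_{\Ob_p}$ for $p\in\C_t\times\C^{3n}$ are generated in uniformly bounded degree. In fact I expect to prove the stronger claim that every such ideal is generated in degree at most $1$, as they will all turn out to be either a maximal ideal of a point or the zero ideal.

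The case $p\in\Sing\xi$ is immediate: then $\Ob_p=\{p\}$, whose ideal is generated in degree $1$. If $p\notin\Sing\xi$, then by Proposition~\ref{prop:xi-translates} the trajectory through $p$ coincides locally with the image of a map of the form
\[
t\mapsto(t,\ j(a_1),j'(a_1),j''(a_1),\ \ldots,\ j(a_n),j'(a_n),j''(a_n))
\]
for some $g_1,\ldots,g_n\in\GL(2,\C)$, where $a_k:=g_k\cdot r_k$. The key observation is that the functions $a_1,\ldots,a_n$ inherit $\C$-geodesic independence from $r_1,\ldots,r_n$: indeed, any relation $a_i\equiv h\cdot a_j$ with $h\in\GL(2,\C)$ and $i\neq j$ would unwind to $r_i\equiv(g_i^{-1}hg_j)\cdot r_j$, with $g_i^{-1}hg_j\in\GL(2,\C)$, contradicting the hypothesis. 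Since $\C$-geodesic independence is a priori stronger than $\Q$-geodesic independence, Theorem~\ref{thm:pila-ax-lind} applies to $a_1,\ldots,a_n$ and yields algebraic independence over $\C(t)$ of the $3n$ functions $j(a_k),j'(a_k),j''(a_k)$.

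Consequently, no non-zero polynomial in $\C[t,y_1,\dot y_1,\ddot y_1,\ldots,y_n,\dot y_n,\ddot y_n]$ can vanish identically on the trajectory, so its Zariski closure $\Ob_p$ must be all of $\C_t\times\C^{3n}$, whose defining ideal is zero. Combining the two cases, every $I_{\Ob_p}$ is generated in degree at most $1$ and Proposition~\ref{prop:orbit-ideal-deg} delivers the constructible orbits condition. Rationality over $\Qa$ has already been handled in Proposition~\ref{prop:xi-translates}.

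The one step I expect to require some care is justifying that \emph{every} non-singular trajectory of $\xi$ admits a parameterization of the above form, since Proposition~\ref{prop:xi-translates} is phrased relative to the specific basepoint $t_0$ at which $r_k(t_0)\in\H$. This should follow directly from the construction of the $n$ decoupled ODEs underlying $\xi$: the general solution of each is exactly $f(t)=j(g\cdot r_k(t))$ with $g\in\GL(2,\C)$, and any non-singular initial data is realized by a suitable $g_k$ (choosing a $j$-preimage and matching the first two derivatives, with at most finite ambiguity). Once this is in place the remainder of the argument reduces to the routine verifications above.
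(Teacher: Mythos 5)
Your proof is correct and follows the same line of reasoning as the paper's: dispose of the singular case trivially, recognize via Proposition~\ref{prop:xi-translates} that a non-singular trajectory is the image of $t\mapsto(t,j(a_k),j'(a_k),j''(a_k))$ with $a_k=g_k\cdot r_k$, observe that the $a_k$ inherit $\C$-geodesic independence (by the $g_i^{-1}hg_j$ conjugation trick), invoke Theorem~\ref{thm:pila-ax-lind} to rule out any algebraic relation over $\C(t)$, and conclude $\Ob_p=\C_t\times\C^{3n}$. The paper states this rather tersely while you supply the bookkeeping (explicitly routing through Proposition~\ref{prop:orbit-ideal-deg} and noting the degree-$\le1$ generation); both are the same argument, and the concern you flag at the end is already covered by the wording of Proposition~\ref{prop:xi-translates}, which asserts that \emph{every} non-singular trajectory has such a parametrization.
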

\begin{proof}
  Let $p\in\C_t\times\C^{3n}$. If $p\in\Sing\xi$ then $\Ob_p=\{p\}$.
  Otherwise the trajectory through $p$ is locally given as the image
  of a map~\eqref{eq:xi-translates-general-sol}. We claim that in this
  case $\Ob_p=\C_t\times\C^{3n}$. Otherwise, there would exist a non-zero
  algebraic relation over $\C(t)$ between the functions
  \begin{equation}
    j(a_1),j'(a_1),j''(a_1)\ ,\cdots,\ j(a_n),j'(a_n),j''(a_n).
  \end{equation}
  where $a_k(t)=g_k\cdot r_k(t)$ and $g_1,\ldots,g_n\in\GL(2,\C)$. But
  these functions are $\C$-geodesically independent (because
  $r_1,\ldots,r_n$ are) so this contradicts
  Theorem~\ref{thm:pila-ax-lind}.
\end{proof}

We remark that while Theorem~\ref{thm:pila-ax-lind} requires only the
assumption of $\Q$-geodesic independence of the functions
$r_1,\ldots,r_n$, in Corollary~\ref{cor:xi-translates-constructible}
we must require $\C$-geodesic independence.

\subsection{Concluding remarks}

The examples presented in this section are not meant to give an
exhaustive list of the classical differential equations satisfying the
constructible orbits hypothesis. For instance, the Weierstrass $\wp$
and $\zeta$ functions satisfy differential equation with constructible
orbits. Pila's paper \cite{pila:modular-ax} contains a functional
independence result involving the $j,\wp$ and exponential functions,
making it possible to extend the results
of~\secref{sec:j-func-geodesic} to deal with these functions as well.

We remark also that while we focus in this paper on holomorphic
solutions of differential equations, it is often interesting to
consider the behavior of solutions in domains whose boundary contain a
singularity. In a paper to appear separately we show that if a linear
differential equation admits a regular singular point with
quasi-unipotent monodromy then Theorem~\ref{thm:main} can be extended
to domains whose boundary contains the singular point. The result
applies in particular to the Gauss-Manin connection (or Picard-Fuchs
system) of algebraic families defined over $\Qa$ and their sections
(e.g. abelian integrals), showing that such functions satisfy an
analog of Theorem~\ref{thm:main} in essentially arbitrary domains. It
is reasonable to expect that a result analogous to
Corollary~\ref{cor:density} would follow in such extended domains as
well.

\appendix
\section{Taylor coefficients of rational functions}
\label{appendix:rational}

We consider a power series $f(t)\in\C[[t]]$,
\begin{equation}
  f(t)=a_0+a_1t+\cdots \qquad a_k\in\C.
\end{equation}
If $f$ is not clear from the context we will write $a_k=a_k(f)$. We
say that $f$ is rational of degree $d$ if it is the Taylor series at
$t=0$ of a rational function $R(t)$ of degree $d$.

\begin{Thm}\label{thm:rat-conds}
  There exists a set of polynomials $\scR_d$ in the variables
  $\{a_k\}_{k\in\N}$ such that $f$ is rational of degree at most $d$
  if and only if $C(a_0(f),a_1(f),\ldots)=0$ for every $C\in\scR_d$.
\end{Thm}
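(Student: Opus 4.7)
The plan is to reduce rationality of degree at most $d$ to the existence of a linear recurrence on the coefficients $\{a_k\}$, and then to encode that existence as the vanishing of Hankel-type minors. The first step is to establish the equivalence: $f$ is rational of degree at most $d$ if and only if there exists a nonzero tuple $(q_0,\ldots,q_d)\in\C^{d+1}$ such that the polynomial $Q(t)=\sum_{j=0}^d q_j t^j$ satisfies $Q(t)f(t)\in\C[t]$ with $\deg(Qf)\le d$. The forward direction is immediate by taking $Q$ to be the denominator in some representation $f=P/Q$. For the reverse direction one writes $Q=t^k\tilde Q$ with $\tilde Q(0)\ne 0$; then $Qf=t^k(\tilde Q f)$, and since $Qf\in\C[t]$ has degree at most $d$ and its lowest-order term has order at least $k$, dividing by $t^k$ gives $\tilde Q f=\tilde P$ with $\deg\tilde P,\deg\tilde Q\le d-k$ and $\tilde Q(0)\ne 0$, exhibiting $f=\tilde P/\tilde Q$ as a rational function of degree at most $d$ regular at $0$.

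Next, extracting the coefficient of $t^n$ in $Q(t)f(t)$ for each $n>d$ translates the condition ``$\deg(Qf)\le d$'' into the infinite homogeneous linear system
\begin{equation}
\sum_{j=0}^d q_j\,a_{n-j}=0,\qquad n\ge d+1,
\end{equation}
in the $d+1$ unknowns $q_0,\ldots,q_d$. A nontrivial solution exists if and only if the coefficient matrix $M$---whose rows are indexed by $n\ge d+1$, whose columns are indexed by $0\le j\le d$, and whose $(n,j)$-entry is $a_{n-j}$---has rank at most $d$. Since $M$ has exactly $d+1$ columns, this rank condition is in turn equivalent, by the standard rank-versus-minor dictionary, to the vanishing of every $(d+1)\times(d+1)$ minor of $M$.

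I would therefore take $\scR_d$ to be the set of all $(d+1)\times(d+1)$ minors of $M$, parametrized by row-selections $d+1\le n_0<n_1<\cdots<n_d$, each such minor being the polynomial $\det\bigl((a_{n_i-j})_{0\le i,j\le d}\bigr)$ in the variables $\{a_k\}_{k\in\N}$. Chaining the three equivalences above yields the theorem. The only fiddly point is handling the case $q_0=0$ in the reverse direction of the first step, which is precisely what the factorization $Q=t^k\tilde Q$ takes care of; beyond that the argument is formal and does not present a genuine obstacle.
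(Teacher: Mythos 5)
Your proof is correct and follows a genuinely different route from the paper's. The paper keeps both the numerator $P$ and the denominator $Q$ as unknowns, imposes the truncated Pad\'e-type condition $P = fQ + O(t^N)$ for each $N > d$, and takes $\scR_d$ to be the union over $N$ of the maximal minors of the resulting $N\times 2(d+1)$ coefficient matrices; the reverse direction then has to reconcile the a priori different solutions $R_N = P_N/Q_N$ across $N$, which the paper does by an order-of-vanishing argument showing $R_i\equiv R_j$ once $i,j\ge 3d+1$. You instead eliminate the numerator from the outset, reducing rationality to the existence of a nonzero $Q$ of degree at most $d$ annihilating the tail of $Qf$; this is a homogeneous system in only $d+1$ unknowns governed by a single infinite-row Hankel-type matrix, and the rank-versus-minor dictionary then delivers the equivalence directly, with no Pad\'e-uniqueness step. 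This is essentially the classical Kronecker criterion for rationality via Hankel determinants, and it is both shorter and cleaner than the paper's argument. The one point that requires care, namely the possibility $q_0=0$, is correctly handled by your $Q=t^k\tilde Q$ factorization, which ensures the resulting rational function is regular at the origin as the definition of ``rational of degree $d$'' demands.
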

\begin{proof}
  Denote
  \begin{equation}
    P(t)=p_0+\cdots+p_dt^d, \qquad Q(t)=q_0+\cdots+q_dt^d.
  \end{equation}
  For $N>d$ we consider the system of equations
  \begin{equation}\label{eq:rational-approx}
    P(t) = f(t) Q(t) + O(t^N).
  \end{equation}
  This is a system of $N$ linear homogeneous conditions on the
  variables $p_i,q_i$ with coefficients depending on $a_i$. In
  particular, one can write a set of minors
  $C_\alpha^N\in\C[a_0,\ldots,a_{N-1}]$ such
  that~\eqref{eq:rational-approx} admits a non-zero solutions $P,Q$ if
  and only if $C^N_\alpha$ vanish for every $\alpha$. We take the set
  $\scR_d$ to be the union of the sets $\{C_\alpha^N\}$ for every
  $N$. If $f$ is rational of degree at most $d$ then the
  system~\eqref{eq:rational-approx} is solvable for every $N$, so all
  the conditions $C_\alpha^N$ vanish. Conversely, we suppose that
  these conditions all vanish and prove that $f$ is rational of degree
  at most $d$.

  Let $N>d$. Since the minors $C_\alpha^N$
  vanish,~\eqref{eq:rational-approx} admits a non-zero solution $P_N,Q_N$.
  Then $Q_N\not\equiv0$, because otherwise $P_N=O(t^N)$ which is
  impossible since $P_N$ must be a non-zero polynomial of degree $d<N$.
  Set $R_N:=P_N/Q_N$. Since $Q_N$ has order at most $d$ at $t=0$ we have
  from~\eqref{eq:rational-approx}
  \begin{equation}\label{eq:R_N-vs-f}
    R_N(t)-f(t) = O(t^{N-d}).
  \end{equation}
  We claim that for $i,j\ge 3d+1$ we have $R_i\equiv R_j$. Indeed,
  $R_i-R_j$ is a rational function of order at most $2d$ and
  by~\eqref{eq:R_N-vs-f} we have
  \begin{equation}
    R_i(t)-R_j(t) = O(t^{3d+1-d}) = O(t^{2d+1})
  \end{equation}
  which is possible only if $R_i-R_j\equiv0$. To conclude, if we set
  $R:=R_{3d+1}$ then~\eqref{eq:R_N-vs-f} shows that $R$ approximates
  $f$ to every order, hence $f\equiv R$ proving that $f$ is rational
  of degree at most $d$.
\end{proof}

\begin{Cor}\label{cor:uniform-rat}
  Let $M$ be a complex connected manifold, $t_0\in\C$ and $W$ an open
  neighborhood of $M\times\{t_0\}$ in $M\times\C$. Suppose that
  $f:W\to\C$ is holomorphic, and for every $p\in M$ the function
  $f_p(\cdot):=f(p,\cdot)$ is rational. Then the degrees of $f_p$ are
  uniformly bounded over $p\in M$.
\end{Cor}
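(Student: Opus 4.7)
The plan is to apply Theorem~\ref{thm:rat-conds} fiberwise together with a Baire category argument on $M$. For each $d\ge 0$, consider
\begin{equation*}
M_d := \{ p \in M : f_p \text{ is rational of degree at most } d \},
\end{equation*}
so that by hypothesis $M = \bigcup_{d\ge 0} M_d$. I would show that each $M_d$ is an analytic subset of $M$, and then invoke the fact that a Baire space cannot be written as a countable union of proper closed nowhere-dense subsets to conclude that some $M_{d_0}$ equals all of $M$.

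To verify that $M_d$ is analytic, first observe that the Taylor coefficient $a_k(f_p) = \tfrac{1}{k!}\partial_t^k f(p,t_0)$ depends holomorphically on $p$, by Cauchy's integral formula applied to $f$ on any polydisc centered at $(p,t_0)$ inside $W$. For every $C \in \scR_d$, the value $C(a_0(f_p),a_1(f_p),\ldots)$ involves only finitely many of the $a_k(f_p)$, so $p \mapsto C(a_\bullet(f_p))$ is a well-defined holomorphic function on $M$. Theorem~\ref{thm:rat-conds} then identifies $M_d$ with the common zero locus of these holomorphic functions on $M$, and hence $M_d$ is an analytic subset.

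The final step is the Baire argument: $M$ is a connected complex manifold, hence locally modelled on an open subset of some $\C^N$ and in particular a Baire space. A proper analytic subset of $M$ is closed and has empty interior, since any holomorphic function on the connected manifold $M$ that vanishes on a non-empty open set vanishes identically. If no $M_d$ equaled $M$, then $M = \bigcup_d M_d$ would be a countable union of closed nowhere-dense subsets, contradicting the Baire property. Therefore $M_{d_0}=M$ for some $d_0$, which is the desired uniform bound. There is no serious obstacle here; the only care required is in verifying the holomorphic parameter dependence of the Taylor coefficients, which is immediate from the joint holomorphicity of $f$, and in noting that each element of $\scR_d$ produces a genuine holomorphic function on $M$ because it depends on only finitely many $a_k$.
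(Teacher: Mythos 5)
Your proof is correct and follows essentially the same route as the paper: holomorphicity of the Taylor coefficients $a_k(p)$ in $p$, Theorem~\ref{thm:rat-conds} to convert ``rational of degree $\le d$'' into vanishing of countably many holomorphic conditions, and a Baire category argument on the connected manifold $M$. The only cosmetic difference is that you work directly with the sets $M_d$ (appealing to the fact that a countable intersection of analytic sets is analytic), whereas the paper argues by contradiction with a single nonvanishing condition $C_d\in\scR_d$ per degree $d$ and covers $M$ by the corresponding proper zero loci $V_d$, which avoids needing that countable-intersection fact; both are sound and amount to the same argument.
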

\begin{proof}
  We write a Taylor expansion $f_p(t)=\sum a_k(p) (t-t_0)^k$ where $a_k$ are
  holomorphic functions given by
  \begin{equation}
    a_k(p):M\to\C, \qquad a_k(p)=\frac1{k!}\pd{^k}{t^k} f(p,t)\rest{t=t_0}.
  \end{equation}
  By Theorem~\ref{thm:rat-conds} it is enough to prove that for some
  $d$, the conditions from $\scR_d$ vanish identically in $p$. Assume
  to the contrary that for every $d$ there exists a condition
  $C_d\in\scR_d$ such that $C_d:=C_d(f_p)$ is not identically
  vanishing as a function of $p$. Then the zero locus of $C_d(f_p)$ is
  a proper analytic subset $V_d$ of $M$, and in particular is nowhere
  dense and closed. On the other hand, by assumption $f_p$ is rational
  for every $p$, and Theorem~\ref{thm:rat-conds} implies that
  $M=\cup_d V_d$ contradicting the Baire category theorem.
\end{proof}

\bibliographystyle{plain} \bibliography{nrefs}

\def\cprime{$'$}
\begin{thebibliography}{10}

\bibitem{bpr:algorithms}
Saugata Basu, Richard Pollack, and Marie-Fran{\c{c}}oise Roy.
\newblock {\em Algorithms in real algebraic geometry}, volume~10 of {\em
  Algorithms and Computation in Mathematics}.
\newblock Springer-Verlag, Berlin, second edition, 2006.

\bibitem{besson}
Etienne Besson.
\newblock Points rationnels de la fonction gamma d'{E}uler.
\newblock {\em Arch. Math. (Basel)}, 103(1):61--73, 2014.

\bibitem{me:mult-sing}
Gal Binyamini.
\newblock Multiplicity estimates, analytic cycles and {N}ewton polytopes.
\newblock {\em {P}reprint}, arXiv:1407.1183, 2014.

\bibitem{me:mult-morse}
Gal Binyamini.
\newblock Multiplicity estimates: a {M}orse-theoretic approach.
\newblock {\em Duke Math. J.}, 165(1):95--128, 2016.

\bibitem{pb}
E.~Bombieri and J.~Pila.
\newblock The number of integral points on arcs and ovals.
\newblock {\em Duke Math. J.}, 59(2):337--357, 1989.

\bibitem{bj:alg}
Gareth~J. Boxall and Gareth~O. Jones.
\newblock Algebraic values of certain analytic functions.
\newblock {\em Int. Math. Res. Not. IMRN}, (4):1141--1158, 2015.

\bibitem{brownawell:null1}
W.~Dale Brownawell.
\newblock Local {D}iophantine {N}ullstellen inequalities.
\newblock {\em J. Amer. Math. Soc.}, 1(2):311--322, 1988.

\bibitem{brownawell:null2}
W.~Dale Brownawell.
\newblock The {H}ilbert {N}ullstellensatz, inequalities for polynomials, and
  algebraic independence.
\newblock In {\em Introduction to algebraic independence theory}, volume 1752
  of {\em Lecture Notes in Math.}, pages 239--248. Springer, Berlin, 2001.

\bibitem{clo:ideals}
David~A. Cox, John Little, and Donal O'Shea.
\newblock {\em Ideals, varieties, and algorithms}.
\newblock Undergraduate Texts in Mathematics. Springer, Cham, fourth edition,
  2015.
\newblock An introduction to computational algebraic geometry and commutative
  algebra.

\bibitem{ds:chow}
John Dalbec and Bernd Sturmfels.
\newblock Introduction to {C}how forms.
\newblock In {\em Invariant methods in discrete and computational geometry
  ({C}ura\c cao, 1994)}, pages 37--58. Kluwer Acad. Publ., Dordrecht, 1995.

\bibitem{eisenbud:ca}
David Eisenbud.
\newblock {\em Commutative algebra}, volume 150 of {\em Graduate Texts in
  Mathematics}.
\newblock Springer-Verlag, New York, 1995.
\newblock With a view toward algebraic geometry.

\bibitem{fs:j-func}
James Freitag and Tom Scanlon.
\newblock Strong minimality and the $j$-function.
\newblock {\em {P}reprint}, arXiv:1402.4588, 2014.

\bibitem{gabrielov:mult}
Andrei Gabrielov.
\newblock Multiplicity of a zero of an analytic function on a trajectory of a
  vector field.
\newblock In {\em The {A}rnoldfest ({T}oronto, {ON}, 1997)}, volume~24 of {\em
  Fields Inst. Commun.}, pages 191--200. Amer. Math. Soc., Providence, RI,
  1999.

\bibitem{gkz}
I.~M. Gel{\cprime}fand, M.~M. Kapranov, and A.~V. Zelevinsky.
\newblock {\em Discriminants, resultants, and multidimensional determinants}.
\newblock Mathematics: Theory \& Applications. Birkh\"auser Boston, Inc.,
  Boston, MA, 1994.

\bibitem{harris:ag}
Joe Harris.
\newblock {\em Algebraic geometry}, volume 133 of {\em Graduate Texts in
  Mathematics}.
\newblock Springer-Verlag, New York, 1995.
\newblock A first course, Corrected reprint of the 1992 original.

\bibitem{iy:jensen}
Yuli{\u\i} Il{\cprime}yashenko and Serge{\u\i} Yakovenko.
\newblock Counting real zeros of analytic functions satisfying linear ordinary
  differential equations.
\newblock {\em J. Differential Equations}, 126(1):87--105, 1996.

\bibitem{jacobson}
Nathan Jacobson.
\newblock {\em Lectures in abstract algebra. {III}}.
\newblock Springer-Verlag, New York-Heidelberg, 1975.
\newblock Theory of fields and Galois theory, Second corrected printing,
  Graduate Texts in Mathematics, No. 32.

\bibitem{jarnik}
Vojt\v{e}ch Jarn{\'{\i}}k.
\newblock \"{U}ber die {G}itterpunkte auf konvexen {K}urven.
\newblock {\em Math. Z.}, 24(1):500--518, 1926.

\bibitem{jt:weierstrass}
Gareth~O. Jones and Margaret Thomas.
\newblock Rational values of weierstrass zeta functions.
\newblock {\em Proceedings of the Edinburgh Mathematical Society (PEMS)}, 2014.
\newblock The article has been accepted for publication in PEMS (Proceedings of
  the Edinburgh Mathematical Society).

\bibitem{jouanolou}
J.~P. Jouanolou.
\newblock {\em \'{E}quations de {P}faff alg\'ebriques}, volume 708 of {\em
  Lecture Notes in Mathematics}.
\newblock Springer, Berlin, 1979.

\bibitem{lang:ag}
Serge Lang.
\newblock {\em Introduction to algebraic geometry}.
\newblock Interscience Publishers, Inc., New York-London, 1958.

\bibitem{laplange:radical}
Santiago Laplagne.
\newblock An algorithm for the computation of the radical of an ideal.
\newblock In {\em I{SSAC} 2006}, pages 191--195. ACM, New York, 2006.

\bibitem{masser:zeta}
D.~Masser.
\newblock Rational values of the {R}iemann zeta function.
\newblock {\em J. Number Theory}, 131(11):2037--2046, 2011.

\bibitem{masser:heights}
David Masser.
\newblock Heights, transcendence, and linear independence on commutative group
  varieties.
\newblock In {\em Diophantine approximation ({C}etraro, 2000)}, volume 1819 of
  {\em Lecture Notes in Math.}, pages 1--51. Springer, Berlin, 2003.

\bibitem{nesterenko:galois}
Ju.~V. Nesterenko.
\newblock The algebraic dependence of the components of solutions of systems of
  linear differential equations.
\newblock {\em Izv. Akad. Nauk SSSR Ser. Mat.}, 38:495--512, 1974.

\bibitem{nesterenko:mult-nonlinear}
Yu.~V. Nesterenko.
\newblock Estimates for the number of zeros of certain functions.
\newblock In {\em New advances in transcendence theory ({D}urham, 1986)}, pages
  263--269. Cambridge Univ. Press, Cambridge, 1988.

\bibitem{nesterenko:ind-measure}
Yu.~V. Nesterenko.
\newblock On the measure of algebraic independence of values of {R}amanujan
  functions.
\newblock {\em Tr. Mat. Inst. Steklova}, 218(Anal. Teor. Chisel i
  Prilozh.):299--334, 1997.

\bibitem{nishioka:mahler}
Keiji Nishioka.
\newblock A conjecture of {M}ahler on automorphic functions.
\newblock {\em Arch. Math. (Basel)}, 53(1):46--51, 1989.

\bibitem{ny:poly-env}
D.~Novikov and S.~Yakovenko.
\newblock A complex analogue of the {R}olle theorem and polynomial envelopes of
  irreducible differential equations in the complex domain.
\newblock {\em J. London Math. Soc. (2)}, 56(2):305--319, 1997.

\bibitem{ny:chains}
D.~Novikov and S.~Yakovenko.
\newblock Trajectories of polynomial vector fields and ascending chains of
  polynomial ideals.
\newblock {\em Ann. Inst. Fourier (Grenoble)}, 49(2):563--609, 1999.

\bibitem{pw}
J.~Pila and A.~J. Wilkie.
\newblock The rational points of a definable set.
\newblock {\em Duke Math. J.}, 133(3):591--616, 2006.

\bibitem{pila:pfaff}
Jonathan Pila.
\newblock The density of rational points on a {P}faff curve.
\newblock {\em Ann. Fac. Sci. Toulouse Math. (6)}, 16(3):635--645, 2007.

\bibitem{pila:modular-ax}
Jonathan Pila.
\newblock Modular {A}x-{L}indemann-{W}eierstrass with derivatives.
\newblock {\em Notre Dame J. Form. Log.}, 54(3-4):553--565, 2013.

\bibitem{schaffer:norms}
Juan~Jorge Sch{\"a}ffer.
\newblock Norms and determinants of linear mappings.
\newblock {\em Math. Z.}, 118:331--339, 1970.

\bibitem{vdp-singer}
Marius van~der Put and Michael~F. Singer.
\newblock {\em Galois theory of linear differential equations}, volume 328 of
  {\em Grundlehren der Mathematischen Wissenschaften [Fundamental Principles of
  Mathematical Sciences]}.
\newblock Springer-Verlag, Berlin, 2003.

\end{thebibliography}

\end{document}